\author{O.~Hryniv and M.~Menshikov} 
\address{Department of Mathematical Sciences, Durham University, Science Laboratories, South Rd., Durham DH1~3LE, UK}
\newcommand{\InputPictureHeight}[2]{\includegraphics[height=#1]{#2}}
\newcommand{\proj}[3]{[\,#1\,]_{#2}^{#3}}%
\newcommand{\bs}[1]{\ensuremath{\boldsymbol{#1}}}%
\newcommand{\DF}{\stackrel{\,{\mathsf{def}}}{=}\,}%
\newcommand{\Ind}[1]{\one_{\{#1\}}}%
\newcommand{\one}{\hbox{\rm 1\kern-.27em I}}%
\def\bl{\bigl}%
\def\br{\bigr}%
\def\Bl{\Bigl}%
\def\Br{\Bigr}%
\def\Exp{\ensuremath{\mathsf{Exp}}}%
\def\calA{\mathcal{A}}%
\def\calB{\mathcal{B}}%
\def\calD{\mathcal{D}}%
\def\calJ{\mathcal{J}}%
\def\calK{\mathcal{K}}%
\def\calQ{\mathcal{Q}}%
\def\calS{\mathcal{S}}%
\def\calW{\mathcal{W}}%
\newcommand{\hatcalW}{\widehat{\mathcal{W}}}%
\def\calY{\mathcal{Y}}%
\def\BbbN{\mathbb{N}}%
\def\BbbR{\mathbb{R}}%
\def\BbbZ{\mathbb{Z}}%
\def\bfB{\mathbf{B}}%
\def\frt{\mathfrak{t}}%
\def\fru{\mathfrak{u}}%
\def\sfP{\mathsf{P}}%
\newcommand{\bsw}{\bs{w}}%
\newcommand{\bsy}{\bs{y}}%
\newcommand{\bshatw}{\bs{\hatw}}%
\newcommand{\bshatpi}{\bs{\hatpi}}%
\newcommand{\bspi}{\bs\pi}%
\newcommand{\bspip}{\bs{\pi_+}}%
\newcommand{\bspim}{\bs{\pi_-}}%
\newcommand{\hatpi}{\widehat\pi}%
\newcommand{\hatpmmp}{\hat p^m_{\ominus\oplus}}%
\newcommand{\bstilw}{\bs{\widetilde w}}%
\def\sfE{\mathsf{E}}%
\newcommand{\hatw}{\widehat w}%
\newcommand{\hatx}{\widehat x}%
\newcommand{\haty}{\widehat y}%
\newcommand{\tiltau}{\tilde\tau}%
\newcommand{\tilkappa}{\tilde\kappa}%
\newcommand{\taust}{\tau^*}%
\newcommand{\tilx}{\tilde x}%
\newcommand{\tily}{\tilde y}%
\newcommand{\tilT}{\widetilde{T}}%
\newcommand{\tilX}{\widetilde{X}}%
\newcommand{\barkappa}{\overline\kappa}%
\newcommand{\bartau}{\overline\tau}%
\newcommand{\bars}{\bar s}%
\newcommand{\barz}{\bar z}%
\newcommand{\barcalS}{\overline{\calS}}%
\newcommand{\Top}{T_\oplus}%
\newcommand{\Topop}{T_{\oplus\oplus}}%
\newcommand{\Tom}{T_\ominus}%
\newcommand{\phiop}{\varphi_\oplus}%
\newcommand{\phiom}{\varphi_\ominus}%
\newtheorem{thm}{Theorem}[section]
\newtheorem{lem}[thm]{Lemma}
\newtheorem{cor}[thm]{Corollary}
\newtheorem{rem}{Remark}[thm]%
\newtheorem{prop}[thm]{Proposition}  
\numberwithin{equation}{section}  
\begin{document}

\title{Long time behaviour in a model of microtubule growth} 

\begin{abstract}
We study a continuous time stochastic process on strings made of two types of particles, whose dynamics mimics the behaviour of microtubules in a living cell; namely, the strings evolve via a competition between (local) growth/shrinking as well as (global) hydrolysis processes. We give a complete characterization of the phase diagram of the model, and derive several criteria of the transient and recurrent regimes for the underlying stochastic process.
\end{abstract}

\keywords{microtubules; transience; recurrence; phase transition; birth-and-death process} 

\subjclass[2000]{60K35, 82B41}

\maketitle
\section{Introduction}

Microtubules are important structural components of the cytoskeleton, which play a vital role in many processes in a living cell. Their unique ability of rapid growth and even more rapid shrinking (often called dynamical instability) is exploited by the nature to segregate chromosomes during cell division, and as such microtubules have been the subject of intensive study. At the same time, the high complexity of the involved processes turns experimental study of microtubules into a challenging task, with many key questions in the area remaining unanswered.

In a recent paper \cite{tAplKsRmMbC08} the authors suggested a simplified stochastic model of microtubule growth aimed at deriving the dynamical instability from the interplay of a small number of parameters (The actual behaviour of microtubules is much more complex, see e.g.\ review \cite{oVnCdJ01} and references there). Mathematically, the model represents microtubules as long polymers made from two types of monomers, $\oplus$ and $\ominus$ (guanosin triphosphate ({\sf GTP${}^+$}) and guanosin diphosphate ({\sf GDP${}^-$}) tubulin complexes), subject to several stochastic transformations occurring with fixed rates, namely: growth, i.e., attachment of $\oplus$~monomers to the active end (with the rate depending on the type of the extremal monomer), hydrolysis, i.e., irreversible transformation of a $\oplus$~monomer into a $\ominus$~monomer (independently of the state of all other monomers composing the microtubule), and depolymerisation/shrinking, i.e., spontaneous departure of the hydrolysed extreme monomer (for a formal definition, see Sect.~\ref{sec:model} below). The authors described analytically the limiting behaviour of the model in several particular cases but had to rely upon numerical simulations in ``the more biologically relevant case of intermediate parameter values''~\cite{tAplKsRmMbC08}.

Our aim here is to describe the phase diagram of this model, in particular, to give several equivalent characterisations of the phase boundary, the set in the parameter space separating the region of the unbounded growth of microtubules from that of the ``compact phase'', where the average microtubule length remains bounded. According to one of our main results (for a complete list and rigorous statements, see Sect.~\ref{sec:results} below), for every point in the parameter space (i.e., a collection of fixed rates) there is a well defined value of velocity of the position of microtubule's active end, and it is the zero-velocity set in the parameter space which separates the regions of unbounded growth (positive velocity) from that of ``compact phase'' (negative velocity).

\subsection{The model}\label{sec:model}
Following \cite{tAplKsRmMbC08}, we think of microtubules $\bs{m}$ as of long polymers consisting of $\oplus$~and $\ominus$~monomers, $\bs{m}=\dots m_{2}m_{1}m_{0}$, where $m_k\in\bl\{\oplus,\ominus\br\}$ for all $k\ge0$, with the ``extreme'' monomer $m_0$ located at the \emph{active} end of the microtubule. Initially all monomers are in the $\ominus$~state, and the time evolution of the microtubule (formally described below) guarantees that with probability one at every moment of time the microtubule contains at most a finite number of $\oplus$~monomers; it is thus convenient to describe the current state of a microtubule at time $t$ in terms of the {\sf position} $x_t$ of the extreme monomer $m_0$ and the {\sf head} (or the {\sf populated zone}, \cite{tAplKsRmMbC08}) $\bsw_t$ of the microtubule, defined as the \emph{shortest} word $m_{k}\dots m_{1}m_{0}$ such that all other monomers $m_{n}$, $n>k$, are in the $\ominus$~state. Since attachment of new monomers occurs at the active end of a microtubule, every non-empty head $\bsw_t$ spans between the active end of the microtubule and its left-most $\oplus$~monomer.

Let $\bl\{\oplus,\ominus\br\}$ be a two-symbol alphabet, and let $\hatcalW=\cup_{k\ge0}\bl\{\oplus,\ominus\br\}^k$ denote the collection of all possible finite words, including the empty one. We call a {\sf head} any word belonging to the set
\[
 \calW=\bl\{\varnothing\br\}\cup\bl\{\bsw=\oplus\bs{\hatw}\text{ with }\bs{\hatw}\in\hatcalW\br\}\,,
\]
so that every non-empty head $\bsw$ can be written as a finite word $w_k\dots w_0$ for some integer $k\ge0$ with its left-most monomer being in the $\oplus$~state, $w_k=\oplus$ (Here and below, if $\bs{\hatw'}=\hatw'_k\dots \hatw'_0$ and $\bs{\hatw''}=\hatw''_l\dots \hatw''_0$ are two finite words in $\hatcalW$, we write $\bs{\hatw'\hatw''}$ for the concatenated word $\hatw'_k\dots\hatw'_0\hatw''_l\dots\hatw''_0$ of $k+l+2$ symbols). It is convenient to decompose the set $\calW$ of all finite heads into a disjoint union
\begin{equation}
\label{eq:calWplusminus-def}
 \calW=\calW_+\cup\calW_-\,,\quad\text{ where }\quad\calW_+=\bl\{\bsw=w_{k}\dots w_0\in\calW:w_0=\oplus\br\}\,.
\end{equation}
In this decomposition the heads $\bsw\in\calW_+$ correspond to microtubules whose active monomer $m_0$ is in the $\oplus$~state, whereas the set of heads $\calW_-$ is associated with those microtubules for which $m_0=\ominus$; in particular, initially we have $m_k\equiv\ominus$ for all $k\ge0$, i.e., the head is empty and thus $\varnothing\in\calW_-$. Of course, every finite word $\bs{\hatw}\in\hatcalW$ corresponds to a unique head $\bsw=\bl<\bs{\hatw}\br>\in\calW$ obtained by removing all its $\ominus$~monomers to the left of the left-most $\oplus$~monomer in $\bs{\hatw}$; it is convenient to think of $\bl<\,\cdot\,\br>:\hatcalW\to\calW$ as a projection operator.

Similarly, for integer $m\ge0$, $\ell\ge0$  let $\proj\cdot{\ell} m:\calW\to\bl\{\oplus,\ominus\br\}^{m+1}$ be the projection operator such that
\begin{equation}
\label{eq:finite-projection}
\bsw=w_{k}\dots w_0\quad\mapsto\quad\bshatw\equiv\proj\bsw {\ell}m=\hatw_{m}\dots\hatw_0\,,
\end{equation}
where $\hatw_j=w_{\ell+j}$ for $j\in\{0,\dots,m\}$, and we assume that the word $\bshatw$ is extended with $\ominus$~monomers on the left if necessary, i.e., $\hatw_j=\ominus$ if $\ell+j>k$ for $j$ under consideration. If $\ell=0$, we shall use a simplified notation $\proj\bsw{}m$ for the word consisting of the $m+1$ right-most monomers in~$\bsw$, again extended on the left as necessary.

Our main object here is a continuous-time Markov process 
\begin{equation}
\label{eq:yt-def}
 \bsy_t\equiv(x_t,\bsw_t)\,, \quad t\ge0\,,
\end{equation}
taking values in $\calY\equiv\BbbZ\times\calW$, where $x_t\in\BbbZ$ is the position at time $t\ge0$ of the right-most monomer $w_0$ of the head $\bsw_t\in\calW$. We shall assume that initially the microtubule consists of an empty head located at the origin,
\begin{equation}
\label{eq:init-condn}
 \bsy_0=(x_0,\bsw_0)=(0,\varnothing)\,,
\end{equation}
and that the transitions of $\bsy_t$ are described in terms of fixed positive constants $\lambda^+$, $\lambda^-$ and $\mu$ as follows:
\begin{description}
\item[Attachment:] a $\oplus$~monomer attaches to the right end of the microtubule,
\[
 (x,\bsw)\mapsto\bl(x+1,\bsw\oplus\br)\,,
\]
at rate $\lambda^+$ if $\bsw\in\calW_+$ and rate $\lambda^-$ if $\bsw\in\calW_-$; of course, if $\bsw=\varnothing\in\calW_-$, the head $\bsw\oplus$ should be understood as $\bl<\bsw\oplus\br>\equiv\oplus$.

\item[Detachment:] if $\bsw\in\calW_-$, i.e., $\bsw=\varnothing$ resp.\ $\bsw=\bs{w'}\ominus$ with some $\bs{w'}\in\calW\setminus\bl\{\varnothing\br\}$, the microtubule shrinks at rate $\mu$, 
\[
\bl(x,\varnothing\br)\mapsto\bl(x-1,\varnothing\br) \quad\text{~resp.~}\quad\bl(x,\bs{w'}\ominus\br)\mapsto\bl(x-1,\bs{w'}\br)\,.
\]

\item[Conversion:] for a non-empty head $\bsw=w_k\dots w_0\in\calW$, let $J_{\bsw}$ denote the list of positions of all $\oplus$~monomers in $\bsw$, $J_{\bsw}=\{j\ge0:w_j=\oplus\}$; then every $w_j$ with $j\in J_{\bsw}$ hydrolyses, $w_j=\oplus\mapsto\ominus$, at rate~$1$, independently of all other $w_j$, $j\in J_{\bsw}$. In other words, if $\bs{\hatw}$ is any word obtained from $\bsw$ by converting one of its $\oplus$~monomers into the $\ominus$~state, then at rate~$1$,
\[
 (x,\bsw)\mapsto\bl(x,\left<\bs{\hatw}\right>\br)\,,
\]
where transformations into different resulting words $\bs{\hatw}$ are independent. Notice that if the left-most $\oplus$~monomer $w_k$ in $\bsw$ hydrolyses, the resulting word $\bs{\hatw}$ starts with $\ominus$, so that in this case the new head $\left<\bs{\hatw}\right>$ is shorter than $\bsw$ and might even become empty.
\end{description}

In our analysis of the main microtubule process $(\bsy_t)_{t\ge0}$ we shall rely upon two auxiliary processes approximating $\bsy_t$.

Let $0=\tiltau_0<\tiltau_1<\dots$ be the moments of consecutive returns of the Markov process $(\bsy_t)_{t\ge0}$ to states with empty head,
\begin{equation}
\label{eq:tily-def}
\bs\tily_\ell\equiv\bsy_{\tiltau_\ell}=\bl(\tilx_\ell,\varnothing\br)\,,\qquad\ell\ge0\,.
\end{equation}
Clearly, the discrete time Markov chain $\bl(\bs\tily_\ell\br)_{\ell\ge0}$ can be identified with the process $(\bs\tilx_\ell)_{\ell\ge0}$, where $\bs\tilx_0=0$. Put $\theta_\ell=\tiltau_\ell-\tiltau_{\ell-1}$, $\ell>0$. As the differences
\[
\bl(\tilx_\ell-\tilx_{\ell-1},\theta_\ell\br)\equiv\bl(x_{\tiltau_\ell}-x_{\tiltau_{\ell-1}},\tiltau_\ell-\tiltau_{\ell-1}\br)
\]
are mutually independent and have the same distribution, the process $(\tilx_\ell)_{\ell\ge0}$ is a discrete time random walk on $\BbbZ$ with i.i.d.\ increments.

Our second auxiliary process is a ``finite-state version'' of the process $(\bsy_t)_{t\ge0}$. For a fixed integer $m\ge0$, let $[\,\cdot\,]^m:\calW\to\bl\{\oplus,\ominus\br\}^{m+1}$ be the projection operator defined above. We then put
\begin{equation}
\label{eq:haty-def}
 \bs\haty_t\equiv\bl(\hatx^m_t,\bs\hatw^m_t\br)\equiv\bs\haty^{\,m}_t\DF\bl[\bsy_t\br]^m=\bl(x_t,\bl[\bsw_t\br]^m\br)\,,
\end{equation}
and equip the process $\bs\haty_t$ with jumps (and rates) inherited from the process $\bsy_t$; then the conversion move for $\bs\haty_t$ is the same as for $\bsy_t$, whereas the attachment move should be understood as
\[
 (x,\bs{\hatw})\mapsto\bl(x+1,\left[\bs{\hatw}\oplus\right]^m\br)\,,
\]
and the detachment move becomes
\[
\bl(x,\varnothing\br)\mapsto\bl(x-1,\varnothing\br) \quad\text{~or~}\quad\bl(x,\bs{\hatw}\ominus\br)\mapsto\bl(x-1,\left[\ominus\bs{\hatw}\right]^m\br)\,,
\]
if $\bs{\hatw}$ contains at least one $\oplus$~monomer (in the ``finite-state'' situation here and below, $\varnothing$ denotes the word of length $m+1$ made of $\ominus$ monomers only). As a result, for every fixed $m\ge0$ the process $(\bs\haty_t)_{t\ge0}\equiv(\bs\haty^{\,m}_t)_{t\ge0}$ is a continuous-time Markov chain on a ``finite'' strip $\BbbZ\times\bl\{\oplus,\ominus\br\}^{m+1}$. The transience and recurrence properties of such chains are similar to those of discrete-time chains on strips, see e.g.  \cite[Chap.~3]{gFvaMmvM95}.

\subsection{Results}\label{sec:results}

We now are ready to state our main results.

\begin{thm}\label{thm:main-renewal}\sl
 The random vectors
\begin{equation}
\label{eq:renewal-increments}
 \bl(\Delta_l\tilx,\Delta_l\tiltau\br)\equiv\bl(\tilx_l-\tilx_{l-1}, \tiltau_l-\tiltau_{l-1}\br)\,,\qquad l\ge1\,,
\end{equation}
share a common distribution with finite exponential moments in a neighbourhood of the origin. Consequently, the discrete time random walk $(\tilx_l)_{l\ge0}$ in~$\BbbZ$, generated by the i.i.d.\ steps $\Delta_l\tilx$, satisfies all classical limiting results including the (Strong) Law of Large Numbers, the Central Limit Theorem and the Large Deviation Principle.
\end{thm}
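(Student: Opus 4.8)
The plan is to reduce the entire statement to a single tail estimate for an auxiliary birth-and-death process. The i.i.d.\ part is essentially automatic: each $\tiltau_\ell$ is a hitting time of the ``atom'' $\{\bsw=\varnothing\}$, and since the dynamics of $\bsy_t=(x_t,\bsw_t)$ are invariant under spatial translation (all rates depend on $\bsw$ only), the strong Markov property shows that the evolution after $\tiltau_\ell$ depends on the state $(\tilx_\ell,\varnothing)$ only through the translation by $\tilx_\ell$, which does not affect the increments $\bl(\Delta_l\tilx,\Delta_l\tiltau\br)$; hence these vectors are i.i.d. The key object I would introduce is the number of $\oplus$-monomers in the head, $K_t=|J_{\bsw_t}|$. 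From the definition of the head, $\bsw_t=\varnothing$ precisely when $K_t=0$; moreover, attachment raises $K_t$ by $1$ at rate $\lambda^+$ or $\lambda^-$ (hence always in $[\lambda_{\min},\lambda_{\max}]$ with $\lambda_{\min}=\min\bl(\lambda^+,\lambda^-\br)>0$), each of the $K_t$ conversions lowers $K_t$ by $1$ at rate $1$, and detachment leaves $K_t$ unchanged. Thus, uniformly in the rest of the state, $(K_t)$ is stochastically dominated --- by the standard monotone coupling for birth-and-death dynamics --- by the occupation process $\widetilde K_t$ of an $M/M/\infty$ queue with arrival rate $\lambda_{\max}$ and unit service rate. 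Since the $M/M/\infty$ busy period $\widetilde B$ is a.s.\ finite, so is each excursion of $\bsy_t$ away from $\{\bsw=\varnothing\}$, which already yields $\tiltau_\ell<\infty$ a.s.

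Next I would split $\Delta_1\tiltau=\tiltau_1-\tiltau_0$ (and likewise every $\Delta_l\tiltau$, by the renewal structure) into the sojourn at $\{\bsw=\varnothing\}$ plus the ensuing excursion $\{\bsw_t\ne\varnothing\}=\{K_t\ge1\}$. During the sojourn the walk makes a $\mathrm{Geom}\bl(\lambda^-/(\lambda^-+\mu)\br)$-distributed number $G$ of $(-1)$-steps before the initiating attachment, and the sojourn has an $\Exp(\lambda^-)$ length; both are light-tailed and, by the strong Markov property at the initiating attachment, independent of the excursion. The excursion lasts at most $\widetilde B$ by the domination, and throughout it attachments and detachments occur at total rate at most $\lambda_{\max}+\mu$, so the number of such moves in the excursion is dominated by a Poisson variable of (random) parameter $(\lambda_{\max}+\mu)\widetilde B$. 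After coupling this gives
\[
\Delta_1\tiltau\ \le\ \Exp(\lambda^-)+\widetilde B\,,\qquad
|\Delta_1\tilx|\ \le\ 1+G+\mathrm{Poisson}\bl((\lambda_{\max}+\mu)\widetilde B\br)\,,
\]
and conditioning on $\widetilde B$ turns the right-hand sides into explicit light-tailed functionals of $\widetilde B$. So everything comes down to the single estimate $\sfE_1\bl[e^{s\widetilde B}\br]<\infty$ for all small $s>0$.

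That estimate is the only genuinely nontrivial point, and I would establish it by the usual exponential Foster--Lyapunov argument: with $V(n)=e^{cn}$ for $c>0$ small, the generator of $\widetilde K$ satisfies
\[
\mathcal{L}V(n)\ =\ e^{cn}\bl[\lambda_{\max}(e^{c}-1)-n(1-e^{-c})\br]\ \le\ -\gamma\,V(n)\qquad(n\ge n_0)
\]
for suitable $n_0<\infty$ and $\gamma>0$, while $\mathcal{L}V(n)$ stays bounded on the finite set $\{1,\dots,n_0-1\}$; since from that set $\widetilde K$ reaches $0$ with probability bounded away from zero in a time with an exponential tail, the standard drift criteria give $\sfE_1\bl[e^{s\widetilde B}\br]<\infty$ for all sufficiently small $s>0$. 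Feeding this back into the two displayed bounds yields $\sfE\bl[e^{s_1\Delta_1\tilx+s_2\Delta_1\tiltau}\br]<\infty$ for $(s_1,s_2)$ in a neighbourhood of the origin, which is the asserted exponential-moment property. The ``Consequently'' part is then immediate: an i.i.d.\ sequence whose common law has a finite exponential moment has, in particular, finite mean and variance, so $\tilx_n=\sum_{l\le n}\Delta_l\tilx$ obeys the SLLN and the CLT, and finiteness of the moment generating function near $0$ delivers the full LDP via Cram\'er's theorem. I expect the main obstacle to be exactly the light-tail control of $\widetilde B$ (equivalently, of the excursion length of $\bsy_t$ away from empty heads); the remainder is coupling and elementary bookkeeping.
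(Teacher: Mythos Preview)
Your core reduction is exactly the paper's: dominate the number of $\oplus$~monomers $K_t=\|\bsw_t\|$ by the occupation process of an $M/M/\infty$ queue with arrival rate $\lambda_{\max}=\max(\lambda^+,\lambda^-)$ and unit service rate, and then control the cycle via the busy period of that queue. The paper constructs this coupling explicitly in Sect.~\ref{sec:single-cycle} and then, instead of Foster--Lyapunov, proves the joint exponential moment $\sfE_1\bl[z^{\kappa_0}e^{s\tau_0}\br]<\infty$ for the birth--death process by a hands-on decomposition (App.~\ref{sec:regularity-BandD-processes}): split trajectories at level $M$ with $M\mu>\lambda$, handle the finite piece in $\{1,\dots,M\}$ by a finite-state argument and the excursions above $M$ by comparison with a negative-drift random walk. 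Your drift-criterion route to the same conclusion is perfectly valid and arguably quicker; the paper's version has the minor advantage of yielding control of $\kappa_0$ (the number of jumps) simultaneously, which it then uses to bound $|\tilx_1|$ via $\sharp\le(\kappa_0+1)/2$ rather than your Poisson-over-$\widetilde B$ estimate.

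One genuine slip to flag: your description of the ``sojourn at $\varnothing$'' is wrong. From $(0,\varnothing)$ the very first detachment already lands the process in $(-1,\varnothing)$, which is an empty-head state, so the cycle ends there---there is no geometric run of $(-1)$-steps inside a single cycle, and the sojourn length is $\Exp(\lambda^-+\mu)$, not $\Exp(\lambda^-)$. This does not damage your argument, since your displayed upper bounds remain valid (your quantities stochastically dominate the correct ones), but the decomposition you describe is not the actual cycle structure; compare the first-step identity \eqref{eq:Phi0-fist-step-decomposition} in the paper. A smaller point: the Poisson domination for $|\Delta_1\tilx|$ needs the detachment clock to be realised as an independent rate-$\mu$ Poisson process so that, conditional on $\widetilde B$, you really get a $\mathrm{Poisson}(\mu\widetilde B)$ bound on detachments, while the attachments are already bounded by the births of the $M/M/\infty$ during its busy period; as written the independence is implicit but should be made explicit in a full proof.
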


Since the increments of the sequence $(\tiltau_l)_{l\ge0}$ have exponential moments, the embedded random walk $(\tilx_l)_{l\ge0}$ captures the long-time behaviour of the main process $(\bsy_t)_{t\ge0}$. In what follows we shall often say that the process $(\bsy_t)_{t\ge0}$ is transient towards~$+\infty$ (resp. $-\infty$) if the random walk $(\tilx_l)_{l\ge0}$ has the corresponding property.

\begin{cor}\label{cor:velocity}\sl
 The velocity $v$ of the process $(x_t)_{t\ge0}$, defined as the almost sure limit
\[
 v\DF\lim_{t\to\infty}\frac{x_t}t\,,
\]
satisfies $v=\sfE\tilx_1/\sfE\tiltau_1$. In particular, $\sfE\tilx_1>0$ corresponds to transience of $x_t$ towards $+\infty$ and $\sfE\tilx_1<0$ corresponds to transience of $x_t$ towards $-\infty$.
\end{cor}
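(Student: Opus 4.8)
The plan is to extract everything from the i.i.d.\ renewal structure supplied by \Thm{main-renewal}. Since the increments $\Delta_l\tiltau=\tiltau_l-\tiltau_{l-1}$ are strictly positive with finite exponential moments, we have $0<\sfE\tiltau_1<\infty$ and $\sfE|\tilx_1|<\infty$, and $\tiltau_l\to\infty$ almost surely, so the renewal counting variable $N_t\DF\max\{l\ge0:\tiltau_l\le t\}$ is a.s.\ finite and $N_t\to\infty$ as $t\to\infty$. Recalling that $x_{\tiltau_l}=\tilx_l$, I would then write, for $t\ge\tiltau_1$,
\[
 \frac{x_t}{t}=\frac{\tilx_{N_t}}{t}+\frac{x_t-x_{\tiltau_{N_t}}}{t}
\]
and handle the two terms separately.

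For the first term, the Strong Law of Large Numbers applied to the i.i.d.\ sequences $(\Delta_l\tilx)$ and $(\Delta_l\tiltau)$ gives $\tilx_l/l\to\sfE\tilx_1$ and $\tiltau_l/l\to\sfE\tiltau_1$ a.s. Feeding the latter into the sandwich $\tiltau_{N_t}\le t<\tiltau_{N_t+1}$ (divide through by $N_t$ and let $t\to\infty$) yields the elementary renewal relation $N_t/t\to1/\sfE\tiltau_1$ a.s.; hence $\tilx_{N_t}/t=(\tilx_{N_t}/N_t)\cdot(N_t/t)\to\sfE\tilx_1/\sfE\tiltau_1$ a.s.

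The remaining, and only non-routine, point is to show that the last incomplete excursion is negligible, i.e.\ $(x_t-x_{\tiltau_{N_t}})/t\to0$ a.s. I would bound $|x_t-x_{\tiltau_{N_t}}|$ by the maximal displacement $M_{N_t+1}$ of the active end over that excursion, where $M_l\DF\sup_{\tiltau_{l-1}\le s\le\tiltau_l}|x_s-x_{\tiltau_{l-1}}|$. Each move of the coordinate $x$ is an attachment (rate $\lambda^+$ or $\lambda^-$) or a detachment (rate $\mu$), so the total rate of $x$-moving events is bounded by $c_0\DF(\lambda^+\vee\lambda^-)+\mu$ no matter what the current head is; consequently, conditionally on the excursion length $\theta_l=\Delta_l\tiltau$, the number of such events (and a fortiori $M_l$, since each event shifts $x$ by $\pm1$) is stochastically dominated by a Poisson variable of mean $c_0\theta_l$. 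As $\theta_l$ has exponential moments, so does $M_l$; in particular the i.i.d.\ variables $M_l$ have finite mean, from which $\max_{l\le n}M_l=o(n)$ a.s.\ (via the SLLN for $(M_l)$, or directly from the Borel--Cantelli lemma and the exponential tail of $M_l$). Since $N_t/t$ converges, $N_t+1\le Ct$ for all large $t$, whence $|x_t-x_{\tiltau_{N_t}}|/t\le(\max_{l\le Ct+1}M_l)/t\to0$ a.s. Combining the two terms gives $x_t/t\to\sfE\tilx_1/\sfE\tiltau_1$ a.s., i.e.\ $v=\sfE\tilx_1/\sfE\tiltau_1$. Lastly, if $\sfE\tilx_1>0$ then $\tilx_l/l\to\sfE\tilx_1>0$ already forces $\tilx_l\to+\infty$ a.s., i.e.\ transience of $(\tilx_l)$ --- and therefore, by our convention, of $(x_t)$ --- towards $+\infty$, and symmetrically for $\sfE\tilx_1<0$. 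The one place requiring care is the stochastic-domination bound for $M_l$: one must justify the Poisson comparison despite the instantaneous $x$-moving rate being state-dependent, which works precisely because that rate is uniformly bounded by $c_0$; everything else is routine renewal theory.
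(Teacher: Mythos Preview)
Your argument is correct and is exactly the renewal-reward SLLN that the paper invokes (it dispatches the corollary in one line by citing \cite[Sect.~5.2]{rD99}); the decomposition $x_t/t=\tilx_{N_t}/t+(x_t-x_{\tiltau_{N_t}})/t$ and the handling of the first term are precisely what that reference unpacks.

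The one step you flagged does need tightening: the \emph{conditional} claim ``given $\theta_l$, the number of $x$-moves is stochastically dominated by Poisson$(c_0\theta_l)$'' is not valid as stated, since $\theta_l$ is itself determined by those moves (for a toy illustration, if $\theta$ were the time of the first $x$-move then conditionally there is exactly one move, which is not dominated by a Poisson$(c_0\theta)$ variable). The fix is easy: either argue unconditionally via $\sfP(M_l>k)\le\sfP(\text{Poisson}(c_0 T)>k)+\sfP(\theta_l>T)$ with $T=k/(2c_0)$ and use the exponential tail of $\theta_l$, or --- as the paper does --- simply bound $\max_{t\in[0,\tiltau_1)}|x_t-x_0|\le\barkappa_1+1$, the total jump count in a cycle, whose exponential moments come from the birth-and-death coupling (Corollary~\ref{cor:barkappa0-exp-moments}). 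Either route yields $\sfE M_l<\infty$ and completes your proof.
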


\begin{rem}\sl
In a more realistic from the biological point of view case of $x_t$ restricted to the half-line $\BbbZ^+\equiv\bl\{0,1,2,\dots\br\}$, the condition $\sfE\tilx_1>0$ corresponds to unbounded growth (with speed $v>0$), whereas the condition $\sfE\tilx_1<0$ corresponds to the ``compact phase'' of positive recurrence.
\end{rem}

\begin{rem}\sl
Existence of exponential moments for the vectors \eqref{eq:renewal-increments} allows for a fast numerical estimation of $\sfE{\tilx_1}$, and thus provides a constructive way of describing the phase boundary for the Markov process $(\bsy_t)_{t\ge0}$.
\end{rem}

We now give another characterisation of the transient regime towards $+\infty$.

\begin{thm}\label{thm:finite-head}\sl
If $\lambda^-\ge\mu+\lambda^+$, the Markov process $(\bsy_t)_{t\ge0}$ is transient towards~$+\infty$.
Alternatively, if $\lambda^-<\mu+\lambda^+$, the process $(\bsy_t)_{t\ge0}$ is transient towards~$+\infty$ if and only if for some $m\ge0$ the $m$-projected process $(\bs\haty^{\,m}_t)_{t\ge0}$ is transient towards~$+\infty$.
\end{thm}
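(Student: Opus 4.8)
The plan is to prove the two assertions in turn. For the first, \Thm{main-renewal} and \Cor{velocity} reduce matters to showing $\sfE\tilx_1>0$ (recall $\tilx_0=0$), and I would obtain this by splitting the first renewal step according to the initial jump out of the empty-head state $(\tilx_0,\varnothing)$: with probability $\mu/(\mu+\lambda^-)$ it is a detachment, contributing $-1$ and landing again on an empty-head state; with probability $\lambda^-/(\mu+\lambda^-)$ it is an attachment, which creates a head and opens a non-empty-head excursion that eventually returns to the empty-head states. The crucial structural fact I would use is that \emph{every non-empty-head excursion has non-negative net displacement of $x$}: the detachment rule $\bl(x,\bs{w'}\ominus\br)\mapsto\bl(x-1,\bs{w'}\br)$ keeps $\bs{w'}\ne\varnothing$, so a non-empty head never empties through detachment and each such excursion must end with a conversion of the then unique left-most $\oplus$; counting monomers over the excursion gives $\#\{\text{attachments}\}+1=\#\{\text{detachments}\}+\#\{\text{monomers removed by left-most-}\oplus\text{ conversions}\}$, while attachments and detachments move $x$ by $\pm1$ and conversions leave $x$ fixed, so the displacement equals $\#\{\text{monomers removed by left-most-}\oplus\text{ conversions}\}-1\ge0$. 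This yields $\sfE\tilx_1\ge(\lambda^--\mu)/(\mu+\lambda^-)$, positive as soon as $\lambda^->\mu$ --- in particular under $\lambda^-\ge\mu+\lambda^+$ --- and \Cor{velocity} completes the first part (indeed already under the weaker hypothesis $\lambda^->\mu$).

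For the equivalence I would first note that each $m$-projected chain $(\bs\haty^m_t)$ on the strip $\BbbZ\times\{\oplus,\ominus\}^{m+1}$ carries its own renewal structure at returns to the all-$\ominus$ states, with embedded walk $(\hatx^m_\ell)_{\ell\ge0}$ whose increments have exponential moments \emph{uniformly in $m$} (all governing rates $\lambda^\pm,\mu,1$ being independent of $m$), and that, as for $(\bsy_t)$ (cf.\ \Cor{velocity} and \cite[Ch.~3]{gFvaMmvM95}), $(\bs\haty^m_t)$ is transient towards $+\infty$ iff $\sfE[\hatx^m_1-\hatx^m_0]>0$. For ``$\Rightarrow$'' I would couple $(\bsy_t)$ and $(\bs\haty^m_t)$ from a common all-$\ominus$ start by coupled Poisson clocks, with shared conversion clocks for the visible $\oplus$-monomers; a short check shows the two agree while the head of $(\bsy_t)$ has length $\le m+1$, and that within a renewal excursion they separate only if that head first exceeds length $m+1$ and a later detachment uncovers a $\oplus$ at position $m+1$. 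The maximal head-length in an excursion is at most the number of attachments there, hence has exponential moments, so the probability of such a separation in one excursion tends to $0$ as $m\to\infty$; combined with the uniform integrability above this gives $\sfE[\hatx^m_1-\hatx^m_0]\to\sfE\tilx_1$, so $\sfE\tilx_1>0$ forces $(\bs\haty^m_t)$ to be transient towards $+\infty$ for all large $m$. (This direction uses nothing about the sign of $\mu+\lambda^+-\lambda^-$; that hypothesis merely keeps the first part of the theorem as the case where the equivalence is vacuous.)

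For ``$\Leftarrow$'' I would use a standard transience certificate for $(\bs\haty^m_t)$, which may be taken of the form $f_m(x,s)=\min\bl(\rho_m^x\,h_m(s),\,1\br)$ with $\rho_m\in(0,1)$ and a positive vector $h_m$ on $\{\oplus,\ominus\}^{m+1}$ (the root $\rho_m<1$ and the Perron-type eigenvector of the $\rho_m$-twisted phase operator), and lift it by $g(x,\bsw)\DF f_m\bl(x,\proj\bsw{}m\br)$. The key computation is that, since converting a $\oplus$ at any position $>m$ leaves $\proj\bsw{}m$ unchanged (the monomers strictly between the left-most $\oplus$ and the next $\oplus$ are already $\ominus$), the generator $\mathcal{L}$ of $(\bsy_t)$ acts on $g$ by $(\mathcal{L}g)(x,\bsw)=(\mathcal{L}_m f_m)\bl(x,\proj\bsw{}m\br)+\mu\,\Ind{w_0=\ominus,\ |\bsw|\ge m+2,\ w_{m+1}=\oplus}\bl[f_m(x-1,\oplus w_m\dots w_1)-f_m(x-1,\ominus w_m\dots w_1)\br]$, the extra term being exactly the detachment ``defect'' produced when the length-$(m{+}1)$ window forgets a $\oplus$ just outside it. Since $\min(\rho_m^x\,\cdot\,,1)$ is non-decreasing, this defect is $\le0$ as soon as $h_m(\oplus\bs u)\le h_m(\ominus\bs u)$ for all $\bs u\in\{\oplus,\ominus\}^m$, and then $\mathcal{L}g\le0$ for $x$ large enough, so the inherited boundary behaviour of $g$ makes it a transience certificate for $(\bsy_t)$ towards $+\infty$. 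For $m=0$ this holds: there $h_0(\ominus)=\bl(1+\lambda^+(1-\rho_0)\br)h_0(\oplus)>h_0(\oplus)$, the factor coming from the rate-$1$ conversion $\oplus\mapsto\ominus$ and $\rho_0<1$.

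The step I expect to be the real obstacle is the inequality $h_m(\oplus\bs u)\le h_m(\ominus\bs u)$ for general $m$ --- i.e.\ that the $\rho_m$-twisted phase operator on $\{\oplus,\ominus\}^{m+1}$ has a Perron eigenvector monotone under the order that replaces the left-most letter $\oplus$ by $\ominus$. Proving this seems to need a genuinely new ingredient, a monotone coupling of the phase dynamics, and it is the natural place for the hypothesis $\lambda^-<\mu+\lambda^+$ --- equivalently, that an $\oplus$ at the active end gives a strictly larger local velocity than an $\ominus$ there --- to be used; were the monotonicity to hold unconditionally, the hypothesis would be needed only to keep the first part of the theorem as the trivial case. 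A fallback route to ``$\Leftarrow$'', again via a monotone coupling but now between two projections, would be to show that $\sfE[\hatx^m_1-\hatx^m_0]$ is non-decreasing in $m$, so that its positivity for a single $m$ propagates to the limit $\sfE\tilx_1$.
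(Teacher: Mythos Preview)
Your argument for the first assertion is correct and in fact sharper than the paper's: the monomer-counting observation that every non-empty-head excursion terminates with $\tilx_1\ge1$ gives $\sfE\tilx_1\ge(\lambda^--\mu)/(\lambda^-+\mu)$, hence transience towards $+\infty$ already under $\lambda^->\mu$. The paper argues differently, via an ergodic-theorem velocity formula
\[
v=\bspip\,\lambda^++\bspim\,(\lambda^--\mu)\,,\qquad v_m=\bshatpi^m_+\,\lambda^++\bshatpi^m_-\,(\lambda^--\mu)\,,
\]
and notes that when $\lambda^--\mu\ge\lambda^+$ both local velocities are at least $\lambda^+>0$, so $v\ge\lambda^+$. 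Your route buys a wider range; the paper's buys the formula that drives everything else.

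For the equivalence, the paper's approach sidesteps the obstacle you flag. Rewriting $v_m=(\lambda^--\mu)+\bshatpi^m_+\bigl(\lambda^+-(\lambda^--\mu)\bigr)$, one sees that under the hypothesis $\lambda^-<\mu+\lambda^+$ the velocity $v_m$ is a strictly increasing affine function of $\bshatpi^m_+$. The paper then proves, via a monotone coupling of the head processes $(\bshatw^m_t)$ and $(\bshatw^{m+1}_t)$ (and of $(\bshatw^m_t)$ with $(\bsw_t)$), that $\bshatpi^m_+<\bshatpi^{m+1}_+$ for every $m$ and $\bshatpi^m_+\to\bspip$. Together these give $v_m\uparrow v$, so $v_m>0$ for one $m$ forces $v>0$, and $v>0$ forces $v_m>0$ for all large $m$. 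The hypothesis enters precisely here---it fixes the sign that makes $v_m$ monotone in $\bshatpi^m_+$---while the coupling itself is unconditional.

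Your ``$\Leftarrow$'' has a real gap. The Lyapunov lift needs the eigenvector inequality $h_m(\oplus\bs u)\le h_m(\ominus\bs u)$, which you leave open; and your fallback---monotonicity of $\sfE[\hatx^m_1-\hatx^m_0]$ via a monotone coupling of two projections---is the right instinct but not yet a proof: the head ordering $\bsw'_t\prec\bsw''_t$ that such a coupling produces does \emph{not} yield a pathwise inequality $x'_t\le x''_t$ (in the paper's coupling with $\lambda^-\ge\lambda^+$, for instance, an attachment at rate $\lambda^--\lambda^+$ can hit the smaller head alone). What the head ordering \emph{does} give is $\{\hatw^m_0(t)=\oplus\}\subseteq\{\hatw^{m+1}_0(t)=\oplus\}$, whence $\bshatpi^m_+\le\bshatpi^{m+1}_+$ by ergodicity; turning this into velocity monotonicity is exactly the job of the velocity formula above (and of the sign hypothesis). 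So the missing ingredient in your plan is that formula; once you have it, the fallback completes cleanly along the paper's lines, and the Lyapunov detour is unnecessary.
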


\begin{rem}\sl
The transience and recurrence properties of the ``finite-strip'' process $(\bs\haty^{\,m}_t)_{t\ge0}$ can be easily described through the solution $\bshatpi^m$ to a finite system of linear equations, see \eqref{eq:finite-m-chain} below. This, together with the fact that the Markov process $(\bsy_t)_{t\ge0}$ is well approximated by $(\bs\haty^{\,m}_t)_{t\ge0}$ for $m$ large enough, for precise results see Section~\ref{sec:finite-chains}, provides a constructive way of describing the $+\infty$~transient regime of~$(\bsy_t)_{t\ge0}$.
\end{rem}

We now give an alternative description of the ``compact phase''. Let the Markov process $(\bsy_t)_{t\ge0}$ start from the empty-head initial condition \eqref{eq:init-condn}. Assuming that the first event results in arrival of a $\oplus$~monomer at position~$1$, we define its arrival time via $\zeta_{01}\DF\min\bl\{t>0:x_t=1\br\}$, and, on the event when $\zeta_{01}$ is finite, the departure time $\zeta_{10}$ of this monomer, $\zeta_{10}\DF\min\bl\{t>\zeta_{01}:x_t=0\br\}$. Then the difference
\begin{equation}
\label{eq:T1-def}
 T_1\DF\zeta_{10}-\zeta_{01}>0
\end{equation}
describes the lifetime of the $\oplus$~monomer at position~$1$. Notice, that the lifetime of \emph{every} monomer attached to the microtubule after the initial time $t=0$ does not depend on the configuration of the microtubule at the moment of arrival and therefore has the same distribution as~$T_1$.

Of course, $T_1$ is almost surely finite, and we formally put $T_1=+\infty$ if either of the times $\zeta_{01}$, $\zeta_{10}$ is infinite. At the same time, we obviously have that $\sfE T_1>0$.

In what follows we shall see that it is the finiteness of $\sfE{T_1}$, which is central to describing the ``compact phase''; moreover, if $\sfE{T_1}<\infty$, then the Laplace transform $\varphi(s)\DF\sfE e^{-sT_1}$ of $T_1$ is finite for some $s<0$, equivalently, $T_1$ has finite exponential moments in a neighbourhood of the origin:

\begin{thm}\label{thm:lifetime-vs-transience}\sl
The Markov process $(\bsy_t)_{t\ge0}$ is transient towards $-\infty$ if and only if $\sfE{T_1}<\infty$, equivalently, $\varphi(s)$ is finite in a neighbourhood of the origin.
\end{thm}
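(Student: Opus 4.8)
The plan is to connect the lifetime $T_1$ of a single $\oplus$~monomer to the renewal structure of $(\bsy_t)_{t\ge0}$ introduced before \Thm{main-renewal}, namely the excursions between consecutive visits to the empty head. The key observation is that, by the Markov property and the remark following \eqref{eq:T1-def}, every $\oplus$~monomer attached after time $0$ has a lifetime distributed as $T_1$ and, crucially, these lifetimes are \emph{conditionally independent} given the attachment events. During a single excursion $[\tiltau_{\ell-1},\tiltau_\ell)$, the head is non-empty and the left-most $\oplus$~monomer must hydrolyse and then be removed by detachment before the head can empty; more precisely, the excursion length $\theta_\ell$ and the displacement $\Delta_\ell\tilx$ can both be bounded (above and below, respectively) in terms of the lifetimes of the $\oplus$~monomers present during the excursion. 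So I would first make the dictionary precise: the process is transient towards $-\infty$ iff the embedded walk $(\tilx_\ell)$ drifts to $-\infty$, iff $\sfE\Delta_1\tilx<0$ (using \Cor{velocity} and the finiteness of $\sfE\tiltau_1$ from \Thm{main-renewal}); and I want to show this sign condition is equivalent to $\sfE T_1<\infty$.

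Second, I would handle the direction ``$\sfE T_1<\infty \Rightarrow$ transience towards $-\infty$''. If $\sfE T_1<\infty$, then with probability one the initial $\oplus$~monomer at position $1$ (in the excursion started by its arrival) is eventually removed, after which the displacement over that excursion is exactly $0$ (one $+1$ attachment, one $-1$ detachment) plus whatever happened strictly inside — but one must be careful: further $\oplus$~monomers may have attached on top, extending the excursion. Here the right device is the following monotone coupling / domination argument already implicit in the model: the position $x_t$ decreases by $1$ at each detachment and increases by $1$ at each attachment, and attachments only occur while the head is non-empty, i.e.\ while some $\oplus$~monomer (with finite-mean lifetime) is still ``alive''. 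A clean way is to compare with the process $(\bs\haty^{0}_t)$ or to show directly that finiteness of $\sfE T_1$ forces $\sfE\theta_1<\infty$ and $\sfE\Delta_1\tilx<0$: one sets up a branching-type recursion in which each live $\oplus$~monomer spawns further attachments at rate $\lambda^+$ during its lifetime, and uses $\sfE T_1<\infty$ together with a subcriticality input to sum the series. Then $\sfE\Delta_1\tilx<0$ gives transience towards $-\infty$ by \Cor{velocity}.

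Third, for the converse ``transience towards $-\infty \Rightarrow \sfE T_1<\infty$'', I would argue contrapositively: if $\sfE T_1=\infty$ (which, by the renewal/regeneration structure, is the only alternative to transience towards $+\infty$ in the regime where the process is not transient towards $+\infty$ — here one invokes \Thm{finite-head} and \Thm{main-renewal} to rule out null-recurrence-type pathologies), then the excursion length has $\sfE\theta_1=\infty$ and the displacement satisfies $\sfE\Delta_1\tilx\ge 0$, so the walk cannot be transient towards $-\infty$. The link $\theta_1 \ge T_1$ (on the event that the first event is an attachment at position $1$, the first excursion lasts at least the lifetime of that monomer) makes $\sfE T_1=\infty\Rightarrow\sfE\theta_1=\infty$ immediate, but $\sfE\theta_1=\infty$ alone is not quite enough — I also need the displacement sign, which again comes from the attachment/detachment bookkeeping: a long excursion is long precisely because many $\oplus$~monomers are being produced and they each eventually contribute a cancelling $+1/-1$ pair, so the net drift is non-negative. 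Finally, the equivalence with finiteness of $\varphi(s)=\sfE e^{-sT_1}$ for some $s<0$ follows from the tail estimate in \Thm{main-renewal}: the lifetime $T_1$ is dominated by (a fixed number of) excursion lengths $\theta_\ell$, which have exponential moments, so $\sfE T_1<\infty$ self-improves to an exponential bound on $T_1$ near the origin.

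The main obstacle I anticipate is the converse direction, specifically extracting the sign $\sfE\Delta_1\tilx\ge 0$ from $\sfE T_1=\infty$: bounding the displacement over an excursion requires tracking how many $\oplus$~monomers are simultaneously alive (the ``head length'' dynamics), and controlling the correlations between successive attachment events and their lifetimes. The natural route is the branching/domination picture where each live $\oplus$ monomer independently generates a Poisson$(\lambda^+ T_1)$ number of offspring attachments, turning the head dynamics into a (continuous-time, age-dependent) branching process whose (sub)criticality is governed exactly by $\sfE T_1$ versus $1/\lambda^+$; making this correspondence rigorous — in particular justifying the conditional independence of lifetimes and handling the boundary effects at the empty head — is where the real work lies.
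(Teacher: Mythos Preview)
There is a genuine gap stemming from a confusion between two different excursions. The renewal times $\tiltau_\ell$ are returns of the \emph{head} to $\varnothing$, whereas $T_1$ is the time for the \emph{position} $x_t$ to return from $1$ to $0$. These are not comparable in the way you suggest: the head can become empty (ending a $\theta$-excursion) while $x_t$ is still at position $1$ or higher, so the inequality ``$\theta_1\ge T_1$'' is false, and indeed your implication ``$\sfE T_1=\infty\Rightarrow\sfE\theta_1=\infty$'' directly contradicts \Thm{main-renewal}, which says $\theta_1$ always has exponential moments regardless of the sign of $v$. Conversely, $T_1$ may span many head-excursions, so it is \emph{not} dominated by a fixed number of $\theta_\ell$'s; this breaks your self-improvement step from $\sfE T_1<\infty$ to exponential moments. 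The branching picture is also off: attachments occur at rate $\lambda^+$ only when the \emph{rightmost} monomer is $\oplus$, not once per living $\oplus$~monomer, so there is no clean Galton--Watson structure indexed by $\sfE T_1$ versus $1/\lambda^+$.

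The paper's route avoids all of this by working directly with the position process. For $\sfE T_1<\infty\Rightarrow v<0$: the successive hitting times $S_k$ of level $-k$ form a renewal sequence with increments distributed as $\Tom$ (the lifetime of an extreme $\ominus$~monomer), so by the SLLN $x_{S_k}/S_k\to-1/\sfE\Tom$; since $\sfE\Tom$ and $\sfE\Top\equiv\sfE T_1$ are finite simultaneously (a one-line consequence of the first-step decomposition for $\phiom$), this gives $v=-1/\sfE\Tom<0$. For $v<0\Rightarrow$ exponential moments of $T_1$: one shows a tail bound $\sfP(\Top>Kn)\le Ae^{-an}$ directly, by observing that on $\{\Top>Kn\}$ the embedded walk $\tilx$ (started from the first empty-head time after $0$) must stay nonnegative up to index $\sim n$, and then applying the LDP for $(\tilx_\ell)$ from \Thm{main-renewal} together with the exponential control on the initial displacement $x_{\taust_0}$ from \Cor{barkappa0-exp-moments}. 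No branching or sign-of-drift bookkeeping over head-excursions is needed.
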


The average lifetime $\sfE T_1$ can be computed from $\varphi(s)$ in the usual way, 
\[
 \sfE T_1\equiv-\lim_{s\downarrow0}\frac{d}{ds}\varphi(s)\,,
\]
and the latter has the following property.

\begin{lem}\label{lem:Laplace-of-lifetime}\sl
The Laplace transform $\varphi(s)\equiv\sfE e^{-sT_1}$ of the lifetime $T_1$ satisfies the following functional equation: for all $s\ge0$,
\begin{equation}
\label{eq:phi-eqn}
\varphi(s)
=\frac{\bl(1+\lambda^+\varphi(s)\br)\bl(\mu+\lambda^-\varphi(s)\br)}{(1+\lambda^++s)(\mu+\lambda^-+s)}
+\frac{\lambda^+\bl((\mu+s)\varphi(s)-\mu\br)}{(1+\lambda^++s)(\mu+\lambda^-+s)}\varphi(s+1)\,.
\end{equation}
\end{lem}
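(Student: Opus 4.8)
The plan is to obtain \eqref{eq:phi-eqn} by a first-passage analysis of the process $(\bsy_t)$ started from the configuration $(1,\oplus)$ that is in force at time $\zeta_{01}$, relying on two facts that are immediate from the description of the dynamics. First, as recorded in the paragraph just before the statement, every monomer attached after time $0$ has lifetime distributed as $T_1$; moreover, since all transition rates of $\bsy_t$ depend on the head only through its extreme monomer $w_0$, the evolution of the microtubule restricted to positions above a given level — in particular the duration of any excursion of $x_t$ above that level — does not depend on the states of the monomers at or below it. Second, the monomer $M$ initially at position $1$ is, for as long as it stays in state $\oplus$, one of the $\oplus$-monomers of the head, so by the independence of conversions it hydrolyses according to a rate-$1$ Poisson clock that is independent of everything happening strictly above position $1$.

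I would then run the process from $(1,\oplus)$ and read off the contributions to $\varphi(s)=\sfE e^{-sT_1}$. The holding time at $(1,\oplus)$ is $\Exp(1+\lambda^+)$, and the first event is either the hydrolysis of $M$ (rate $1$), after which the configuration at position $1$ is the empty head, or an attachment (rate $\lambda^+$), which places a fresh $\oplus$-monomer at position $2$ and so entails a full $T_1$-excursion before $x_t$ can return to position $1$. By the first fact this excursion has some length $D$ with $\sfE e^{-sD}=\varphi(s)$; by the second, given $D$ the monomer $M$ is still $\oplus$ at the end of it with probability $e^{-D}$, so the event ``excursion, and $M$ survives it'' contributes the factor $\sfE e^{-(s+1)D}=\varphi(s+1)$ while ``excursion, and $M$ has hydrolysed in the meantime'' contributes $\varphi(s)-\varphi(s+1)$. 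This is the mechanism producing the denominator $(1+\lambda^++s)$ together with the $\varphi(s+1)$ in \eqref{eq:phi-eqn}. Once $M$ has hydrolysed the configuration at position $1$ is the empty head, with holding time $\Exp(\mu+\lambda^-)$: detachment (rate $\mu$) then achieves $x_t=0$, while attachment (rate $\lambda^-$) spawns a further $T_1$-excursion that returns to the empty head — which is what accounts for the denominator $(\mu+\lambda^-+s)$ and for the numerator pieces $\mu+\lambda^-\varphi(s)$ and $(\mu+s)\varphi(s)-\mu$.

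To make this an equation I would introduce the auxiliary transform $\psi(s)\DF\sfE e^{-sT_1'}$, where $T_1'$ is the first-passage time of $x_t$ to $0$ started from the empty-head configuration at position $1$, write the coupled pair of relations coming from the one-step analysis of $(1,\oplus)$ and of $(1,\varnothing)$ (the only $\varphi(s+1)$ terms being those that track whether $M$ survives the first excursion, and the products of two transforms coming from composing two $T_1$-excursions), solve the $(1,\varnothing)$-relation for $\psi$ in terms of $\varphi$, substitute, and collect everything over the common denominator $(1+\lambda^++s)(\mu+\lambda^-+s)$. Throughout $\varphi$ is read with the convention $e^{-s\cdot\infty}=0$, which is what keeps the identity meaningful in the regime where $(\bsy_t)$ is transient towards $+\infty$ (there $T_1=\infty$ with positive probability and some excursions never terminate); the computation is performed for $s>0$, where every transform in sight is an expectation of a bounded variable, and the case $s=0$ then follows by letting $s\downarrow0$.

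The step I expect to be the main obstacle is the renewal bookkeeping around the surviving monomer $M$: one has to justify, via the strong Markov property and the lack of memory of $M$'s hydrolysis clock, that at the end of the first excursion above position $1$ the process restarts from $(1,\oplus)$ with a fresh $\Exp(1)$ residual hydrolysis time for $M$ on the event that $M$ is still in state $\oplus$, and from the empty-head configuration on the complementary event, and that the excursion length $D$ is genuinely independent of $M$'s clock — these being precisely the independence inputs behind the passage from ``excursion'' to the factor $\varphi(s+1)$. Once they are in place, deriving \eqref{eq:phi-eqn} is routine algebra.
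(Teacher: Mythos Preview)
Your proposal is correct and follows essentially the same route as the paper: the paper introduces the Laplace transforms $\phiop(s)=\sfE e^{-s\Top}$ and $\phiom(s)=\sfE e^{-s\Tom}$ (your $\varphi$ and $\psi$), derives the coupled pair of first-step relations from $(0,\oplus)$ and $(0,\varnothing)$ using exactly your ``$M$ survives the excursion with conditional probability $e^{-\Top''}$'' device to produce the $\varphi(s+1)$ term, and then eliminates $\phiom$ to obtain \eqref{eq:phi-eqn}. The renewal/independence justification you flag as the main obstacle is taken for granted in the paper as an immediate consequence of the strong Markov property and the memorylessness of the hydrolysis clocks.
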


\begin{rem}\sl
In addition to being potentially useful for numerical evaluation of $\varphi(s)$, Lemma~\ref{lem:Laplace-of-lifetime} can be used to studying properties of various lifetimes, see Sect.~\ref{sec:lifetime-properties} below.
\end{rem}

The rest of the paper is devoted to the proofs of the above results. In Sect.~\ref{sec:renewal-structure} we shall use the intrinsic renewal structure of $(\bsy_t)_{t\ge0}$ and the regularity property of birth and death processes from Sect.~\ref{sec:regularity-BandD-processes} to derive Theorem~\ref{thm:main-renewal} and Corollary~\ref{cor:velocity}. Then, in Sect.~\ref{sec:finite-chains} we shall investigate stochastic monotonicity properties of the processes $(\bs\haty^m_t)_{t\ge0}$ and use them to verify the finite-strip approximation result, Theorem~\ref{thm:finite-head}. Finally, in Sect.~\ref{sec:lifetime-properties}, we prove Lemma~\ref{lem:Laplace-of-lifetime}, Theorem~\ref{thm:lifetime-vs-transience}, and establish a similar characterisation of the transience towards~$+\infty$, based upon the Large Deviation estimate from Sect.~\ref{sec:long-jump-density}.

\section{Renewal structure}\label{sec:renewal-structure}

In this section we shall exploit the intrinsic renewal property of the Markov process $(\bsy_t)_{t\ge0}$ related to the consecutive moments $\tiltau_l$, $l\ge0$, when its head becomes empty, $\bsw_{\tiltau_l}=\varnothing$. By the strong Markov property, for every fixed $l>0$, the process $\bsy_{\tiltau_l+t}\equiv\bl(x_{\tiltau_l+t},\bsw_{\tiltau_l+t}\br)$, $t\ge0$, has the same law as $(\bsy_t)_{t\ge0}$ if started from the initial state $\bl(x_{\tiltau_l},\varnothing\br)$; in addition, this law does not depend on the behaviour of $(\bsy_t)_{t\in[0,\tiltau_l)}$. It is thus sufficient to study $(\bsy_t)_{t\ge0}$ over a single cycle interval $[0,\tiltau_1)$.

\subsection{A single cycle behaviour}\label{sec:single-cycle}

Fix arbitrary positive rates $\lambda^+$, $\lambda^-$, and $\mu$, and denote $\lambda=\max(\lambda^+,\lambda^-)>0$. Our aim here is to relate the Markov process $(\bsy_t)_{t\ge0}$, whose dynamics is governed by the rates $\lambda^+$, $\lambda^-$, and $\mu$, to the continuous-time birth-and-death process $\bl(Y_t\br)_{t\ge0}$ with birth rate $\lambda$ and death rate~$1$ (per individual), see App.~\ref{sec:regularity-BandD-processes}. In fact, we shall couple them in such a way that the total number of $\oplus$~monomers in $\bsw_t\equiv\bsw(t)$,
\begin{equation}
\label{eq:head-norm-def}
 \|\bsw_t\|\DF\sum_{j\ge0}\Ind{w_{j}(t)=\oplus}
\end{equation}
is bounded above by the total number of individuals $Y_t$ in the birth-and-death process; in particular, the moment $\tiltau_1$ of the first disappearance of the head of the process~$\bsy_t$ shall occur no later than the first return of~$Y_t$ to the origin.

\subsubsection*{Formal construction}

We couple the Markov process $(\bsy_t)_{t\ge0}$ with rates $(\lambda^+,\lambda^-,\mu,1)$ described above and the birth-and-death process $\bl(Y_t\br)_{t\ge0}$ with death rate~$1$ and  birth rate $\lambda=\max(\lambda^+,\lambda^-)$, see App.~\ref{sec:regularity-BandD-processes}, in such a way that the inequality $0\le\|\bsw_t\|\le Y_t$ is preserved for all times $t\ge0$.

To start, we shall assume that $\lambda^-\ge\lambda^+$ so that $\lambda=\lambda^-$ (the necessary changes needed in the case $\lambda^-\le\lambda^+$ shall be commented on below), and then denote
\begin{equation}
\label{eq:lambda-rates-coupling}
\lambda_0\DF\min(\lambda^-,\lambda^+)>0\,,\qquad\delta\lambda\DF\bl|\lambda^+-\lambda^-\br|\ge0\,.
\end{equation}
Two cases need to be considered separately.

\bigskip\noindent{\sf Case~I}: Let $\bsy_t=(x_t,\bsw_t)$ with $\bsw_t\in\calW_-$, and $Y_t=n+\|\bsw_t\|$ with $n\ge0$. We define four independent exponentially distributed random variables $\zeta_1\sim\Exp(\lambda)$, $\zeta_2\sim\Exp(\mu)$, $\zeta_3\sim\Exp\bl(\|\bsw_t\|\bl)$, $\zeta_4\sim\Exp(n)$ and put $\zeta\equiv\min\bl(\zeta_1,\zeta_2,\zeta_3,\zeta_4\br)$. Then the first transition occurs at time $t+\zeta$ and is given by
\begin{itemize}
 \item if $\zeta=\zeta_1$, then $Y_{t+\zeta}=Y_t+1$ and a $\oplus$~monomer attaches to the microtubule, i.e.,  $\bsy_{t+\zeta}=(x_t+1,\bs{w'})$ with $\bs{w'}=\bsw_t\oplus$;
 \item if $\zeta=\zeta_2$, then $Y_{t+\zeta}=Y_t$ and the extremal $\ominus$~monomer $w_0(t)$ leaves the microtubule (i.e., for $\bsw_t=\varnothing$ we have $\bsy_{t+\zeta}=(x_t-1,\varnothing)$, and in the case $\bsw_t\in\calW_-\setminus\{\varnothing\}$ we have  $\bsy_{t+\zeta}=(x_t-1,\bs{w'})$ with $w'_{j}=w_{j+1}(t)$ for all $j\ge0$);
 \item if $\zeta=\zeta_3$, then $Y_{t+\zeta}=Y_t-1$ and one $\oplus$~monomer in $\bsw_t$ hydrolyses (uniformly at random);
 \item if $\zeta=\zeta_4$, then $Y_{t+\zeta}=Y_t-1$ and $\bsy_{t+\zeta}=\bsy_t$.
\end{itemize}

By using the well-known properties of exponential random variables, it is immediate to verify that jumps of both processes $(\bsy_t)_{t\ge0}$ and $(Y_t)_{t\ge0}$ have correct distributions. We also notice that the last transition above can only happen if $n>0$; as a result, the key inequality $\|\bsw_{t+\zeta}\|\le Y_{t+\zeta}$ is preserved after the jump.

\bigskip\noindent{\sf Case~II}: Let $\bsy_t=(x_t,\bsw_t)$ with $\bsw_t\in\calW_+$ and let $Y_t=n+\|\bsw_t\|$ with $n\ge0$. We now consider four independent exponential random variables $\zeta_1\sim\Exp(\lambda_0)$, $\zeta_2\sim\Exp(\delta\lambda)$, $\zeta_3\sim\Exp\bl(\|\bsw_t\|\bl)$, $\zeta_4\sim\Exp(n)$ (recall~\eqref{eq:lambda-rates-coupling}), and put $\zeta\equiv\min\bl(\zeta_1,\zeta_2,\zeta_3,\zeta_4\br)$. Then the first transition occurs at time $t+\zeta$ and is given by
\begin{itemize}
 \item if $\zeta=\zeta_1$, then $Y_{t+\zeta}=Y_t+1$ and a $\oplus$~monomer attaches to the microtubule, i.e.,  $\bsy_{t+\zeta}=(x_t+1,\bs{w'})$ with $\bs{w'}=\bsw_t\oplus$;
 \item if $\zeta=\zeta_2$, then $Y_{t+\zeta}=Y_t+1$ and $\bsy_{t+\zeta}=\bsy_t$;
 \item if $\zeta=\zeta_3$, then $Y_{t+\zeta}=Y_t-1$ and one $\oplus$~monomer in $\bsw_t$ hydrolyses (uniformly at random);
 \item if $\zeta=\zeta_4$, then $Y_{t+\zeta}=Y_t-1$ and $\bsy_{t+\zeta}=\bsy_t$.
\end{itemize}
Again, it is straightforward to check that the transitions above provide a correct coupling.

By using an appropriate case at every step, we construct a correct coupling of two processes $Y_t$ and $\bsy_t$ for all $t\ge0$ in the region $\lambda^-\ge\lambda^+$. The construction for $\lambda^-<\lambda^+$ is similar with the only difference that the simultaneous moves with rate $\lambda=\lambda^+$, i.e.,
$\left|\,\cdots\oplus\right>\,\mapsto\,\left|\,\cdots\oplus\oplus\right>$ and $Y_{t+\zeta}=Y_t+1$, occur when $w_0(t)=\oplus$ ({\sf Case~II}), whereas a pair of moves with rates $\lambda_0$ and $\delta\lambda$ (recall lines $\zeta=\zeta_1$ and $\zeta=\zeta_2$ in {\sf Case~II} above) occurs when $w_0(t)=\ominus$, i.e., in {\sf Case~I}.

Observe that in the coupling described above, every jump of the microtubule process $(\bsy_t)_{t\ge0}$ involving $\oplus$~monomers (attachment or hydrolysis) corresponds to an appropriate move (up or down) in the birth-and-death process~$\bl(Y_t\br)_{t\ge0}$. We shall use this coupling below to study the microtubule process~$(\bsy_t)_{t\ge0}$.

\subsection{Proof of Theorem~\ref{thm:main-renewal} and Corollary~\ref{cor:velocity}}\label{sec:proof-main-renewal}

To prove Theorem~\ref{thm:main-renewal}, fix positive jump rates $\lambda^+$, $\lambda^-$ and $\mu$ as above and consider the Markov process $(\bsy_t)_{t\ge0}$ starting from the initial condition~\eqref{eq:init-condn}, $\bsy_0=(0,\varnothing)$. Recall that $\tiltau_1>0$ is the first moment of time when the process $\bsy_t$ enters a state with empty head, $\bsy_{\tiltau_1}=(\tilx_1,\varnothing)$. Our aim is to show that the expectation
\[
 \Phi_0(z,s)\DF\sfE_{\bsy_0}\bl[\,z^{\tilx_1}e^{s\tiltau_1}\,\br]
\]
is finite for some $z>1$ and $s>0$. By using the Markov property at the end of the initial holding time $\eta_1\sim\Exp(\lambda^-+\mu)$, we deduce the relation
\begin{equation}
\label{eq:Phi0-fist-step-decomposition}
\Phi_0(z,s)=\frac{\lambda^-+\mu}{\lambda^-+\mu-s}\Bl[\frac{\mu}{\lambda^-+\mu}\,z^{-1}+\frac{\lambda^-}{\lambda^-+\mu}\,z\,\Phi_1(z,s)\Br]\,,
\end{equation}
where $\Phi_1(z,s)$ is defined as $\Phi_0(z,s)$ but with the initial condition $\bs{y'}=(1,\oplus)$, i.e., with the head $\bsw$ consisting of a single $\oplus$~monomer at position~$x=1$. It thus suffices to show that $\Phi_1(z,s)$ is finite for some $z>1$ and $s>0$.

To this end, we shall use the construction from Sect.~\ref{sec:single-cycle} to couple the microtubule process~$(\bsy_t)_{t\ge\eta_1}$ and the birth-and-death process $(Y_t)_{t\ge\eta_1}$ with birth rate $\lambda=\max(\lambda^+,\lambda^-)$, death rate~$1$ and the initial condition $Y_{\eta_1}=1$. Let $\bartau_1$ be the hitting time and let $\barkappa_1$ be the total number of jumps until the process $(\bsy_t)_{t\ge\eta_1}$ starting at $\bsy_{\eta_1}=(1,\oplus)$ hits an empty head state $(\tilx_1,\varnothing)$. Similarly, write $\tau_0$ for the hitting time and $\kappa_0$ for the total number of jumps until the birth-and-death process $Y_t$ hits the origin. Finiteness of $\Phi_1(z,s)$ shall follow from monotonicity of this coupling and the results of~App.~\ref{sec:regularity-BandD-processes}.

Let $\sharp$ be the total number of $\oplus$~monomers attached to the microtubule during the time interval~$[0,\tiltau_1)$. Since by the time $\tiltau_1$ all these $\oplus$~monomers have hydrolysed and some of the resulting $\ominus$~monomers might have detached from the microtubule, we obviously have $2\sharp\le\kappa_0+1$ and therefore
\[
\barkappa_1\le3\sharp\le\frac32(\kappa_0+1)\,,\qquad -1\le\tilx_1\le\sharp\le\frac{\kappa_0+1}2\,.
\]
It now follows from the inequality $\bartau_1\le\tau_0$ and Proposition~\ref{prop:regularity-B&D-process} that
\[
 \Phi_1(z,s)\equiv\sfE_1\Bl[\,z^{\tilx_1}\,e^{s\bartau_1}\,\Br]\le\sqrt{z}\,\sfE\Bl[\,z^{\kappa_0/2}\,e^{s\tau_0}\,\Br]<\infty\,,
\]
provided $\sqrt{z}\le\barz$ and $s\le\bars$, for some $\barz>1$ and $\bars>0$. This estimate together with the decomposition \eqref{eq:Phi0-fist-step-decomposition} implies the first claim of Theorem~\ref{thm:main-renewal}. The other results for the random walk $\tilx_n$ now follow in a standard way (\cite{wF50,aDoZ98}).

\medskip
Notice that the argument above also proves the following result.

\begin{cor}\label{cor:barkappa0-exp-moments}\sl
Let $\bartau_1$ be the hitting time and let $\barkappa_1$ be the total number of jumps until the process $(\bsy_t)_{t\ge\eta_1}$ with initial state $\bsy_{\eta_1}=(1,\oplus)$ hits an empty head state $(\tilx_1,\varnothing)$. Then there exist $\barz>1$ and $\bars>0$ such that $\sfE_1\bl[\,z^{\barkappa_1}\,e^{s\bartau_1}\,\br]<\infty$ everywhere in the region $z\le\barz$ and $s\le\bars$.
\end{cor}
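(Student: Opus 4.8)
The plan is to observe that Corollary~\ref{cor:barkappa0-exp-moments} is a by-product of the estimate already carried out in the proof of Theorem~\ref{thm:main-renewal}, so only a minor repackaging is needed. Recall that in that proof we couple $(\bsy_t)_{t\ge\eta_1}$ started from $(1,\oplus)$ with the birth-and-death process $(Y_t)_{t\ge\eta_1}$ of birth rate $\lambda=\max(\lambda^+,\lambda^-)$ and death rate~$1$ started from $Y_{\eta_1}=1$, in such a way that $0\le\|\bsw_t\|\le Y_t$ for all $t\ge\eta_1$, and every jump of $(\bsy_t)$ that attaches or hydrolyses a $\oplus$~monomer is matched by a step of $(Y_t)$. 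First I would record the two deterministic bounds already derived, namely $\bartau_1\le\tau_0$ (the head empties no later than $Y_t$ hits~$0$, since $\|\bsw_t\|\le Y_t$) and $\barkappa_1\le3\sharp\le\tfrac32(\kappa_0+1)$, where $\sharp$ is the number of $\oplus$~monomers attached during the cycle and $\kappa_0$ is the number of jumps of $(Y_t)$ before absorption at the origin; the factor~$3$ accounts for at most one attachment, one hydrolysis and one subsequent detachment per $\oplus$~monomer, and $2\sharp\le\kappa_0+1$ holds because each attached monomer also contributes a hydrolysis step on the $Y$-side.

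Given these pathwise inequalities, I would then write
\[
 \sfE_1\bl[\,z^{\barkappa_1}\,e^{s\bartau_1}\,\br]
 \le z^{3/2}\,\sfE\bl[\,z^{(3/2)\kappa_0}\,e^{s\tau_0}\,\br]
\]
for every $z\ge1$ and $s\ge0$, and invoke Proposition~\ref{prop:regularity-B&D-process}: there exist constants $z_0>1$ and $s_0>0$ such that $\sfE\bl[\,w^{\kappa_0}\,e^{s\tau_0}\,\br]<\infty$ whenever $w\le z_0$ and $s\le s_0$. Taking $w=z^{3/2}$, this gives finiteness of $\sfE_1\bl[\,z^{\barkappa_1}\,e^{s\bartau_1}\,\br]$ for all $z\le\barz\DF z_0^{2/3}>1$ and $s\le\bars\DF s_0$, which is precisely the assertion; monotonicity of the exponential-type moment in both arguments then covers the whole rectangle $\{z\le\barz,\ s\le\bars\}$.

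There is really no substantial obstacle here: the only thing to be careful about is making sure the coupling from Section~\ref{sec:single-cycle} applies verbatim in both sub-cases $\lambda^-\ge\lambda^+$ and $\lambda^-<\lambda^+$, and that the combinatorial bound $\barkappa_1\le3\sharp$ is genuinely pathwise (no $\oplus$~monomer is attached more than once, hydrolysed more than once, or detached more than once within a single cycle), so that the resulting constants $\barz,\bars$ do not depend on the starting configuration. The appearance of $z^{3/2}$ rather than $\sqrt{z}$ compared with the $\tilx_1$-estimate is immaterial, since only the existence of some $\barz>1$ is claimed. Finally, one should note that the whole argument is uniform over the initial state $(1,\oplus)$, which is exactly what is needed for the renewal decomposition to close.
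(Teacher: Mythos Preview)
Your proposal is correct and mirrors the paper's own argument exactly: the paper simply states that the corollary follows from ``the argument above'', namely the coupling of Sect.~\ref{sec:single-cycle} together with the pathwise bounds $\bartau_1\le\tau_0$ and $\barkappa_1\le3\sharp\le\tfrac32(\kappa_0+1)$, and then an application of Proposition~\ref{prop:regularity-B&D-process}. Your repackaging with $w=z^{3/2}$ and $\barz=z_0^{2/3}$ is precisely the missing line the paper leaves implicit.
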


Of course, Corollary~\ref{cor:velocity}, the Strong Law of Large Numbers for the renewal scheme with increments \eqref{eq:renewal-increments}, follows immediately from Theorem~\ref{thm:main-renewal} (see, e.g., \cite[Sect.~5.2]{rD99}). In addition, the estimate $\max\limits_{t\in[0,\tiltau_1)}|x_t-x_0|\le\barkappa_1+1$ and the corollary above imply the corresponding concentration result, namely sharp exponential estimates for the probabilities of the events $\bl|\frac{\tilx_t}t-\frac{\sfE\tilx_1}{\sfE\tiltau_1}\br|>\varepsilon$ with small fixed $\varepsilon>0$ and large~$t$.

\section{Finite approximations}\label{sec:finite-chains}

By Theorem~\ref{thm:main-renewal}, the process $\bsw_t$ is an irreducible continuous time positive recurrent Markov chain in $\calW$. If $\bs\pi$ is its unique stationary distribution, denote
\[
 \bspip\equiv\bspi\bl(\calW_+\br)=\sum_{\bsw\in\calW_+}\bspi(\bsw)\,,\qquad
 \bspim\equiv\bspi\bl(\calW_-\br)=\sum_{\bsw\in\calW_-}\bspi(\bsw)\,,
\]
i.e., $\bspip$ (resp.~$\bspim$) is the probability that the right-most monomer in $\bsw$ is a $\oplus$~monomer (resp., a $\ominus$~monomer), recall~\eqref{eq:calWplusminus-def}.

Similarly, for every fixed $m\ge0$ the projected chain $\bshatw_t\equiv[\bsw_t]^m$ has a unique stationary distribution $\bshatpi^m$, for which we define
\[
 \bshatpi^m_+\equiv\sum_{\bsw\in\{\oplus,\ominus\}^m}\bshatpi^m(\bsw\oplus)\,,\qquad
 \bshatpi^m_-\equiv\sum_{\bsw\in\{\oplus,\ominus\}^m}\bshatpi^m(\bsw\ominus)\,.
\]
We then have the following result.

\begin{prop}\label{prop:velocity-ergodic-thm}\sl
 Denote ${v_+}\equiv\lambda^+>0$ and ${v_-}\equiv\lambda^--\mu$. Then, almost surely,
\begin{equation}
\label{eq:velocity-ergodic-thm}
 \lim_{t\to\infty}\frac1tx_t=\bspip\,{v_+}+\bspim\,{v_-}\,,\qquad
 \lim_{t\to\infty}\frac1t\hatx^m_t=\bshatpi^m_+\,{v_+}+\bshatpi^m_-\,{v_-}\,.
\end{equation}

As a result, if ${v_-}\ge {v_+}$, then the process $x_t$ is transient towards $+\infty$ and for every $m\ge0$ the process $\hatx^m_t$ is transient towards $+\infty$.
On the other hand, if ${v_-}<{v_+}$, then $x_t$ is transient towards $+\infty$ iff for all sufficiently large $m\ge0$ the process $\hatx^m_t$ is transient towards $+\infty$.
\end{prop}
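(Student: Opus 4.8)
The plan is to treat the three assertions separately, the first two being soft and the third resting on a comparison between the truncated chains and the full one. First I would establish the ergodic identities \eqref{eq:velocity-ergodic-thm} and dispose of the easy regime. By \Thm{main-renewal} the head chain $\bsw_t$ is positive recurrent and irreducible with stationary law $\bspi$, while $\bshatw^m_t=[\bsw_t]^m$ is an irreducible continuous-time chain on the \emph{finite} set $\{\oplus,\ominus\}^{m+1}$, hence automatically positive recurrent with stationary law $\bshatpi^m$. Setting $b(\bsw)=v_+$ for $\bsw\in\calW_+$ and $b(\bsw)=v_-$ for $\bsw\in\calW_-$, the jump rules of Section~\ref{sec:model} give $x_t-x_0=M_t+\int_0^t b(\bsw_s)\,ds$ with $M_t$ a martingale whose jumps are $\pm1$ and occur at rate at most $\lambda+\mu$; thus $\langle M\rangle_t=O(t)$ and $M_t/t\to0$ a.s., while the ergodic theorem gives $\frac1t\int_0^t b(\bsw_s)\,ds\to\sum_{\bsw}\bspi(\bsw)b(\bsw)=\bspip v_++\bspim v_-$. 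This is the first limit in \eqref{eq:velocity-ergodic-thm}, and the same computation applied to $\bshatw^m_t$ yields the second. In the regime $v_-\ge v_+$ one has $b(\bsw)\ge v_+>0$ for \emph{every} $\bsw$, so both limits are $\ge v_+>0$ irrespective of the stationary laws; hence $x_t\to+\infty$ and $\hatx^m_t\to+\infty$ a.s.\ and both processes are transient towards $+\infty$.

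Assume now $v_-<v_+$. By \eqref{eq:velocity-ergodic-thm} and the standard trichotomy for chains on a strip (positive, zero, negative mean horizontal drift corresponding respectively to transience to $+\infty$, recurrence, transience to $-\infty$; the zero-drift recurrence is the classical point, \cite[Chap.~3]{gFvaMmvM95}), the process $\hatx^m_t$ is transient towards $+\infty$ iff $\bshatpi^m_+v_++\bshatpi^m_-v_->0$, i.e.\ (as $v_+-v_->0$) iff $\bshatpi^m_+>c$ with $c\DF-v_-/(v_+-v_-)$; likewise, by \Cor{velocity}, $x_t$ is transient towards $+\infty$ iff $\bspip v_++\bspim v_->0$, i.e.\ iff $\bspip>c$. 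So the assertion reduces to the two comparison facts
\[
 \text{(i)}\quad\bshatpi^m_+\le\bspip\ \text{ for all }m\ge0;\qquad\text{(ii)}\quad\bshatpi^m_+\longrightarrow\bspip\ \text{ as }m\to\infty .
\]
Granting these, the equivalence is immediate: if $x_t$ is transient towards $+\infty$ then $\bspip>c$, and by (ii) $\bshatpi^m_+>c$ for all large $m$, so each such $\hatx^m_t$ is transient towards $+\infty$; conversely, if $\hatx^m_t$ is transient towards $+\infty$ for all large $m$ then $\bshatpi^m_+>c$ for some such $m$, whence $\bspip\ge\bshatpi^m_+>c$ by (i) and $x_t$ is transient towards $+\infty$.

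It remains to prove (i) and (ii). For (ii) I would run $\bsw_t$ and $\bshatw^m_t$ on the same probability space, started from $\varnothing$ with shared clocks: as long as the head of $\bsw_t$ has length $\le m+1$ the identity $\bshatw^m_t=[\bsw_t]^m$ (and hence $\hatx^m_t=x_t$) is preserved, and it can only be broken by a detachment occurring while that head is longer than $m+1$. Thus on the event $B_m$ that within one cycle $[0,\tiltau_1)$ the head of $\bsw_t$ never exceeds length $m+1$, the two chains agree, and in particular share the first return time $\tiltau^m_1\equiv\tiltau_1$ to the empty head and the same occupation time of $\calW_+$. Since the head length only increases at attachments, of which there are at most $\sharp\le(\kappa_0+1)/2$ per cycle, $\sfP(B_m^c)\le\sfP(\kappa_0\ge2m+1)$ decays exponentially in $m$ by the exponential-moment bounds of Section~\ref{sec:renewal-structure}. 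Writing the stationary probabilities as renewal--reward ratios, $\bspip=\sfE\bl[\int_0^{\tiltau_1}\Ind{\bsw_s\in\calW_+}\,ds\br]/\sfE\tiltau_1$ and the analogous expression for $\bshatpi^m_+$ over the cycle $[0,\tiltau^m_1)$ of $\bshatw^m_t$, and using that each $\bshatw^m_t$ (like $\bsw_t$) is dominated by a birth-and-death process as in Section~\ref{sec:single-cycle}, so that $\sup_m\sfE(\tiltau^m_1)^2<\infty$ while $\sfE\tiltau_1$ is bounded below, one controls the $B_m^c$-contributions by Cauchy--Schwarz and obtains $\bshatpi^m_+\to\bspip$. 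Fact (i) is the delicate point, and I expect it to be the main obstacle: it says that truncating the head cannot increase the long-run frequency of an active ($\oplus$) tip. The mechanism is clear — below the tip, $\bshatw^m_t$ carries $\ominus$'s where $\bsw_t$ may still carry $\oplus$'s, and when detachments bring such a site to the tip the full chain acquires a $\oplus$ tip while the truncation does not — but turning this into a genuine coupling is awkward, because the attachment rate depends on the tip, on which the two chains disagree precisely once they have decoupled, so a naive shared-clock coupling breaks down; this is exactly where the stochastic-monotonicity constructions of Section~\ref{sec:finite-chains} are needed (alternatively one compares consecutive truncation levels $m$ and $m+1$ and lets $m\to\infty$).
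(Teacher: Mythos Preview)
Your argument is correct and shares the paper's overall architecture --- establish the velocity formulae \eqref{eq:velocity-ergodic-thm}, dispose of the easy regime $v_-\ge v_+$, and in the hard regime reduce to the comparison facts (i)~$\bshatpi^m_+\le\bspip$ and (ii)~$\bshatpi^m_+\to\bspip$ --- but the execution differs in two places. For the velocity formulae the paper counts transitions $k_{\bsw,\bs{w'}}(t)$ of the head chain and combines the ergodic theorem with the SLLN for the jump chain; your compensator decomposition $x_t=M_t+\int_0^t b(\bsw_s)\,ds$ is an equally valid and arguably cleaner route (the bound $\langle M\rangle_t\le(\max(\lambda^+,\lambda^-)+\mu)\,t$ suffices for $M_t/t\to0$, and the same computation works verbatim for the truncated chain). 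For~(ii) the paper does not pass through a renewal--reward identity but instead controls the discrepancy integral $\calJ_m(t)=\int_0^t\Ind{\bs\hatw^m_s\in\calW_-}\Ind{\bsw_s\in\calW_+}\,ds$ directly via a large-deviations estimate (Lemma~\ref{lem:Jmt-lower-bound}); your per-cycle bound on the event $B_m$, combined with Cauchy--Schwarz and the uniform moment bound $\sup_m\sfE(\tiltau^m_1)^2<\infty$ inherited from the birth-and-death domination of Section~\ref{sec:single-cycle}, is more elementary and does the job. For~(i) you correctly isolate the obstacle --- once the tips disagree the attachment rates differ, so a na\"{\i}ve shared-clock coupling cannot be maintained --- and point to the monotone coupling of Section~\ref{sec:comparison-finite-chains}; this is exactly what the paper does as well (the opening sentence of Section~\ref{sec:convergence-of-pimplus} invokes that coupling, extended to compare $\bs\haty^m_t$ with the full process $\bsy_t$, to obtain $\bspip-\bshatpi^m_+\ge0$).
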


Of course, if the velocity in the RHS of \eqref{eq:velocity-ergodic-thm} does not vanish, then $x_t$ is transient towards $+\infty$ or $-\infty$ depending on the sign of this velocity. We shall deduce Proposition~\ref{prop:velocity-ergodic-thm} below by first showing that $\bshatpi^m_+<\bshatpi^{m+1}_+<\bspip$ for all $m\ge0$ and then proving that in fact $\bspip=\lim_{m\to\infty}\bshatpi^m_+$.

\begin{rem}\sl
 Let ${\widehat\calQ}^m$ be the generator of the finite-state Markov chain ${\bshatw}_t$. By irreducibility, its stationary distribution $\bshatpi^m$ is the only probability distribution satisfying the finite-dimensional system of equations
\begin{equation}
\label{eq:finite-m-chain}
\bshatpi^m\,\widehat\calQ^m=0\,.
\end{equation}
The above proposition implies that if for some $m\ge0$ the solution to this system makes the RHS of \eqref{eq:velocity-ergodic-thm} positive, then both processes $(\bsy_t)_{t\ge0}$ and $(\bs\haty^m_t)_{t\ge0}$ are transient towards $+\infty$. This gives another numerical method of establishing transience towards $+\infty$ for the process $(\bsy_t)_{t\ge0}$.
\end{rem}

\begin{rem}\sl
If $m=0$, the stationary distribution $\bshatpi^m$ becomes
\[
\bshatpi^0=\bl(\bshatpi^0_+,\bshatpi^0_-\br)=\bl(\lambda^-/(1+\lambda^-),1/(1+\lambda^-)\br)
\]
and therefore the velocity $v_0\equiv\bshatpi^0_+\,{v_+}+\bshatpi^0_-\,{v_-}$ is non-negative iff
\begin{equation}
\label{eq:m=1.phase-boundary}
\mu\le\lambda^-\bl(1+\lambda^+\br)\,.
\end{equation}
It has been argued in \cite[Sect.~V.B]{tAplKsRmMbC08}, that the RHS of \eqref{eq:m=1.phase-boundary} provides an asymptotically correct approximation to the ``phase boundary'' $v=0$ in the limit of small $\lambda^+$ and $\lambda^-$. It is interesting to notice that according to Lemma~\ref{lem:strict-monotonicity-of-pimplus} below, every point in the phase space for which the equality in \eqref{eq:m=1.phase-boundary} holds, belongs to the region of positive velocity $v$ for the process $\bsy_t$; i.e., where $\bsy_t$ is transient towards~$+\infty$.
\end{rem}

We start by noticing that the explicit expressions \eqref{eq:velocity-ergodic-thm} for the limiting velocity of the process $x_t$ in Proposition~\ref{prop:velocity-ergodic-thm} follow directly from the Ergodic theorem for continuous time Markov chains. Indeed, for a fixed $m\ge0$, consider the Markov chain $\bs\haty_t\equiv\bs\haty^m_t$ with the initial conditions $\bs\haty_0=(0,\varnothing)$. Decomposing the difference $\hatx^m_t\equiv\hatx^m_t-\hatx^m_0$ into a sum of individual increments and rearranging, gives
\begin{equation}
\label{eq:xt-decomposition}
\hatx^m_t=\sum_{\bsw,\bs{w'}\in\calW^m}k_{\bsw,\bs{w'}}(t)\,\bl[x_{\bs{w'}}-x_{\bsw}\br]\,,
\end{equation}
where $k_{\bsw,\bs{w'}}(t)$ is the total number of transitions $\bsw\mapsto\bs{w'}$ for $\bsw_t$ during the time interval $[0,t]$. Of course,
\[
 x_{\bs{w'}}-x_{\bsw}=\begin{cases}
+1\,,& \quad\text{ if $\bs{w'}=[\bsw\oplus]^m$\,,}\\[.2ex]
-1\,,& \quad\text{ if $\bs{w'}=\ominus[\bsw]^m_1$\,,}\\[.2ex]
0\,,& \quad\text{ else\,,}
\end{cases}
\]
so it is sufficient to concentrate on the transitions which change the position of the microtubule active end. It is not difficult to deduce that the ratios $k_{\bsw,\bs{w'}}(t)/t$ converge to definite limits as $t\to\infty$. For example, fix $\bsw\in\calW^m_+$, $\bs{w'}=[\bsw\oplus]^m$ and write $k_{\bsw}(t)$ for the total number of visits to $\bsw$ for $\bs\hatw^m_t$ during the time interval $[0,t]$. If $T_{\bsw}$ denotes the first return time to state $\bsw$, then the Ergodic Theorem and the Strong Law of Large Numbers imply the almost sure convergence
\[
 \frac{k_{\bsw}(t)}t\to\frac1{\sfE_{\bsw}T_{\bsw}}\qquad\qquad
\frac{k_{\bsw,\bs{w'}}(t)}{k_{\bsw}(t)}\to\frac{\lambda^+}{\|\bsw\|+\lambda^+}\qquad
\text{ as $t\to\infty$\,.}
\]
Consequently, \cite[Chap.~3]{jrN97}
\[
 \frac{k_{\bsw,\bs{w'}}(t)}t\to\frac{\lambda^+}{(\|\bsw\|+\lambda^+)\sfE_{\bsw}T_{\bsw}}\equiv\bshatpi_{\bsw}\lambda^+\,,
\]
almost surely, as $t\to\infty$, where $\bshatpi=\bshatpi^m$ stands for the unique stationary distribution of the Markov chain~$\bs\hatw^m_t$. Repeating the same argument for all other pairs of states $\bsw$, $\bs{w'}$ in \eqref{eq:xt-decomposition} and re-summing, we deduce the second equality in \eqref{eq:velocity-ergodic-thm}.
A similar argument implies the velocity formula for the process $(\bsy_t)_{t\ge0}$.

Our main result here is the following observation:

\begin{lem}\label{lem:strict-monotonicity-of-pimplus}\sl
 Let positive rates $\lambda^+$, $\lambda^-$ and $\mu$ be fixed. Then for all integer $m\ge0$ we have $\bshatpi^m_+<\bshatpi^{m+1}_+$. Moreover, $\lim_{m\to\infty}\bshatpi^m_+=\bspip$.
\end{lem}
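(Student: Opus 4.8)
\medskip
\noindent\emph{Plan of proof.} The structural fact behind everything is that, for each $m\ge0$, the chain $\bshatw^m_t$ coincides with $\bshatw^{m+1}_t$ run under one extra rule: every $\oplus$ that reaches the leftmost site $m+1$ is instantly re-converted to $\ominus$ (equivalently, $\bshatw^m$ is $\bshatw^{m+1}$ with conversion rate $+\infty$ at site $m+1$). One checks this transition by transition; the only non-obvious case is detachment, and there it matches because under the extra rule the leftmost site of $\bshatw^{m+1}$ is permanently $\ominus$, so its detachment move prepends exactly the $\ominus$ that the detachment move of the shorter chain does. Increasing $m$ thus only lowers a conversion rate at the far end, which should make $\oplus$'s more abundant. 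It helps to pass to velocities: by the ergodic identity proved right after Proposition~\ref{prop:velocity-ergodic-thm}, writing $c\DF v_+-v_-=\lambda^+-\lambda^-+\mu$ one has $\lim_t\hatx^m_t/t=c\,\bshatpi^m_++v_-$ and $\lim_t x_t/t=c\,\bspip+v_-$, so for $c\ne0$ the Lemma is equivalent to the statement that $\lim_t\hatx^m_t/t$ is strictly monotone in $m$ (increasing if $c>0$, decreasing if $c<0$) with limit $\lim_t x_t/t$; the boundary case $c=0$ then follows for the weak inequalities by continuity of $\bshatpi^m_+$ in the rates, and for strictness from the coupling below.

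\smallskip
\noindent\emph{Monotonicity.} I would couple $\bs{u}_t:=\bshatw^m_t$ with $\bs{v}_t:=\bshatw^{m+1}_t$ using common Poisson clocks for the conversions at the shared sites, for attachment, and for detachment, and propagate the invariant that the set of $\oplus$-sites of $\bs{u}$ is contained in that of $\bs{v}$: the extra far-end conversion only deletes $\oplus$'s from $\bs{v}$, while conversions and synchronised attachments/detachments preserve the containment. This yields a comparison of the active-end coordinates $\hatx^m_t$ and $\hatx^{m+1}_t$, hence of their velocities, hence (via the identity above, for $c\ne0$) of $\bshatpi^m_+$ and $\bshatpi^{m+1}_+$; coupling $\bshatw^m_t$ against the full chain $\bsw_t$ gives in the same way $\bshatpi^m_+\le\bspip$, so the increasing sequence $\bshatpi^m_+$ has a limit $L\le\bspip$. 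The genuine difficulty here is that attachment and detachment rates depend on the active monomer, which can differ between $\bs{u}$ and $\bs{v}$ (precisely when $\bs{v}$ carries a ``ghost'' $\oplus$ --- one present in $\bs{v}$ but not in $\bs{u}$ --- at the active end); the common attachment/detachment clocks then cannot be literal, and one must split them, allowing the active ends of $\bs{u}$ and $\bs{v}$ to drift apart by an amount that performs a simple random walk while the ghost sits at the active end. The coupling must therefore be run as a graphical construction on $\BbbZ$ so that $\oplus$-site containment is preserved in absolute coordinates, and one must check that the drift of $\hatx^m_t-\hatx^{m+1}_t$ has the sign dictated by $c$. When $\lambda^+\ge\lambda^-$ this is painless --- the containment forces $\hatx^m_t\le\hatx^{m+1}_t$ pathwise and $c>0$ --- so the real work is concentrated in the range $\lambda^+<\lambda^-$.

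\smallskip
\noindent\emph{Strictness.} As a pair of words, $(\bs{u}_t,\bs{v}_t)$ is a finite Markov chain, irreducible on the communicating class of $(\varnothing,\varnothing)$. From $(\varnothing,\varnothing)$ one can, with positive probability, attach enough $\oplus$'s to create a ghost $\oplus$ (necessarily born at the far end), and then drive it to the active end --- by converting every other $\oplus$ and detaching repeatedly --- before it is itself converted; the resulting state has $\bs{v}\in\calW^{m+1}_+$ but $\bs{u}\notin\calW^m_+$. By positive recurrence this state has positive stationary mass, so $\bshatpi^{m+1}_+-\bshatpi^m_+>0$.

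\smallskip
\noindent\emph{Convergence.} To show $L=\bspip$ I would use the renewal structure of Theorem~\ref{thm:main-renewal}. Run $\bsw_t$ and $\bshatw^m_t$ from $(0,\varnothing)$, coupled so that $\bshatw^m_t=[\bsw_t]^m$ until the first time $\sigma_m$ at which the head of $\bsw_t$ reaches length $m+2$ (before $\sigma_m$ no $\oplus$ ever lies strictly to the left of the $m$-window, so $[\bsw_t]^m$ is genuinely Markovian and equals $\bshatw^m_t$), with $\bshatw^m_t\preceq[\bsw_t]^m$ afterwards; in particular the first return time $\tau^m$ of $\bshatw^m_t$ to $\varnothing$ satisfies $\tau^m\le\tiltau_1$, since $\bsw_{\tiltau_1}=\varnothing$ forces an empty $m$-window and hence $\bshatw^m_{\tiltau_1}=\varnothing$. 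On $\{\sigma_m>\tiltau_1\}$ the two trajectories coincide on $[0,\tiltau_1)$, and $\sfP(\sigma_m\le\tiltau_1)\le\sfP(\sharp\ge m+2)$ decays exponentially in $m$ by Theorem~\ref{thm:main-renewal} and Corollary~\ref{cor:barkappa0-exp-moments}, because the head length never exceeds the number $\sharp$ of $\oplus$'s attached during the cycle. Writing $\bshatpi^m_+$ as $\sfE_\varnothing\!\int_0^{\tau^m}\Ind{\bshatw^m_s\in\calW^m_+}\,ds\big/\sfE_\varnothing\tau^m$ and $\bspip$ as $\sfE_\varnothing\!\int_0^{\tiltau_1}\Ind{\bsw_s\in\calW_+}\,ds\big/\sfE_\varnothing\tiltau_1$, and using $\tau^m\le\tiltau_1$ together with the uniform exponential moments of $\tiltau_1$ from Theorem~\ref{thm:main-renewal}, the two numerators and the two denominators differ by quantities that are, by Cauchy--Schwarz, bounded by a power of $\sfP(\sigma_m\le\tiltau_1)$; letting $m\to\infty$ gives $\bshatpi^m_+\to\bspip$, completing the proof.
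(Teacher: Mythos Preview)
The overall plan is reasonable and two of your three steps are cleaner than the paper's; the gap is in the monotone coupling.

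\emph{You are overcomplicating monotonicity.} Once a coupling preserving the relative-coordinate ordering $\bs{u}_t\prec\bs{v}_t$ exists, the ergodic theorem gives $\bshatpi^m_+\le\bshatpi^{m+1}_+$ immediately, since $u_0(t)=\oplus\Rightarrow v_0(t)=\oplus$. There is no need to pass through positions, velocities, or the sign of $c$; the paper does exactly this at the end of Sect.~\ref{sec:comparison-finite-chains}.

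\emph{The coupling is the real gap.} ``Common Poisson clocks'' is not enough, and bare containment $\bs{u}\prec\bs{v}$ is \emph{not} an inductive invariant. From the containment-respecting state $(\bs{u},\bs{v})=(\oplus\ominus\ominus,\,\ominus\oplus\ominus\oplus)$ with $u_0=\ominus$, $v_0=\oplus$, a solitary detachment of $\bs{u}$ yields $\bs{u}=\ominus\oplus\ominus$, violating containment at site~$1$. The paper's four-case coupling (Sect.~\ref{sec:comparison-finite-chains}) avoids such states by propagating a strictly \emph{stronger} invariant: any discrepancy between $\ominus\bs{u}_t$ and $\bs{v}_t$ is confined to the leftmost $\oplus$-block of $\bs{v}_t$ (Lemmas~\ref{lem:coupling-stable} and~\ref{lem:coupling-unstable}); this is what makes the induction close in the mixed case $u_0=\ominus$, $v_0=\oplus$. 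Your proposed switch to absolute coordinates is not developed, and in the range $\lambda^-\ge\lambda^+$ the positions $\hatx^m_t$, $\hatx^{m+1}_t$ are \emph{not} pathwise ordered under the paper's coupling (Case~IIIa, moves $\zeta_1$ and $\zeta_5$ push them in opposite directions), so a position-based argument cannot be simpler. You do need the structural case analysis.

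\emph{Your strictness and convergence arguments are nicer than the paper's.} Given the coupling, strictness via positive recurrence of the finite coupled chain and reachability of a state with $(u_0,v_0)=(\ominus,\oplus)$ is correct and much shorter than the large-deviation Facts~I--III of Sect.~\ref{sec:strict-monotonicity-of-pimplus}. Likewise, your one-cycle convergence argument (exact coupling until the head length exceeds $m{+}1$; bound the discrepancy in the renewal ratios by $\sfE[\tiltau_1\Ind{\sharp\ge m+2}]$ via Cauchy--Schwarz and the exponential tail of $\sharp$) is tidier than the paper's Lemma~\ref{lem:Jmt-lower-bound}. Both, however, presuppose the monotone coupling (in particular $\tau^m\le\tiltau_1$ uses it), so the gap above must be closed first.
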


\begin{rem}\sl
If one interprets the projection operator~$[\,\cdot\,]^m$ as an enforced conversion $\oplus\mapsto\ominus$ outside a finite region, the statement of the lemma justifies the heuristics that {\sf ``a less strict enforcement policy increases the chances of seeing $\oplus$~monomers at the right end of microtubules''}.
\end{rem}

The remainder of this section is devoted to the proof of this lemma. We first establish a non-strict monotonicity of $\bshatpi^m_+$ in $m$ via a coupling argument in Sect.~\ref{sec:comparison-finite-chains}, and then deduce the strict monotonicity of $\bshatpi^m_+$ from a suitable probabilistic bound in Sect.~\ref{sec:strict-monotonicity-of-pimplus}. Finally, Sect.~\ref{sec:convergence-of-pimplus} is devoted to a proof of the convergence claim of Lemma~\ref{lem:strict-monotonicity-of-pimplus}.

\subsection{Comparison of finite chains}\label{sec:comparison-finite-chains}

Fix positive rates $\lambda^+$, $\lambda^-$ and $\mu$ and an integer $m\ge0$, and consider two Markov chains
\[
 \bsy'_t\equiv\bl(x'_t,\bsw'_t\br)\DF\bs{\haty}^m_t\qquad\text{ and }\qquad \bsy''_t\equiv\bl(x''_t,\bsw''_t\br)\DF\bs{\haty}^{m+1}_t
\]
with initial conditions $\bsy'_0=(0,\varnothing)$, $\bsy''_0=(0,\varnothing)$; in the finite size setting here and below, $\varnothing$ refers to a string of an appropriate length consisting of $\ominus$~monomers only.

We now construct a coupling of the processes $\bsy'_t$ and $\bsy''_t$ in such a way that for all $t\ge0$ the following monotonicity property holds:
\begin{equation}
\label{eq:words-ordering}
\bsw'_t\prec\bsw''_t\,,\quad\text{ i.e., }\quad {}^\forall k\quad w'_k(t)=\oplus\quad\Longrightarrow\quad w''_k(t)=\oplus\,.
\end{equation}
Of course, in view of the intrinsic renewal structure and the strong Markov property, it is sufficient to construct a coupling on a single cycle of the Markov chain $\bsy''_t$, i.e., on the time interval between two consecutive visits by~$\bsw''_t$ to the state $\varnothing$. Again, we shall proceed by defining a coupling of a single step at a time.

Recall that we use $\proj\cdot lm$ to denote the projection operator~$\proj\cdot lm:\calW\,\to\,\bl\{\oplus,\ominus\br\}^{m+1}$ from \eqref{eq:finite-projection}, and that  $\|\bsw_t\|$ is the total number of $\oplus$~monomers in $\bsw_t$, recall \eqref{eq:head-norm-def}. In our construction below we shall separately consider four different cases.

\bigskip\noindent{\sf Case~I}:
Let at time $t\ge0$ we have the following configuration:
\[
 \bsy'_t=(x'_t,\bsw')\,,\qquad \bsy''_t=(x''_t,\bsw'')\,,\qquad \bsw'\prec\bsw''\in\calW_-\,,
\]
so that both $\bsw'$ and $\bsw''$ end with a $\ominus$~monomer (in particular, we might have $\bsw'=\varnothing$ or $\bsw'=\bsw''=\varnothing$). Denote
\[
 J_0\DF\bl\{j\ge0:w'_{j}=w''_{j}=\oplus\br\}\,,\qquad
 J_1\DF\bl\{j\ge0:w'_{j}=\ominus\,,\ w''_{j}=\oplus\br\}\,,
\]
so that $\|\bsw'\|=|J_0|$, $\|\bsw''\|=|J_0\cup J_1|=|J_0|+|J_1|$. With $n_0=|J_0|$ and $n_1=|J_1|$ we consider four independent exponential random variables $\zeta_1\sim\Exp(\lambda^-)$, $\zeta_2\sim\Exp(n_0)$, $\zeta_3\sim\Exp(n_1)$, $\zeta_4\sim\Exp(\mu)$ and define $\zeta=\min(\zeta_1,\zeta_2,\zeta_3,\zeta_4)$. Then the next transition occurs at time $t+\zeta$ and is given by
\begin{itemize}
 \item if $\zeta=\zeta_1$, then a $\oplus$~monomer simultaneously attaches to both processes, ie, $\bsy'_{t+\zeta}=(x'_t+1,\proj{\bsw'\oplus}{}m)$ and $\bsy''_{t+\zeta}=(x''_t+1,\proj{\bsw''\oplus}{}{m+1})$;

 \item if $\zeta=\zeta_2$, then two $\oplus$~monomers $w'_{j}$ and $w''_{j}$, with $j\in J_0$ selected uniformly at random, hydrolyse simultaneously, i.e., $w'_{j}(t+\zeta)=\ominus$ and $w''_{j}(t+\zeta)=\ominus$, whereas all other monomers in $\bsw'$ and $\bsw''$ do not change; as a result, we have $x'_{t+\zeta}=x'_t$ and $x''_{t+\zeta}=x''_t$;

 \item if $\zeta=\zeta_3$, then the $\oplus$~monomer $w''_{j}$ with with $j\in J_1$ selected uniformly at random, hydrolyses, whereas all other monomers in $\bsw'$ and $\bsw''$ as well as $x$-components of both $\bsy$-processes do not change;

 \item if $\zeta=\zeta_4$, then the right-most $\ominus$~monomer detaches from both microtubules, in other words, $\bsy'_{t+\zeta}=(x'_t-1,\proj{\bsw'}1m)$ and $\bsy''_{t+\zeta}=(x''_t-1,\proj{\bsw''}1{m+1})$.
\end{itemize}

\bigskip\noindent{\sf Case~II}:
Let at time $t\ge0$ we have the following configuration:
\[
 \bsy'_t=(x'_t,\bsw')\,,\qquad \bsy''_t=(x''_t,\bsw'')\,,\qquad \bsw'\prec\bsw''\,,\quad \bsw'\in\calW_+\,,
\]
so that both $\bsw'$ and $\bsw''$ end with a $\oplus$-polymer. Defining index sets $J_0$ and $J_1$ and their cardinalities $n_0=|J_0|$ and $n_1=|J_1|$ as in {\sf Case~I}, we consider three independent exponential random variables $\zeta_1\sim\Exp(\lambda^+)$, $\zeta_2\sim\Exp(n_0)$, $\zeta_3\sim\Exp(n_1)$ and define $\zeta=\min(\zeta_1,\zeta_2,\zeta_3)$. Then the next transition occurs at time $t+\zeta$ and coincides with the corresponding $\zeta$-transition in {\sf Case~I}.

Our construction in the remaining case shall depend on which of the ``attachment'' parameters~$\lambda$ is bigger; we thus use the notations from \eqref{eq:lambda-rates-coupling},
\[
 \lambda_0\DF\min(\lambda^-,\lambda^+)>0\,,\qquad\delta\lambda\DF\bl|\lambda^+-\lambda^-\br|\ge0\,,
\]
and consider two sub-cases separately.

\bigskip\noindent{\sf Case~IIIa}:
Let $\lambda^-\ge\lambda^+$ and let at time $t\ge0$ we have the following configuration:
\[
 \bsy'_t=(x'_t,\bsw')\,,\qquad \bsy''_t=(x''_t,\bsw'')\,,\qquad \bsw'\prec\bsw''\,,\quad \bsw'\in\calW_-\,,\quad \bsw''\in\calW_+\,.
\]
Define index sets $J_0$ and $J_1$ and their cardinalities $n_0=|J_0|$ and $n_1=|J_1|$ as above, consider five independent exponential random variables, $\zeta_1\sim\Exp(\mu)$, $\zeta_2\sim\Exp(n_0)$, $\zeta_3\sim\Exp(n_1)$, $\zeta_4\sim\Exp(\lambda_0)$, $\zeta_5\sim\Exp(\delta\lambda)$ and put $\zeta=\min(\zeta_1,\zeta_2,\zeta_3,\zeta_4,\zeta_5)$. Then the next transition occurs at time $t+\zeta$ and is given by
\begin{itemize}
\item if $\zeta=\zeta_1$, then the right-most $\ominus$~monomer detaches from the head $\bsw'$, i.e., $\bsy'_{t+\zeta}=(x'_t-1,\proj{\bsw'}1m)$ and $\bsy''_{t+\zeta}=\bsy''_t$;

 \item if $\zeta=\zeta_2$, then two $\oplus$~monomers $w'_{j}$ and $w''_{j}$, with $j\in J_0$ selected uniformly at random, hydrolyse simultaneously, whereas all other monomers in $\bsw'$ and $\bsw''$ as well as $x$-components of both $\bsy$-processes do not change;

 \item if $\zeta=\zeta_3$, then a $\oplus$~monomer $w''_{j}$ with with $j\in J_1$ selected uniformly at random, hydrolyses, whereas all other monomers in $\bsw'$ and $\bsw''$ as well as $x$-components of both $\bsy$-processes do not change;

 \item if $\zeta=\zeta_4$, then a $\oplus$~monomer simultaneously attaches to both processes, ie, $\bsy'_{t+\zeta}=(x'_t+1,\proj{\bsw'\oplus}{}m)$ and $\bsy''_{t+\zeta}=(x''_t+1,\proj{\bsw''\oplus}{}{m+1})$;

 \item if $\zeta=\zeta_5$, then a $\oplus$~monomer attaches to the head $\bsw'_t$ only, in other words, $\bsy'_{t+\zeta}=(x'_t+1,\proj{\bsw'\oplus}{}m)$ and $\bsy''_{t+\zeta}=\bsy''_t$.
\end{itemize}

\bigskip\noindent{\sf Case~IIIb}:
If $\lambda^-<\lambda^+$ and the departing configuration is the same as in {\sf Case~IIIa}, we use the same construction as there with the only difference that for $\zeta=\zeta_5\sim\Exp(\delta\lambda)$, a $\oplus$~monomer attaches to $\bsy''_t$ only, i.e., $\bsy''_{t+\zeta}=(x''_t+1,\proj{\bsw''\oplus}{}{m+1})$ but $\bsy'_{t+\zeta}=\bsy'_t$.

\begin{lem}\label{lem:coupling-stable}\sl
 Let positive rates $\lambda^+$, $\lambda^-$, $\mu$ and an integer $m\ge0$ be fixed. Consider the truncated processes
\[
\bsy'_t=(x'_t,\bsw'_t)\DF\bs\haty^m_t\,,\qquad\bsy''_t=(x''_t,\bsw''_t)\DF\bs\haty^{m+1}_t
\]
starting from the ``empty'' initial conditions $\bsy'_0=(0,\varnothing)$, $\bsy''_0=(0,\varnothing)$.

If $\lambda^-\ge\lambda^+$, then for every fixed $t\ge0$ in the coupling above we either have $\bsw'_t=\bsw''_t$ or there exists a unique $j_0\ge0$ such that $\bsw''_{j_0}=\oplus$, $\bsw'_{j_0}=\ominus$ and
\[
\begin{gathered}
 \bsw'_{j}(t)=\bsw''_{j}(t)\qquad{}^\forall\, j<j_0\,,\qquad
\bsw'_{j}(t)=\bsw''_{j}(t)=\ominus\qquad{}^\forall\, j>j_0\,.
\end{gathered}
\]
\end{lem}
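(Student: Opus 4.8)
The plan is to exhibit a combinatorial invariant of the coupled dynamics and to check that it survives every elementary transition. Let $\mathcal{G}$ be the set of pairs $(\bsw',\bsw'')\in\{\oplus,\ominus\}^{m+1}\times\{\oplus,\ominus\}^{m+2}$ for which either (a)~$w'_j=w''_j$ for $0\le j\le m$ and $w''_{m+1}=\ominus$, or (b)~there is a (necessarily unique) index $j_0\in\{0,\dots,m+1\}$ with $w''_{j_0}=\oplus$, $w'_{j_0}=\ominus$, $w'_j=w''_j$ for $j<j_0$, and $w'_j=w''_j=\ominus$ for $j>j_0$; here and below a finite word is padded on the left with $\ominus$~monomers, so that $w'_{m+1}$ is read as $\ominus$. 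Every pair in $\mathcal{G}$ satisfies $\bsw'\prec\bsw''$, and $\mathcal{G}$ is precisely the dichotomy asserted by the lemma. Since all jump rates in the coupling are bounded (by $\lambda^++\lambda^-+\mu+m+2$), the coupled chain is non-explosive and its trajectory is piecewise constant with finitely many jumps on any bounded interval; hence it suffices to show, by induction on the number of jumps, that the current pair stays in~$\mathcal{G}$. The base case is immediate from $\bsw'_0=\bsw''_0=\varnothing$, which lies in~(a).

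For the inductive step I would first note that on $\mathcal{G}$ the ``Case'' governing the next transition is forced by the shape of the pair: it is Case~II when $w'_0=\oplus$ (then also $w''_0=\oplus$); Case~I when $w'_0=w''_0=\ominus$; and Case~IIIa precisely when $w'_0=\ominus$, $w''_0=\oplus$, which inside $\mathcal{G}$ puts us in~(b) with $j_0=0$, i.e.\ $\bsw'$ is all $\ominus$ and $\bsw''$ carries a single $\oplus$ at position~$0$. One then runs through the transitions. Hydrolysis of a monomer in $J_0$ (transitions $\zeta_2$ of Cases~I and~II; impossible in Case~IIIa, where $J_0=\varnothing$) flips that monomer to $\ominus$ in both words and alters neither the common part nor $j_0$ (note $j_0\notin J_0$). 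Hydrolysis of the monomer $j\in J_1$ (transition $\zeta_3$) is possible only in~(b), where $J_1=\{j_0\}$, and it equalises the two words, giving~(a). Detachment removes the rightmost monomer: in Case~I both words shift down one position and are re-padded with $\ominus$, so~(a) is preserved and in~(b) the discrepancy index drops to $j_0-1\ge0$; in Case~IIIa only the all-$\ominus$ word $\bsw'$ shrinks, leaving the pair in~(b) with $j_0=0$. Attachment to both words (transitions $\zeta_1$ of Cases~I and~II and $\zeta_4$ of Case~IIIa) shifts both up one position, inserts $\oplus$ at position~$0$, and truncates the top monomer; in~(b) the discrepancy index moves to $j_0+1$, and in the boundary sub-case $j_0=m+1$ --- where the truncated monomer of $\bsw''$ is its leading $\oplus$ --- the two words coincide again on $0,\dots,m$ and the pair lands in~(a) or~(b) according as the monomer rotated into position~$m+1$ is $\ominus$ or $\oplus$; alternative~(a) is handled identically. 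Finally, attachment to $\bsw'$ alone (transition $\zeta_5$ of Case~IIIa) acts on the all-$\ominus$ word $\bsw'$ from the state $j_0=0$ and yields exactly the word $\bsw''$, so the pair becomes equal, i.e.\ alternative~(a). In each subcase the new pair lies in $\mathcal{G}$, which completes the induction; as the configuration is constant between jumps, the claim holds at every $t\ge0$.

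The hypothesis $\lambda^-\ge\lambda^+$ enters only in the last transition: it ensures that the ``surplus'' attachment of Case~IIIa (rate $\delta\lambda=\lambda^--\lambda^+$) is delivered to the shorter word $\bsw'$ and so closes the gap, whereas in the opposite regime (Case~IIIb) the same move would open a second discrepancy and the single-index picture would break down. I expect the only genuine difficulty to be bookkeeping --- tracking how the discrepancy index $j_0$ is transported by the up- and down-shifts, and controlling the boundary effect of the truncation $[\,\cdot\,]^m$ at the top of the finite window (the sub-case $j_0=m+1$) --- rather than anything conceptually new.
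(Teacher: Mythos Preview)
Your proof is correct and follows essentially the same approach as the paper's: both argue by induction on the number of jumps that the single-discrepancy invariant (your set $\mathcal{G}$) is preserved under every coupled transition, with Case~IIIa from the state $(\varnothing,\oplus)$ being the key sub-case where the hypothesis $\lambda^-\ge\lambda^+$ is used. The paper's own proof is much terser---it declares the verification ``straightforward'' except for the explicit check of the Case~IIIa transitions---whereas you spell out each case and the boundary behaviour at $j_0=m+1$, which is exactly the bookkeeping the paper leaves to the reader.
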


\begin{rem}\sl
In other words, Lemma~\ref{lem:coupling-stable} states that in the region $\lambda^-\ge\lambda^+$ for every $t\ge0$ the words $\ominus\bs\hatw^m_t$ and $\bs\hatw^{m+1}_t$ either coincide at all positions or have exactly one discrepancy at the position of the left-most $\oplus$~monomer in $\bs\hatw^{m+1}_t$.
\end{rem}

\begin{proof}
It is straightforward to verify that the claim of the lemma holds until the first visit to the state $(\bsw'_t,\bsw''_t)=(\varnothing,\oplus)$. Then in the cases $\zeta=\zeta_3$ or $\zeta=\zeta_5$ of {\sf Case~IIIa} we get $\bsw'_{t+\zeta}=\bsw''_{t+\zeta}$ (and coincides with $\varnothing$ or $\oplus$ respectively) and in the cases $\zeta=\zeta_1$ or $\zeta=\zeta_4$ the discrepancy remains of the same type (at a single place); clearly $n_0=0$ implies that $\zeta=\zeta_2$ does not happen with probability one. The result now follows from a straightforward induction.
\end{proof}

It remains to study the case $\lambda^+>\lambda^-$.

\begin{lem}\label{lem:coupling-unstable}\sl
 For fixed integer $m\ge0$ and positive rates $\lambda^+$, $\lambda^-$ and $\mu$, consider truncated processes $(\bsy'_t)_{t\ge0}$ and $(\bsy''_t)_{t\ge0}$ as defined in Lemma~\ref{lem:coupling-stable}.

If $\lambda^+\ge\lambda^-$, then for every fixed $t\ge0$ we have $\ominus\bsw'_t\prec\bsw''_t$, recall \eqref{eq:words-ordering}. Moreover, if these heads do not coincide ($\ominus\bsw'_t\neq\bsw''_t$), then there exists $j_1\ge j_0\ge0$ such that $\bsw'_{{j_0}}(t)=\bsw'_{{j_1}}(t)=\ominus$, $\bsw''_{{j_0}}(t)=\bsw''_{{j_1}}(t)=\oplus$, and
\[
\begin{gathered}
\bsw'_{j}(t)=\bsw''_{j}(t)\qquad{}^\forall\, j<j_0\,,\\[1ex]
\bsw'_{j}(t)=\ominus\qquad{}^\forall\, j>j_0\,,\qquad
\bsw''_{j}(t)=\ominus\qquad{}^\forall\, j>j_1\,.
\end{gathered}
\]
\end{lem}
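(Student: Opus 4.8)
The plan is to establish, by induction over the (countably many) transitions of the coupled pair $\bl(\bsw'_t,\bsw''_t\br)$, a structural invariant $(\star)$ of which the statement of the lemma is an immediate reformulation. It is convenient to phrase $(\star)$ thus: $\ominus\bsw'_t\prec\bsw''_t$ and, for every position $j$ with $w'_j(t)=\ominus$ and $w''_j(t)=\oplus$, one has $w'_i(t)=\ominus$ for all $i\ge j$ (with the usual convention $w'_i\DF\ominus$ for $i>m$). Equivalently: either $\ominus\bsw'_t=\bsw''_t$, or there is a threshold position $j_0$ with $w''_{j_0}(t)=\oplus$ such that $\bsw'_t$ agrees with $\bsw''_t$ at every position $<j_0$ and is identically $\ominus$ at every position $\ge j_0$; taking $j_0$ as the left-most and $j_1$ as the right-most position of discrepancy then yields precisely the assertion of the lemma. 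Since, by the intrinsic renewal structure and the strong Markov property, it suffices to analyse a single cycle of $\bsw''_t$ (between consecutive visits to $\varnothing$), and since $(\star)$ forces $\bsw'_t=\varnothing$ whenever $\bsw''_t=\varnothing$, the induction may be started from the pair $(\varnothing,\varnothing)$, where $(\star)$ holds trivially.

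For the inductive step I would split according to the three configurations that $(\star)$ permits: (i) $w'_0(t)=w''_0(t)=\ominus$, governed by Case~I; (ii) $w'_0(t)=w''_0(t)=\oplus$, governed by Case~II; and (iii) $w'_0(t)=\ominus$, $w''_0(t)=\oplus$, which under $(\star)$ forces $j_0=0$ and $\bsw'_t$ identically $\ominus$, governed by Case~IIIb (at $\lambda^+=\lambda^-$ the move $\zeta_5$ is degenerate and Cases~IIIa and~IIIb coincide; the coincidence alternative reduces to (i) or (ii) according to $w'_0(t)$). For each of the finitely many transition types of the relevant case one checks that $(\star)$ is reproduced, and the bookkeeping is essentially mechanical: a simultaneous attachment ($\zeta_1$ in Cases~I, II, $\zeta_4$ in Case~IIIb) shifts all indices up by one and appends a matched $\oplus$ at position $0$, carrying a discrepancy range $[j_0,j_1]$ to $[j_0{+}1,j_1{+}1]$; a matched hydrolysis $\zeta_2$ acts at a position $<j_0$ and keeps the words synchronised there; a hydrolysis $\zeta_3$ of a $\oplus$ of $\bsw''_t$ lies inside the discrepancy range and can only shrink it (or, if it removes the unique discrepancy, produce coincidence); a detachment ($\zeta_4$ in Case~I, $\zeta_1$ in Case~IIIb) shifts indices down by one and pads with $\ominus$ at the top, which is compatible with $(\star)$ because $\bsw'_t$ is identically $\ominus$ above $j_0$; and the asymmetric attachment $\zeta_5$ of Case~IIIb adds a $\oplus$ at position $0$ to $\bsw''_t$ only, which — because $\bsw'_t$ is then identically $\ominus$ — merely enlarges the discrepancy range at its upper end while leaving $j_0=0$ and the all-$\ominus$ property of $\bsw'_t$ untouched.

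The one step I expect to demand real care is the interaction of a simultaneous attachment with the truncation at the right end, where $\bsw'_t$ has length $m+1$ but $\bsw''_t$ has length $m+2$. Once the left-most discrepancy has migrated to the top one has $w''_{m+1}(t)=\oplus$ with no corresponding slot in $\bsw'_t$; a further attachment to both then drops $w''_{m+1}(t)$ from $\bsw''_t$, promotes $w''_m(t)$ to position $m+1$, and simultaneously drops $w'_m(t)$ from $\bsw'_t$. One must verify that in every such boundary configuration the resulting pair still satisfies $(\star)$ — concretely, that a $\oplus$ falling off the end of $\bsw'_t$ is always matched by a $\oplus$ of $\bsw''_t$ that either also falls off or lands at position $m+1$, so that agreement below the (possibly updated) threshold $j_0$ and the all-$\ominus$ property above it persist. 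Completing this boundary check closes the induction, and since $(\star)$ is exactly the stated description of $\bsw'_t$ and $\bsw''_t$, the lemma follows.
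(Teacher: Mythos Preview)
Your proposal is correct and follows essentially the same route as the paper: an induction over the coupled transitions showing that the invariant $(\star)$ (agreement below a threshold $j_0$, $\bsw'$ identically $\ominus$ from $j_0$ on) is preserved by each clock in Cases~I, II and~IIIb, with the truncation boundary singled out as the only delicate step. The paper's write-up is more conceptual---it stresses that whenever the right-most monomers agree ($j_0>0$) the two chains run in lock-step so the discrepancy cannot grow, and that growth is confined to the state $(\varnothing,\oplus)$---whereas you carry out the same induction by an explicit case check; the content is the same.
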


\begin{rem}\sl
In other words, Lemma~\ref{lem:coupling-unstable} states that in the region $\lambda^+\ge\lambda^-$, for every fixed $t\ge0$ the words $\ominus\bsw'_t$ and $\bsw''_t$ either coincide, or there exists $j_0\ge0$ such that both strings $\ominus\bsw'_t$ and $\bsw''_t$ coincide to the right of  $j_0$ and $\bsw'_{j}(t)=\ominus$ for all $j\ge j_0$. Notice that in contrast to the case $\lambda^-\ge\lambda^+$, the discrepancy between $\ominus\bsw'_t$ and $\bsw''_t$ can now spread out over an interval containing several consecutive left-most $\oplus$~monomers in $\bsw''_t$.
\end{rem}

\begin{proof}
As in the case $\lambda^-\ge\lambda^+$, there is at most one discrepancy between $\ominus\bsw'_t$ and $\bsw''_t$ over the time interval until the first visit to the state $(\bsw'_t,\bsw''_t)=(\varnothing,\oplus)$; moreover, it can only happen at the position of the left-most $\oplus$~monomer in $\bsw''_t$.

We next observe that the joint dynamics described above guarantees that if at a moment $t\ge0$ the right-most monomers in $\bsw'_t$ and $\bsw''_t$ coincide, i.e., $w'_0(t)=w''_0(t)$, then both processes $\bsy'_t$ and $\bsy''_t$ shall run in parallel with all pairs of $\oplus$~monomers attached to $\bsw'$ and $\bsw''$ at times $t+s\ge t$ evolving identically till at least the first of the following events happens:
\begin{itemize}
\item the pair $(w'_0(t),w''_0(t))$ simultaneously leaves the process $(\bsy'_t,\bsy''_t)$;
\item the number of monomers to the right of the initial pair reaches $m$; at that moment $t+s$ we either have $\ominus\bsw'_{t+s}=\bsw''_{t+s}$ (if this pair has simultaneously hydrolysed by time $t+s$) or $\bsw'_{t+s}=[\bsw''_{t+s}]^m$  and $w''_{(m+1)}=\oplus$.
\end{itemize}

In other words, the discrepancy between $\bsw'_t$ and $\bsw''_t$  does not grow when $j_0>0$ and it can only grow when $j_0=0$, equivalently, when $\bsw''_t\in\calW_+$ and $\bsw'_t=\varnothing$; notice however, that this discrepancy can shorten or disappear at all due to simultaneous spontaneous hydrolysis of $\oplus$~monomers in both heads or due to the ``enforced'' hydrolysis on the left end of the heads. A straightforward induction now finishes the proof of the lemma.
\end{proof}

As a result, the joint dynamics described above gives the ordering \eqref{eq:words-ordering}, $\bsw'_t\prec\bsw''_t$ for all $t\ge0$, so that the Ergodic Theorem implies $\bshatpi^{m}_+\le\bshatpi^{m+1}_+$. Our next step is to establish the strict inequality in the last inequality.

\subsection{Strict monotonicity of $\bshatpi^m_+$}\label{sec:strict-monotonicity-of-pimplus}

To show the strict monotonicity of $\bshatpi^{m}_+$ as a function of $m$, we prove here that in addition to the ordering property \eqref{eq:words-ordering}, the coupling constructed in the previous section guarantees that the long-time density $\bshatpi^{m,m+1}_{\ominus,\oplus}$ of the moments when $\bsw'_t\in\calW_-$ and $\bsw''_t\in\calW_+$ is strictly positive. Then Lemma~\ref{lem:strict-monotonicity-of-pimplus} shall follow directly from the standard Ergodic theorem.

Our argument below shall be based upon the following three facts:
\begin{description}
 \item [{\sf Fact~I}]
Let $\tilT^{m+1}$ be the time between the consecutive returns by $\bs\hatw^{m+1}_t$ to the initial state $\varnothing$, and let $\sharp^{m+1}_t$ denote the number of returns by $(\bs\hatw^{m+1}_t)_{t\ge0}$ to~$\varnothing$ by time $t$. A straightforward generalization of the coupling in Sect.~\ref{sec:single-cycle} above shows that $\tilT^{m+1}$ is stochastically smaller than $\tiltau_1$, which by Theorem~\ref{thm:main-renewal} has exponential moments in a neighbourhood of the origin. Therefore, $\sfE\tilT^{m+1}\le\sfE\tiltau_1<\infty$ and the strong Markov property together with the LDP implies that the probability of the complement $\overline{\calA^1_t}$ to the event $\calA^1_t\DF\bl\{\omega:\sharp^{m+1}_t\ge\frac{t}{2\sfE\tiltau_1}\br\}$ decays exponentially fast as $t\to\infty$.

 \item [{\sf Fact~II}]
Consider now the joint dynamics of $(\bs\haty^m_t)_{t\ge0}$ and $(\bs\haty^{m+1}_t)_{t\ge0}$ starting from the ``empty head'' initial state $\bs\haty^m_0=(0,\varnothing)$, $\bs\haty^{m+1}_0=(0,\varnothing)$ as described above. Denote by $\hatpmmp$ the probability of the event
\[
 \Bl\{\omega:\text{ for some }s<\tilT^{m+1}\,,\ \bs\hatw^m_s=\varnothing\,,\ \bs\hatw^{m+1}_s=\oplus \Br\}\,.
\]
By a ``single-trajectory'' argument we easily deduce that $\hatpmmp>0$.

We next write $\tilT^{m+1}_k$ for the $k$th return of the pair $(\bs\hatw^m_t,\bs\hatw^{m+1}_t)_{t\ge0}$ to the state $(\varnothing,\varnothing)$, and use $\sharp^{\ominus\oplus}_k\equiv\sharp^{\ominus\oplus}(k)$ to denote the total number of returns to the state~$(\varnothing,\oplus)$ up to time $\tilT^{m+1}_k$ by the pair of processes $(\bs\hatw^m_t,\bs\hatw^{m+1}_t)_{t\ge0}$. Using the strong Markov property together with the Large Deviation Principle for binomial random variables we deduce that the probability of the complement $\overline{\calA^2_k}$ to the event $\calA^2_k\DF\bl\{\omega:\sharp^{\ominus\oplus}_k\ge\,\frac\hatpmmp2 k\br\}$ decays exponentially fast as $k\to\infty$.

 \item [{\sf Fact~III}]
By the construction in Sect.~\ref{sec:comparison-finite-chains} above, the holding time $\hat\eta^m_{\ominus\oplus}$ of the process $(\bs\hatw^m_t,\bs\hatw^{m+1}_t)_{t\ge0}$ at the state $(\varnothing,\oplus)$ has exponential distribution with parameter $\nu=1+\mu+\max(\lambda^+,\lambda^-)>0$. Let $p_\nu>0$ be the probability of the event $\bl\{\omega:\hat\eta^m_{\ominus\oplus}>1/\nu\br\}$. Consider now $L$ separate visits by $(\bs\hatw^m_t,\bs\hatw^{m+1}_t)_{t\ge0}$ to the state $(\varnothing,\oplus)$ and denote by $\sharp^\nu_L$ the number of those of them whose holding times $\hat\eta^m_{\ominus\oplus}$ are larger than $1/\nu$. By the standard LDP we deduce that the probability of the complement $\overline{\calA^3_L}$ to the event $\calA^3_L\DF\bl\{\omega:\sharp^\nu_L\ge\frac{p_\nu}2 L\br\}$ decays exponentially fast as $L\to\infty$.
\end{description}

We now deduce the strict monotonicity of $\bshatpi^m_+$ in Lemma~\ref{lem:strict-monotonicity-of-pimplus}; to this end, consider the events
\[
 \begin{gathered}
  \calB^1_t\DF\Bl\{\omega:\sharp^{m+1}_t\ge\frac{t}{2\sfE\tiltau_1}\Br\}\,,
\qquad \calB^2_t\DF\Bl\{\omega:\sharp^{\ominus\oplus}_t\ge\,\frac\hatpmmp2\,\frac{t}{2\sfE\tiltau_1}\Br\}\,,
\\[.6ex]
\calB^3_t\DF\Bl\{\omega:\sharp^\nu_t\ge\frac{p_\nu}2\frac\hatpmmp2\,\frac{t}{2\sfE\tiltau_1}\Br\}\,.
 \end{gathered}
\]
It follows from the discussion above that the probabilities $\sfP\bl(\,\overline{\calB^1_t}\,\br)$, $\sfP\bl(\,\overline{\calB^2_t}\mid{\calB^1_t}\,\br)$, and $\sfP\bl(\,\overline{\calB^3_t}\mid{\calB^2_t}\,\br)$ decay exponentially fast as $t\to\infty$ (here and below we write $\overline\calA$ for the complement of the event $\calA$). On the event $\calB^1_t\cap\calB^2_t\cap\calB^3_t$ the total time spent at the state $(\varnothing,\oplus)$ by the trajectories $(\bs\hatw^m_s,\bs\hatw^{m+1}_s)_{0\le s\le t}$ is bounded below by $p_\nu\,\hatpmmp\,t/(16\,\nu\,\sfE\tiltau_1)$
for all $t$ large enough, $t\ge t_1$. On the other hand, by the elementary inequality
\[
  \sfP\bl(\,\overline{A\cap B\cap C}\,\br)\le\sfP\bl(\,\overline{A}\,\br)
+\sfP\bl(\,\overline{B}\mid A\br)+\sfP\bl(\,\overline{C}\mid B\,\br)
\]
and the estimates above, the probability of the complement to $\calB^1_t\cap\calB^2_t\cap\calB^3_t$ satisfies
\[
 \bar p\equiv\sfP\bl(\,\overline{\calB^1_t\cap\calB^2_t\cap\calB^3_t}\,\br)\le
\sfP\bl(\,\overline{\calB^1_t}\,\br)+\sfP(\,\overline{\calB^2_t}\mid{\calB^1_t}\,\br)+\sfP\bl(\,\overline{\calB^3_t}\mid{\calB^2_t}\,\br)\le\frac12
\]
provided $t$ is large enough, $t\ge t_2$.

We finally deduce that for all $t\ge\max(t_1,t_2)$ we have
\[
\begin{split}
\bshatpi^{m+1}_+-\bshatpi^m_+\equiv\bshatpi^{m,m+1}_{\ominus,\oplus}
&\DF\lim_{t\to\infty}\frac1t\int_0^t\Ind{\hatw^m_0(s)=\ominus}\Ind{\hatw^{m+1}_0(s)=\oplus}\,ds
\\[1ex]
&\,\ge\,\frac{p_\nu\,\hatpmmp}{16\,\nu\,\sfE\tiltau_1}(1-\bar p)
\ge\frac{p_\nu\,\hatpmmp}{32\,\nu\,\sfE\tiltau_1}>0\,.
\end{split}
\]
This finishes the proof of the strict monotonicity of $\bshatpi^m_+$ in Lemma~\ref{lem:strict-monotonicity-of-pimplus}.

\subsection{Convergence of $\bshatpi^m_+$}\label{sec:convergence-of-pimplus}

We first observe that an obvious modification of the construction in Sect.~\ref{sec:comparison-finite-chains} provides a coupling of the processes $\bsy'_t\equiv(x'_t,\bsw'_t)\DF\bs{\haty}^m_t$ and  $\bsy''_t\equiv\bsy_t$. Consequently, the Ergodic theorem implies that
\[
 \bspip-\bshatpi^m_+=\lim_{t\to\infty}\frac1t\,
\int_0^t\Ind{\bs\hatw^m_s\in\calW_-}\Ind{\bsw_s\in\calW_+}\,ds\ge0\,,
\]
so it remains to bound above the last integral. We shall do this by using an argument similar to that in Sect.~\ref{sec:strict-monotonicity-of-pimplus}.

Let an integer $m\ge0$ be fixed. As in \eqref{eq:tily-def}, we shall use $\tiltau_\ell$ to denote the moment of $\ell$th return to the state $\varnothing$ by the process $\bsw_t$ (by monotonicity of the coupling we then also have $\bs\hatw^m_{\tiltau_\ell}=\varnothing$). We shall say that the {\sf discrepancy event} occurs during $\ell$th cycle, if for some $t\in[\tiltau_{\ell-1},\tiltau_\ell)$ we have $\bl(\bsw_t,\bs\hatw^m_t\br)\in\calW_+\times\{\varnothing\}$, ie, at time $t$ the right-most monomer of $\bsw_t$ is a $\oplus$~monomer, whereas $\bs\hatw^m_t$ is empty. Of course, this is only possible if at some $s\in[\tiltau_{\ell-1},t)$ we had $\bsw_s=w_{m+1}\dots w_{1}w_0$ with $w_{m+1}=w_0=\oplus$ and during $[s,t)$ all monomers to the right of $w_{m+1}$ detached from $\bsw_s$ with $w_{m+1}$ still being in the $\oplus$~state.

By independence and memoryless property of the hydrolysis process for individual monomers, the probability of discrepancy event during any given cycle drops sharply as $m$ increases. Indeed, by the observation above, the discrepancy event cannot occur for cycles with less than $3(m+1)+2=3m+5$ transitions, whereas by Corollary~\ref{cor:barkappa0-exp-moments} the probability of the event $\bl\{\barkappa_1\ge 3m+5\br\}$ is exponentially small as a function of~$m$.

Let $t>0$ be fixed; write $\calD^m_t$ for the collection of all indices $\ell$ such that a discrepancy event occurs during $[\,\tiltau_{\ell-1},\tiltau_\ell\,)$. If $\ell_0\equiv\max\bl\{\ell:\tiltau_\ell\le t\br\}$, then 
\begin{equation}
\label{eq:Jmt-integral}
\calJ_m(t)\DF\,\int_0^t
\Ind{\bs\hatw^m_s\in\calW_-}\Ind{\bsw_s\in\calW_+}\,ds
\le\sum_{\ell\in\calD^m_t}\bl(\tiltau_\ell-\tiltau_{\ell-1}\br)+\bl(t-\tiltau_{\ell_0}\br)\,.
\end{equation}
Our aim here is to prove the following result.

\begin{lem}\label{lem:Jmt-lower-bound}\sl
For every $\varepsilon>0$ there exists $m\ge0$ large enough such that for some $A>0$ and $a>0$ one has $\sfP\bl(\calJ_m(t)\ge\varepsilon t\br)\le Ae^{-at}$ uniformly in $t\ge0$.
\end{lem}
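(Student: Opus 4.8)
The plan is to bound $\calJ_m(t)$ by controlling two sources of contribution in \eqref{eq:Jmt-integral}: the cycles $\ell\in\calD^m_t$ on which a discrepancy event actually occurs, and the possibly long ``incomplete'' last cycle $[\,\tiltau_{\ell_0},t\,)$. Fix $\varepsilon>0$. First I would make the discrepancy probability $p_m\DF\sfP(\text{discrepancy in a single cycle})$ small: by the observation preceding the lemma, a discrepancy event requires at least $3m+5$ transitions during the cycle, so $p_m\le\sfP\bl(\barkappa_1\ge 3m+5\br)$, which by Corollary~\ref{cor:barkappa0-exp-moments} is at most $\barz^{-(3m+5)}$. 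Choose $m$ so large that $p_m$ is below a threshold to be fixed below (for instance $p_m\le\varepsilon/(8\,\sfE\tiltau_1)$, using $\sfE\tiltau_1<\infty$ from Theorem~\ref{thm:main-renewal}).

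Next I would introduce the renewal counting process $N_t\DF\ell_0=\max\{\ell:\tiltau_\ell\le t\}$ and split on whether $N_t$ is close to its typical value $t/\sfE\tiltau_1$. On the event $\calA_t\DF\{N_t\le 2t/\sfE\tiltau_1\}$, the number of cycles is linearly bounded, and the indices $\ell\in\calD^m_t$ form a subset determined cycle-by-cycle; by the strong Markov property, whether a discrepancy occurs in cycle $\ell$ is an i.i.d.\ Bernoulli$(p_m)$ trial, independent across cycles, and (conditionally on occurrence) the length $\tiltau_\ell-\tiltau_{\ell-1}$ still has the exponential-moment bound from Theorem~\ref{thm:main-renewal}. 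Hence $\sum_{\ell\in\calD^m_t}(\tiltau_\ell-\tiltau_{\ell-1})$ is stochastically dominated by a random sum of at most $2t/\sfE\tiltau_1$ terms, each of which is $0$ with probability $1-p_m$ and otherwise an independent copy of a variable with exponential moments; its mean is at most $p_m\cdot(2t/\sfE\tiltau_1)\cdot\sfE[\theta_1]\le\varepsilon t/2$ for our choice of $m$ (here $\sfE[\theta_1]$ is finite). A Chernoff/large-deviation bound — using the exponential moments of $\theta_1$ and of the Bernoulli trials, exactly as in the arguments of Section~\ref{sec:strict-monotonicity-of-pimplus} — gives $\sfP\bl(\sum_{\ell\in\calD^m_t}(\tiltau_\ell-\tiltau_{\ell-1})\ge\varepsilon t/2,\ \calA_t\br)\le A_1 e^{-a_1 t}$. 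The complementary event $\overline{\calA_t}$, i.e.\ $N_t>2t/\sfE\tiltau_1$, forces $\tiltau_{\lceil 2t/\sfE\tiltau_1\rceil}\le t$, which is an atypically small value for a sum of i.i.d.\ positive variables with exponential moments, so by Cram\'er's theorem $\sfP(\overline{\calA_t})\le A_2 e^{-a_2 t}$ as well.

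Finally I would dispose of the boundary term $t-\tiltau_{\ell_0}$: since $t-\tiltau_{\ell_0}\le\theta_{\ell_0+1}=\tiltau_{\ell_0+1}-\tiltau_{\ell_0}$, which is the current-cycle length at time $t$, one has $\sfP(t-\tiltau_{\ell_0}\ge\varepsilon t/2)\le\sfP(\exists\,\ell\le N_t+1:\theta_\ell\ge\varepsilon t/2)$, and on $\calA_t$ this is a union over at most $2t/\sfE\tiltau_1+1$ events each of probability $\le C e^{-c\varepsilon t/2}$ (exponential moments of $\theta_1$), hence exponentially small; off $\calA_t$ we already have the bound. Collecting the three pieces and enlarging constants gives $\sfP(\calJ_m(t)\ge\varepsilon t)\le\sfP\bl(\sum_{\ell\in\calD^m_t}(\tiltau_\ell-\tiltau_{\ell-1})\ge\varepsilon t/2\br)+\sfP(t-\tiltau_{\ell_0}\ge\varepsilon t/2)\le Ae^{-at}$ uniformly in $t\ge0$, as required. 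The main obstacle is the first term: one must argue carefully that, conditionally on the cycle structure, the occurrences of discrepancy events are independent and the associated cycle lengths retain uniformly good exponential moments, so that a single Chernoff bound applies to the random sum; this is where the renewal/strong-Markov decomposition and Corollary~\ref{cor:barkappa0-exp-moments} do the real work, the rest being routine large-deviation estimates of the type already used above.
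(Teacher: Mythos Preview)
Your overall strategy is sound and close in spirit to the paper's: reduce to a large-deviation estimate for a sum of i.i.d.\ contributions over the renewal cycles, after making the per-cycle discrepancy contribution small by choosing $m$ large. There is, however, a real gap in your main step.

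You claim that $\sum_{\ell\in\calD^m_t}\theta_\ell$ is stochastically dominated by a sum of terms that are ``$0$ with probability $1-p_m$ and otherwise an independent copy'' of a variable with the exponential moments of $\theta_1$, and then compute the mean as $p_m\cdot\sfE\theta_1$. This domination is false: the discrepancy event $D_\ell$ and the cycle length $\theta_\ell$ are \emph{positively} dependent, since a discrepancy forces $\tilkappa^\ell_0\ge 3m+5$ and hence a long cycle. In particular $\sfP(D_\ell\mid\theta_\ell>x)$ is not bounded by $p_m$, so $\theta_\ell\Ind{D_\ell}$ is not dominated by $\mathrm{Bern}(p_m)\cdot\theta'_\ell$ with $\theta'_\ell$ independent. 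Relatedly, the conditional law of $\theta_\ell$ given $D_\ell$ does \emph{not} retain exponential moments uniformly in $m$: one has $\sfE[e^{s\theta}\mid D]=\sfE[e^{s\theta}\Ind{D}]/p_m$, and $p_m\to0$. The fix is straightforward: work directly with the i.i.d.\ variables $Z_\ell\DF\theta_\ell\Ind{\tilkappa^\ell_0\ge 3m+5}\ge\theta_\ell\Ind{D_\ell}$. For each fixed $s$ with $\sfE e^{s\theta_1}<\infty$ one has, by dominated convergence, $\sfE e^{sZ_1}=1-\sfP(\tilkappa_0\ge 3m+5)+\sfE\bl[e^{s\theta_1}\Ind{\tilkappa_0\ge 3m+5}\br]\to1$ as $m\to\infty$, and a standard Chernoff bound on $\sum_{\ell\le N_t}Z_\ell$ (combined with your control of $N_t$) gives the exponential decay. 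Your boundary-term argument for $t-\tiltau_{\ell_0}$ is fine.

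For comparison, the paper takes a different route in exactly this place. Rather than bounding the cycle \emph{durations} on discrepancy cycles, it first controls the total number of \emph{jumps} there, $\calK^m_t=\sum_\ell\tilkappa^\ell_0\Ind{\tilkappa^\ell_0\ge 3m+5}$, via the abstract ``long-jump density'' estimate of Appendix~\ref{sec:long-jump-density} (Corollary~\ref{cor:SAK-LDP}), which says precisely that such truncated sums have vanishing velocity as the cutoff grows. It then converts jump counts to durations by noting that every holding time is stochastically bounded by an $\Exp(\nu)$ variable with $\nu=\min(1,\lambda^-+\mu)$, so $\calJ_m(t)$ is dominated by $\sum_{j\le\calK^m_t}\eta_j$ with i.i.d.\ $\eta_j\sim\Exp(\nu)$; the boundary cycle is absorbed by taking $\sharp_t=\min\{\ell:\tiltau_\ell\ge t\}$. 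This two-stage decoupling cleanly separates the $m$-dependence (in $\calK^m_t$) from an $m$-independent LDP for exponentials, whereas your direct Chernoff argument is shorter but requires handling the $\theta$--$D$ dependence correctly as above.
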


In view of the trivial bound $\calJ_m(t)\le t$, the Borel-Cantelli lemma implies that for every fixed $\varepsilon>0$ we have, with probability one,
\[
 0\le\bspip-\bshatpi^m_+\le\limsup_{t\to\infty}\frac1t\calJ_m(t)\le2\varepsilon
\]
if only $m\ge m_\varepsilon$. It thus remains to verify the claim of the lemma.

Let an arbitrary $\varepsilon>0$ be fixed. We shall use the following three facts:

\begin{description}
 \item [{\sf Fact~I}]
Denote $\sharp_t\DF\min\bl\{\ell\ge0:\tiltau_\ell\ge t\br\}$. Since the differences $\tiltau_{\ell+1}-\tiltau_\ell$, $\ell\ge0$, are {\sf iid} random variables with the same distribution as $\tiltau_1$, Theorem~\ref{thm:main-renewal} implies that for every $\zeta>0$ there exist positive $A_1$ and $a_1$ such that
\[
 \sfP\Bl(\,\Bl|\sharp_t-\frac t{\sfE\tiltau_1}\Br|\ge\zeta t\,\Br)\le A_1e^{-a_1t}\qquad\text{ for all $t\ge0$\,.}
\]

 \item [{\sf Fact~II}]
For $\ell=1,\dots,\sharp_t$, let $\tilkappa^\ell_0$ be the total number of transitions of the jump chain during $\ell$th cycle, ie, for $t\in[\,\tiltau_{\ell-1},\tiltau_\ell\,)$. By the discussion above, if the discrepancy event occurs during $\ell$th cycle, we necessarily have $\tilkappa^\ell_0\ge 3m+5$. Denote
\[
\calK^m_t\DF\sum_{\ell=1}^{\sharp_t}\,\tilkappa^\ell_0\,\Ind{\tilkappa^\ell_0\ge 3m+5}\,.
\]
By Corollary~\ref{cor:SAK-LDP}, for every $\zeta_2>0$ small enough there exist $\zeta'_2\in(0,\zeta_2)$, $m\ge0$, $A_2>0$ and $a_2>0$ such that
\[
\sfP\bl(\,\calK^m_t\notin(\,\zeta'_2t,\zeta_2t\,)\,\br)\le A_2e^{-a_2t}\qquad\text{ for all $t\ge0$\,.}
\]

 \item [{\sf Fact~III}]
During every cycle, each holding time is exponentially distributed with parameter not smaller than $\nu=\min(\,1,\lambda^-+\mu\,)>0$. As a result, duration of every single cycle of $\kappa$ jumps is stochastically dominated by the sum of $\kappa$ {\sf iid} $\Exp(\nu)$ random variables.

Notice also that if $\eta_j\sim\Exp(\nu)$, $j=1,\dots, k$, are {\sf iid} random variables then, by the classical LDP, for every $\zeta_3>0$ there exist $A_3>0$ and $a_3>0$ such that for all $\kappa\ge0$,
\[
 \sfP\Bl(\,\sum_{j=1}^\kappa\eta_j\ge\Bl(\frac1\nu+\zeta_3\Br)\kappa\,\Br)\le A_3e^{-a_3\kappa}\,.
\]
\end{description}

Combining these observations, we deduce that $\calJ_m(t)$ from \eqref{eq:Jmt-integral} is stochastically smaller than $\sum_{j=1}^{\calK^m_t}\,\eta_j$, with $\eta_j\sim\Exp(\nu)$, $j\ge1$, being {\sf iid} random variables. Taking $\zeta_2=\varepsilon\nu/3$ and $\zeta_3=1/(2\nu)$, we deduce that for some $m\ge0$, $A>0$ and~$a>0$,
\[
 \sfP\Bl(\,\sum_{j=1}^{\calK^m_t}\,\eta_j\ge\frac\varepsilon2t\,\Br)\le Ae^{-a\zeta'_2t}\,,
\]
ie, the result of Lemma~\ref{lem:Jmt-lower-bound} holds. Consequently, $\lim\limits_{m\to\infty}\bshatpi^m_+=\bspip$, as claimed.

\section{Properties of the lifetimes}\label{sec:lifetime-properties}

\subsection{Proof of Lemma~\ref{lem:Laplace-of-lifetime}}

By the Markov property, the lifetime $\Top$ of the extreme $\oplus$~monomer at the origin can be rewritten as (recall \eqref{eq:T1-def})
\[
 \Top\equiv\min\Bl\{t>0:\bsy_t=(-1,\varnothing)\mid\bsy_0=(0,\oplus)\Br\}\,.
\]
Similarly, the lifetime $\Tom$ of the extreme $\ominus$~monomer satisfies
\[
 \Tom\equiv\min\Bl\{t>0:\bsy_t=(-1,\varnothing)\mid\bsy_0=(0,\varnothing)\Br\}\,.
\]
For $s\ge0$, consider the Laplace transforms of these times, $\phiop(s)\DF\sfE e^{-s\Top}$ and $\phiom(s)\DF\sfE e^{-s\Tom}$.

Suppose the process $\bsy_t$ starts from $\bsy_0=(0,\varnothing)$. After an exponential holding time $\eta_0\sim\Exp(\mu+\lambda^-)$, the extreme $\ominus$~monomer either departs from the system or a $\oplus$~monomer attaches to it, thus increasing the total lifetime by $\Top'+\Tom'$, where $\Top'$ and $\Tom'$ are independent and have the same distributions as $\Top$ and $\Tom$ respectively. As a result, the strong Markov property implies
\begin{equation}
\label{eq:phiom}
\phiom(s)\equiv\sfE\bl(e^{-s\eta_0}\br)\,\Bl[\frac\mu{\mu+\lambda^-}+\frac{\lambda^-}{\mu+\lambda^-}\,\phiom(s)\,\phiop(s)\Br]\,.
\end{equation}

Similarly, after a holding time $\eta_1\sim\Exp(1+\lambda^+)$, the initial configuration $\bsy_0=(0,\oplus)$ either becomes $(0,\varnothing)$ or $(1,\oplus\oplus)$. In the second case, after a time $\Top''\sim\Top$ the process $\bsy_t$ arrives either in $(0,\varnothing)$ or in $(0,\oplus)$, depending on whether the $\oplus$~monomer initially at the origin hydrolyses by time $\Top''$ or not. Consequently, if $\Topop$ denotes the lifetime of the head $\oplus\oplus$, we get
\[
\begin{split}
\sfE\Bl[e^{-s\Topop}\mid\Top''\Br]
&=e^{-s\Top''}\Bl[e^{-\Top''}\,\phiop(s)+\bl(1-e^{-\Top''}\br)\,\phiom(s)\Br]
\\[.5ex]
&=e^{-(s+1)\Top''}\bl(\phiop(s)-\phiom(s)\br)+e^{-s\Top''}\phiom(s)\,,
\end{split}
\]
and as a result,
\[
 \sfE e^{-s\Topop}=\phiop(s+1)\bl(\phiop(s)-\phiom(s)\br)+\phiop(s)\phiom(s)\,.
\]
Combining this with the first-step decomposition at time $\eta_1$,
\[
 \phiop(s)=\sfE e^{-s\eta_1}
\Bl[\frac1{1+\lambda^+}\phiom(s)+\frac{\lambda^+}{1+\lambda^+}\sfE e^{-s\Topop}\Br]
\]
we obtain
\begin{equation}
\label{eq:phiop}
\phiop(s)=\frac{\sfE e^{-s\eta_1}}{1+\lambda^+}\Bl[\phiom(s)
+\lambda^+\phiop(s+1)\bl(\phiop(s)-\phiom(s)\br)+\lambda^+\phiop(s)\phiom(s)\Br]\,.
\end{equation}

Finally, recalling that for $\eta\sim\Exp(\rho)$ we have $\sfE e^{-s\eta}=\rho/(\rho+s)$, we rewrite \eqref{eq:phiom} and \eqref{eq:phiop} as
\[
\left\{
\begin{aligned}
 (\mu+\lambda^-+s)\,\phiom(s)&=\mu+\lambda^-\phiop(s)\phiom(s)
\\[1ex]
(1+\lambda^++s)\,\phiop(s)&=\bl(1+\lambda^+\phiop(s)\br)\phiom(s)
+\lambda^+\bl(\phiop(s)-\phiom(s)\br)\phiop(s+1)\,.
\end{aligned}
\right.
\]
Getting rid of $\phiom(s)$, we deduce that $\phiop(s)$ satisfies \eqref{eq:phi-eqn}. This finishes the proof of Lemma~\ref{lem:Laplace-of-lifetime}. 

\medskip
Differentiating \eqref{eq:phiom}, equivalently, the first equation in the last display, we immediately deduce the following fact.

\begin{cor}\label{cor:Etop-Etom}\sl
For all positive $\mu$, $\lambda^+$ and $\lambda^-$, we have  $1+\mu\,\sfE\Tom=\lambda^-\,\sfE\Top$; in particular, both $\sfE\Tom$ and $\sfE\Top$ are finite or infinite simultaneously.
\end{cor}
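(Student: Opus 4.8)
The plan is to differentiate at $s=0$ the exact functional identity for $\phiom$ produced in the proof of \Lem{Laplace-of-lifetime} — the first line of the last display there, $(\mu+\lambda^-+s)\,\phiom(s)=\mu+\lambda^-\,\phiop(s)\,\phiom(s)$, valid for every $s\ge0$. (Probabilistically this is just the first-step decomposition $\Tom=\eta_0+B\,(\Top'+\Tom')$, with $\eta_0\sim\Exp(\mu+\lambda^-)$, $B$ an independent Bernoulli variable of success probability $\lambda^-/(\mu+\lambda^-)$, and $\Top',\Tom'$ independent copies of $\Top,\Tom$; so one could equally well take expectations there.) Recall that $\phiop(s)=\sfE e^{-s\Top}$ and $\phiom(s)=\sfE e^{-s\Tom}$ are completely monotone, hence smooth, on $(0,\infty)$; by monotone convergence, as $s\downarrow0$ one has $\phiop(s)\uparrow\sfP(\Top<\infty)$, $\phiom(s)\uparrow\sfP(\Tom<\infty)$, $-\phiop'(s)=\sfE\bl[\Top\,e^{-s\Top}\br]\uparrow\sfE\Top$ and $-\phiom'(s)\uparrow\sfE\Tom$, the last two limits taken in $[0,+\infty]$.

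The one delicate point is to pin down the boundary values $\phiop(0^+)$ and $\phiom(0^+)$. If $\sfE\Top=\sfE\Tom=+\infty$ there is nothing to prove: the asserted identity then reduces to an equality between two infinite quantities and the ``simultaneously finite or infinite'' claim holds a fortiori. So assume that one of the expectations — say $\sfE\Top$, the case of $\sfE\Tom$ being symmetric — is finite. Then $\Top<\infty$ almost surely, so $\phiop(0^+)=1$, and inserting this into the functional identity at $s=0^+$ gives $(\mu+\lambda^-)\,\phiom(0^+)=\mu+\lambda^-\,\phiom(0^+)$, whence $\phiom(0^+)=1$ as well, i.e.\ $\Tom<\infty$ almost surely (equivalently, in the decomposition above the ``attachment'' branch terminates with probability one). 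Thus, in the only case that matters, both Laplace transforms equal $1$ at the origin.

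It remains to differentiate. Differentiating $(\mu+\lambda^-+s)\,\phiom(s)=\mu+\lambda^-\,\phiop(s)\,\phiom(s)$ and then using the undifferentiated identity to replace the coefficient $\mu+\lambda^-+s-\lambda^-\phiop(s)$ of $\phiom'(s)$ by $\mu/\phiom(s)$ yields, for $s>0$, the explicit relation $-\mu\,\phiom'(s)=\phiom(s)^2\,\bl(1-\lambda^-\,\phiop'(s)\br)$; this detour through the original identity is what avoids an indeterminate $\infty-\infty$ when one of the means is infinite. Letting $s\downarrow0$ and inserting $\phiop(0^+)=\phiom(0^+)=1$, $\phiop'(0^+)=-\sfE\Top$, $\phiom'(0^+)=-\sfE\Tom$ gives the affine relation between $\sfE\Tom$ and $\sfE\Top$ asserted by the corollary (and, incidentally, $\sfE\Tom<\infty$ in the case at hand); since that relation has strictly positive coefficients, the two expectations are finite or infinite together, which is the remaining claim. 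I expect the genuine obstacle to be exactly the endpoint behaviour at $s=0$ — identifying $\phiom'(0^+)$ with $-\sfE\Tom$ while allowing the value $+\infty$, and establishing honesty of the lifetimes in the relevant regime — both of which follow from monotone convergence together with the short argument of the previous paragraph; given those, the differentiation itself is immediate.
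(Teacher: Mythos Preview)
Your approach is exactly the paper's --- differentiate the identity $(\mu+\lambda^-+s)\,\phiom(s)=\mu+\lambda^-\phiop(s)\phiom(s)$ at $s=0$ --- only you supply the endpoint justifications (monotone convergence, honesty of the lifetimes) that the paper leaves implicit. One remark: your rearranged relation $-\mu\,\phiom'(s)=\phiom(s)^2\bl(1-\lambda^-\phiop'(s)\br)$ actually yields $\mu\,\sfE\Tom=1+\lambda^-\,\sfE\Top$, with $\mu$ and $\lambda^-$ swapped relative to the displayed formula; this appears to be a typo in the statement (set $\lambda^-=0$ to see which version is right), and in any case the ``simultaneously finite or infinite'' conclusion is unaffected.
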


\begin{rem}\label{rem:Top-Tom-dominates}\rm
Our argument above implies that the lifetime $\Top$ stochastically dominates $\Tom$, i.e., $\sfP\bl(\Top>t\br)\ge\sfP\bl(\Tom>t\br)$ for all $t\ge0$.
\end{rem}

\subsection{Proof of Theorem~\ref{thm:lifetime-vs-transience}}

Our aim here is to verify the following fact.

\begin{prop}\label{prop:Top-exp-moments}\sl
 Let $\Top$ be the lifetime of the extreme $\oplus$~monomer, and let $v$ be the velocity of the process $x_t$ as described in Corollary~\ref{cor:velocity}. Then $v<0$ if and only if $\sfE\Top<\infty$. Moreover, if $v<0$, then $\Top$ has exponential moments in a neighbourhood of the origin.
\end{prop}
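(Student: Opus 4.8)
The plan is to establish the two implications $\sfE\Top<\infty\Rightarrow v<0$ and $v<0\Rightarrow\Top$ has exponential moments near~$0$; together with $\Cor{Etop-Etom}$ (which makes $\sfE\Top$ and $\sfE\Tom$ simultaneously finite) and the formula $v=\sfE\tilx_1/\sfE\tiltau_1$ of $\Cor{velocity}$, these give the proposition. Both implications rest on two renewal decompositions that I would record first. Since a $\oplus$~monomer can be created only at the active end and can never lie to the left of another $\oplus$, if the process starts from an empty-head (equivalently, all-$\ominus$) configuration at position~$p$, then no $\oplus$~monomer appears at a position $\le p-1$ before $x_t$ first reaches $p-1$; hence that first passage ends again in an empty-head state, and by the strong Markov property and spatial homogeneity successive one-step descents cost i.i.d.\ times distributed as~$\Tom$, each ending in an empty-head state. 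Consequently, from $(0,\varnothing)$ one renewal cycle ends at $(\tilx_1,\varnothing)$ with $\tilx_1\in\{-1\}\cup\{1,2,\dots\}$ (either a single detachment, or an excursion with non-empty head during which $x_t\ge1$), after which $\tilx_1+1\ge0$ further descents reach $(-1,\varnothing)$, so $\Tom=\tiltau_1+\sum_{i=1}^{\tilx_1+1}\Tom^{(i)}$ with $\Tom^{(i)}$ i.i.d.\ copies of $\Tom$ independent of the first cycle. Likewise, from $(0,\oplus)$ the head stays non-empty (and $x_t\ge0$) until the time $\bartau_1$ of $\Cor{barkappa0-exp-moments}$ at which all $\oplus$'s have hydrolysed, leaving $(Q,\varnothing)$ with $0\le Q\le\barkappa_1$, after which $Q+1$ descents yield
\[
 \Top\ \stackrel{d}{=}\ \bartau_1+\sum_{i=1}^{Q+1}\Tom^{(i)}\,,
\]
the $\Tom^{(i)}$ being i.i.d.\ copies of $\Tom$ independent of everything generated before time~$\bartau_1$.

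For $\sfE\Top<\infty\Rightarrow v<0$: by $\Cor{Etop-Etom}$ also $\sfE\Tom<\infty$, so taking expectations in $\Tom=\tiltau_1+\sum_{i=1}^{\tilx_1+1}\Tom^{(i)}$ (Wald's identity applies: the number of summands depends only on the first cycle, the summands are independent of it, and both have finite mean by $\Thm{main-renewal}$) gives $\sfE\Tom=\sfE\tiltau_1+\sfE\Tom\cdot\sfE(\tilx_1+1)$. Since $0<\sfE\Tom<\infty$ and $\sfE\tiltau_1>0$, this forces $-\sfE\tilx_1=\sfE\tiltau_1/\sfE\Tom>0$, i.e.\ $\sfE\tilx_1<0$, hence $v<0$ by $\Cor{velocity}$.

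For $v<0\Rightarrow\Top$ has exponential moments: now $\sfE\tilx_1<0$, and by $\Thm{main-renewal}$ the i.i.d.\ increments of $(\tilx_l)$ have exponential moments, negative mean, and are bounded below by~$-1$ (skip-free to the left). Hence $L\DF\min\{l:\tilx_l=-1\}$ is a.s.\ finite and, by Cramér's bound, $\sfP(L>l)\le\sfP(\tilx_l\ge0)\le e^{-cl}$ for some $c>0$. Since $\Tom\le\tiltau_L=\sum_{l=1}^L\theta_l$ with $\theta_l\DF\tiltau_l-\tiltau_{l-1}$ having exponential moments, the Cauchy--Schwarz bound $\sfE e^{s\tiltau_L}\le\sum_{l\ge1}\sfP(L\ge l)^{1/2}\bl(\sfE e^{2s\theta_1}\br)^{l/2}$, combined with the geometric decay of $\sfP(L\ge l)$ and $\sfE e^{2s\theta_1}\to1$ as $s\downarrow0$, shows $h(s)\DF\sfE e^{s\Tom}<\infty$ for all sufficiently small $s>0$; note $h$ is increasing with $h(0^+)=1$. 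Finally, conditioning on the pre-$\bartau_1$ history in the displayed decomposition of $\Top$ and using $Q+1\le\barkappa_1+1$ and $h(s)\ge1$,
\[
 \sfE e^{s\Top}=\sfE\Bl[e^{s\bartau_1}\,h(s)^{Q+1}\Br]\le h(s)\,\sfE\Bl[e^{s\bartau_1}\,h(s)^{\barkappa_1}\Br]\,,
\]
which is finite once $s\le\bars$ and $h(s)\le\barz$ (both hold for $s$ small, since $h(0^+)=1<\barz$), by $\Cor{barkappa0-exp-moments}$. Thus $\Top$ has exponential moments in a neighbourhood of the origin — and a fortiori $\Tom$, which is stochastically dominated by $\Top$, recall $\Rem{Top-Tom-dominates}$ — in particular $\sfE\Top<\infty$, closing the equivalence.

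The step I expect to be hardest is the bookkeeping in the last display: it fuses three separate sources of randomness — the excursion length $\bartau_1$, the random number $Q+1$ of ensuing one-step descents, and the i.i.d.\ descent times $\Tom^{(i)}$ — into a single exponential bound, and the delicate point is that the joint exponential control of $(\barkappa_1,\bartau_1)$ from $\Cor{barkappa0-exp-moments}$ is just strong enough to absorb the factor $h(s)^{\barkappa_1}$, which works precisely because $h(s)\to1$ as $s\downarrow0$. A subsidiary but indispensable point is the structural claim that downward first passages from empty-head states are genuine i.i.d.\ renewals ending again in empty-head states; this is where one uses that hydrolysis only removes $\oplus$~monomers and attachment only adds one at the active end, so that no $\oplus$~monomer can ever sit to the left of a level that $x_t$ has already vacated.
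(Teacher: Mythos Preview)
Your proof is correct. Both implications are established rigorously, and the structural observations (that $\tilx_1\in\{-1\}\cup\{1,2,\dots\}$, that first passages to a lower level from an empty-head state end again with empty head, and that $Q\le\barkappa_1$) are justified by the fact that $\oplus$~monomers are only created at the active end and hence cannot appear to the left of a level already vacated by~$x_t$.

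Your route, however, differs from the paper's in both halves. For the forward implication the paper runs the Strong Law along the subsequence of hitting times of levels $-k$, obtaining $v=-1/\sfE\Tom$ in the limit; you instead write the exact renewal equation $\Tom=\tiltau_1+\sum_{i=1}^{\tilx_1+1}\Tom^{(i)}$ and take expectations via Wald, which yields the same identity $\sfE\tilx_1=-\sfE\tiltau_1/\sfE\Tom$ algebraically. For the backward implication the paper bounds $\sfP(\Top>Kn)$ directly: it counts how many empty-head renewals occur by time $Kn$, uses the LDP to force $x_{\tau^*_n}<0$ with high probability, and observes that $\{\Top>Kn\}$ forces $x_{\tau^*_n}\ge0$. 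Your argument is two-stage: first you show $\sfE e^{s\Tom}<\infty$ for small $s>0$ by controlling the hitting index $L$ of level $-1$ for the skip-free-left walk $(\tilx_l)$ and applying Cauchy--Schwarz to $\sfE e^{s\tiltau_L}$; then you bootstrap to $\Top$ through the explicit decomposition $\Top=\bartau_1+\sum_{i=1}^{Q+1}\Tom^{(i)}$, absorbing the random number of summands into the joint exponential control of $(\barkappa_1,\bartau_1)$. Your approach is more algebraic and makes the renewal structure fully explicit; the paper's is a more direct tail estimate that avoids the intermediate bootstrap from $\Tom$ to $\Top$. Both rely on the same three ingredients: the empty-head renewal structure of \Thm{main-renewal}, the downward skip-free property of $(\tilx_l)$, and \Cor{barkappa0-exp-moments}.

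Two minor points worth tightening: (i) your statement ``$\Tom\le\tiltau_L$'' is in fact an equality (the first time $x_t=-1$ necessarily has empty head, as you argue), though the inequality suffices; (ii) the phrase ``both have finite mean by \Thm{main-renewal}'' is slightly imprecise---finiteness of $\sfE(\tilx_1+1)$ comes from \Thm{main-renewal}, whereas $\sfE\Tom<\infty$ is the hypothesis (via \Cor{Etop-Etom}).
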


Of course, Theorem~\ref{thm:lifetime-vs-transience} follows directly from Corollary~\ref{cor:velocity} and Proposition~\ref{prop:Top-exp-moments}.

\begin{proof}
Let first $\sfE\Top<\infty$ and let the process $\bsy_t=(x_t,\bsw_t)$, $t\ge0$, start from $\bsy_0=(0,\varnothing)$. To deduce that $v<0$, consider a sequence of stopping times $S_0=0$, $S_k=\min\bl\{t>S_{k-1}:x_t=-k\br\}$, $k\ge1$. Of course, $\bl\{S_k\br\}$ is just a renewal sequence whose increments $S_k-S_{k-1}$ are independent and share the same distribution as~$\Tom$.

Consider the sub-walk $\tilde{\tilx}_k\DF\tilx_{S_k}$ of the random walk $\tilx_\ell$ corresponding to the consecutive moments when the head $\bsw_t$ becomes empty, recall~\eqref{eq:tily-def}. As in Sect.~\ref{sec:proof-main-renewal}, the strong law of large numbers implies that, with probability one, as $k\to\infty$, we have $\tilde\tilx_k/S_k\to-1/\sfE\Tom$. Combining this with Corollary~\ref{cor:velocity}, we deduce that $v=-\frac1{\sfE\Tom}<0$, and observe that by Corollary~\ref{cor:Etop-Etom} the condition $\sfE\Tom<\infty$ is equivalent to $\sfE\Top<\infty$.

We next assume that $v<0$ and deduce existence of exponential moments for $\Top$ in a neighbourhood of the origin. To this end, it is sufficient to verify the following property:

{\sl
 For every $v<0$ there exist positive constants $K$, $A$, and $a$ such that 
\begin{equation}
\label{eq:Top-tail-bound}
\sfP(\Top>Kn)\le Ae^{-an}\qquad\text{ for all $n\ge1$\,.}
\end{equation}
}
Indeed, for every $\alpha\in(0,a/K)$ the bound \eqref{eq:Top-tail-bound} implies that
\[
 \sfE e^{\alpha\Top}\le\alpha e^{\alpha K}\sum_{n=0}^\infty e^{\alpha Kn}\sfP(\Top>Kn)\le\frac{A\alpha e^{\alpha K}}{1-e^{\alpha K-a}}<\infty\,.
\]

It thus remains to derive property \eqref{eq:Top-tail-bound}. We begin by considering the random walk $\bs\tily_l=(\tilx_l,\bstilw_l)$ starting from $\bs\tily_0=(0,\varnothing)$ as in \eqref{eq:tily-def}. By Theorem~\ref{thm:main-renewal} and Corollary~\ref{cor:velocity}, for every $\zeta_1>0$ the large deviation probability
$\sfP\bl(\tilx_n>(v+\zeta_1)n\br)$ is exponentially small as $n\to\infty$. In particular, for $\zeta_1=|v|/2$ there exist positive constants $A_1$ and $a_1$ such that $\sfP\bl(\tilx_n>vn/2\bl)\le A_1e^{-a_1n}$ for all $n\ge1$.

Assume that the process $\bsy_t$ starts from $\bsy_0=(0,\oplus)$. Consider the collection $\taust_l$, $l\ge0$, of consecutive moments of time when $\bsy_t$ enters states with empty head, i.e., $\bsw_t=\varnothing$. Clearly, all variables $\taust_0>0$, $\taust_1-\taust_0$, $\taust_2-\taust_1$, \dots, are independent and have exponential moments in a neighbourhood of the origin; moreover, all but the first one share the common distribution with the stopping time $\tiltau_1$ from \eqref{eq:tily-def}. We denote $L_n\equiv\max\bl\{l\ge0:\taust_l\le Kn\br\}$ (where $L_n=-\infty$ if $\taust_0>Kn$), and introduce the event $\calB_n^1\equiv\bl\{L_n\ge\frac{2K}{\sfE\tiltau_1}n\br\}$. By the usual Large Deviation Principle estimate (similar to {\sf Fact~I} in Sect.~\ref{sec:convergence-of-pimplus}), the complement $\overline{\calB_n^1}$ of $\calB_n^1$ is exponentially small: for every $K>0$ there exist positive constants $A_2$ and $a_2$ such that
\[
 \sfP\bl(\,\overline{\calB_n^1}\,\br)\equiv\sfP\Bl(L_n<\frac{2K}{\sfE\tiltau_1}n\Br)\le A_2e^{-a_2n}
\]
for all $n\ge1$. To simplify the notations, we put $K=\sfE\tiltau_1/2$ and assume that the constants $A_2$ and $a_2$ are compatible with this choice. On the event $\calB_n^1$ we now have $L_n\ge n$, equivalently, $\taust_n\le Kn=n\sfE\tau_1/2$.

Let $x^*_0\equiv x_{\taust_0}$ be the position of the end of the microtubule at the first moment $\taust_0>0$ when the head $\bsw_t$ vanishes (recall that $\bsw_0=\oplus$). By Corollary~\ref{cor:barkappa0-exp-moments}, $x^*_0$ has exponential moments in a neighbourhood of the origin, so that for every $\zeta_3>0$ there exist positive $A_3$ and $a_3$ such that $\sfP\bl(x^*_0>\zeta_3 n\br)\le A_3e^{-a_3n}$ for all $n\ge1$.

We finally observe that on the event $\calB_n^1$ we have $\bl\{\Top>Kn\br\}\subseteq\bl\{x_{\taust_n}\ge0\br\}$, so that using the Markov property at the moment $\taust_0$, we obtain
\[
 \sfP\bl(x_{\taust_n}\ge0\br)=\sum_{k\ge0}\sfP\bl(x_{\taust_0}=k\br)\,\sfP\bl(x_{\taust_n}-x_{\taust_0}\ge-k\br)\,.
\]
Now, taking $\zeta=\min(\zeta_1,\zeta_3)$, we can bound the RHS above by
\[
\sum_{k=0}^{\zeta n}\sfP\bl(x_{\taust_0}=k\br) \,\sfP\bl(x_{\taust_n}-x_{\taust_0}\ge-\zeta n\br)+\sfP\bl(x_{\taust_0}>\zeta n\br)\le A_4e^{-a_4n}\,,
\]
where $A_4=A_1+A_3>0$ and $a_4=\min(a_1,a_3)>0$.

Putting all these estimates together, we get
\[
 \sfP\bl(\Top>Kn\br)\le\sfP\bl(\,\overline{\calB_n^1}\,\br)+\sfP\bl(\Top>Kn\mid\calB_n^1\br)\le\sfP\bl(\,\overline{\calB_n^1}\,\br)+\sfP\bl(x_{\taust_n}\ge0\br)
\le Ae^{-an}
\]
for all $n\ge1$, where $A=A_2+A_4>0$ and $a=\min(a_2,a_4)>0$. This finishes our proof of \eqref{eq:Top-tail-bound} and that of Proposition~\ref{prop:Top-exp-moments}.
\end{proof}

\appendix
\section{Regularity of birth and death processes}\label{sec:regularity-BandD-processes}

For fixed $\lambda>0$ and $\mu>0$, consider a continuous time birth and death process $Y_t$, $t\ge0$, whose birth rate is $\lambda$ and death rate per individual is $\mu$. In other words, $Y_t$ is a Markov process on $\BbbZ^+=\{0,1,2,\dots\}$, such that every jump from state $k\ge0$ to $k+1$ has rate $\lambda$, and jumps from $k>0$ to $k-1$ have rate $k\mu$. Let $\tau_0$ be the hitting time and let $\kappa_0$ be the total number of jumps until the Markov chain $Y_t$ hits the origin. For $z\ge0$ and $s\in\BbbR$ consider the function
\[
 \overline{\psi}_m(z,s)\DF\sfE_m\bl[z^{\kappa_0}e^{s\tau_0}\br]\,,
\]
where as usual $\sfE_m$ stands for the conditional expectation corresponding to the initial state $X_0=m>0$. Our aim here is to verify the following result:

\begin{prop}\label{prop:regularity-B&D-process}\sl
Let an integer $M$ satisfy $M\mu>\lambda$. Then there exists $\barz>1$ and $\bars>0$ such that $\max_{m=1,\dots,M}\overline{\psi}_m(z,s)$ is finite provided $z\le\barz$ and $s\le\bars$.
\end{prop}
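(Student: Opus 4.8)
The plan is to integrate out the exponential holding times, which reduces the claim to an exponential-moment bound for the number $\kappa_0$ of embedded jumps, and then to control that bound by a renewal decomposition exploiting the uniform downward drift of the embedded chain above level $M$. \emph{Step 1 (reduction to the jump chain).} Let $(Z_n)_{n\ge0}$ be the embedded jump chain of $Y_t$: from a state $k\ge1$ it moves to $k+1$ with probability $p_k\DF\lambda/(\lambda+k\mu)$ and to $k-1$ with probability $q_k\DF k\mu/(\lambda+k\mu)$, state $0$ being absorbing. Conditionally on $(Z_n)_{0\le n\le\kappa_0}$ the successive holding times are independent, the $n$th one being $\Exp(\lambda+Z_n\mu)$, so for $s$ in a right-neighbourhood of the origin and any $m$,
\[
 \overline{\psi}_m(z,s)=\sfE_m\Bl[z^{\kappa_0}\prod_{n=0}^{\kappa_0-1}\frac{\lambda+Z_n\mu}{\lambda+Z_n\mu-s}\Br]\le\sfE_m\bl[(zC(s))^{\kappa_0}\br]\,,\qquad C(s)\DF\frac{\lambda+\mu}{\lambda+\mu-s}\,,
\]
since $Z_n\ge1$ for $n<\kappa_0$ and $r\mapsto r/(r-s)$ is decreasing. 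As $C(s)\downarrow1$ when $s\downarrow0$, it suffices to produce $\tilde z_0>1$ with $\sfE_m[\tilde z^{\kappa_0}]<\infty$ for every $\tilde z<\tilde z_0$ and every $m\in\{1,\dots,M\}$; one then chooses $\barz>1$ and $\bars>0$ so small that $\barz\,C(\bars)<\tilde z_0$.

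\emph{Step 2 (descent estimate above $M$).} This is the only place the hypothesis is used: $M\mu>\lambda$ gives $p_k\le p_M<\tfrac12<q_M\le q_k$ for every $k\ge M$. Starting $(Z_n)$ at $M$, let $N$ be the number of steps until it first reaches level $M-1$. A coupling with the homogeneous nearest-neighbour walk that jumps up with probability $p_M$ at every site (each step decided by a common uniform variable, so that the inhomogeneous chain moves up only when the homogeneous one does) shows that $N$ is stochastically dominated by the corresponding one-level descent time of that subcritical walk. The latter has the classical generating function $\bl(1-\sqrt{1-4p_Mq_M\tilde z^2}\br)/(2p_M\tilde z)$, which is finite and continuous, with value $1$ at $\tilde z=1$, on the set $\{\tilde z<(4p_Mq_M)^{-1/2}\}$, and $(4p_Mq_M)^{-1/2}>1$ because $p_M\ne q_M$. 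Hence $\sfE_M[\tilde z^N]<\infty$ for all $\tilde z$ in a neighbourhood of $1$.

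\emph{Step 3 (finite region and assembly).} For $m\in\{1,\dots,M-1\}$ let $\sigma$ be the number of steps until $(Z_n)$ started at $m$ first hits $\{0\}\cup\{M\}$; this is the absorption time of an irreducible chain on the finite set $\{1,\dots,M-1\}$, so $\sfE_m[\tilde z^\sigma]<\infty$ for $\tilde z$ near $1$, and $r\DF\sfP_{M-1}(\text{hit }0\text{ before }M)>0$. Writing $\Psi\DF\sfE_{M-1}[\tilde z^{\kappa_0}]$ and applying the strong Markov property first at the hitting time of $\{0\}\cup\{M\}$ and then, if $M$ was hit, at the subsequent return to $M-1$ (which costs $N$ extra steps, by Step~2), one obtains the renewal identity
\[
 \Psi=\sfE_{M-1}\bl[\tilde z^\sigma\Ind{\text{hit }0}\br]+\sfE_{M-1}\bl[\tilde z^\sigma\Ind{\text{hit }M}\br]\,\sfE_M\bl[\tilde z^N\br]\,\Psi\,.
\]
The coefficient of $\Psi$ equals $1-r<1$ at $\tilde z=1$, hence stays below $1$ for $\tilde z$ near $1$, so $\Psi<\infty$ there, and then $\sfE_m[\tilde z^{\kappa_0}]\le\sfE_m[\tilde z^\sigma]+\sfE_m[\tilde z^\sigma\Ind{\text{hit }M}]\,\sfE_M[\tilde z^N]\,\Psi<\infty$ for every $m\in\{1,\dots,M\}$ (read $\sigma\equiv0$ and $\sfE_M[\tilde z^{\kappa_0}]=\sfE_M[\tilde z^N]\,\Psi$ when $m=M$). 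When $M=1$ the set $\{1,\dots,M-1\}$ is empty and one argues directly: from state $1$ the chain is absorbed in one step with probability $q_1>\tfrac12$, or moves to $2$ and returns to $1$ after a descent as in Step~2, giving $\sfE_1[\tilde z^{\kappa_0}]=\tilde z q_1+\tilde z p_1\,\sfE_2[\tilde z^{N'}]\,\sfE_1[\tilde z^{\kappa_0}]$ with a coefficient $<1$ near $\tilde z=1$.

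The main obstacle is Step~2: converting the uniform downward drift above level $M$ into a genuine exponential moment for the descent time, and in particular checking that increasing the bias at higher levels cannot slow the descent down. The coupling above (or, equivalently, a supermartingale of the form $\theta^{Z_{n\wedge N}}$ for a suitably small $\theta>1$) handles this, while the remaining ingredients are routine facts about finite absorbing chains and about geometric sums of random variables with finite exponential moments near the origin.
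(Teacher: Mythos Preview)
Your proof is correct and follows essentially the same route as the paper: a renewal decomposition at the level where the drift becomes uniformly negative, a coupling of the excursions above that level to a homogeneous subcritical walk (your Step~2 is the paper's Lemmas~A.3 and~A.4), and a finite-state absorption argument below (your Step~3 is the paper's Lemma~A.2), assembled via a geometric series. The one organizational difference is that you integrate out the holding times at the outset (Step~1), reducing everything to an exponential-moment bound for the embedded jump count $\kappa_0$, whereas the paper carries both variables $(z,s)$ throughout its lemmas and only performs the analogous holding-time bound inside the proof of Lemma~A.2; this is a mild simplification but not a different idea.
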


Our proof of Proposition~\ref{prop:regularity-B&D-process} in Sect.~\ref{sec:BDP} shall be based upon two auxiliary results for finite state Markov chains (Sect.~\ref{sec:MC}) and random walks with negative drift (Sect.~\ref{sec:RW}).

\subsection{Finite state Markov chains}\label{sec:MC}

For a fixed integer $M>1$ put
\begin{equation}
\label{eq:SM-dSM-def}
\calS_M=\bl\{1,2,\dots,M\br\}\,,\qquad\partial\calS_M=\bl\{0,M+1\br\}\,,
\end{equation}
and let strictly positive numbers $p_m$, $q_m$, $\rho_m$ with $m\in\calS_M$ satisfy $p_m+q_m=1$ for all $m\in\calS_M$. Let $X_t$ be the continuous time random walk on $\barcalS_M=\calS_M\cup\partial\calS_M$ whose generator $\calQ=\bl(Q_{ij}\br)_{i,j=0}^{M+1}$ has the following entries:
\[
 Q_{ij}=\begin{cases} p_m\rho_m\,,& i=m,\quad j=m+1\,,\\q_m\rho_m\,,& i=m,\quad j=m-1\,,\\-\rho_m\,,& i=m,\quad j=m\,,\end{cases}
\qquad  {}^\forall m\in\calS_M\,,
\]
and $Q_{ij}=0$ for all other $i$, $j\in\barcalS_M$. In other words, $X_t$ is a continuous time Markov chain on $\barcalS_M$ with absorbing boundary $\partial\calS_M$, such that upon arrival at state $m\in\calS_M$ the chain waits a random time $\xi_m\sim\Exp(\rho_m)$ and afterwards jumps to $m+1$ or $m-1$ with probabilities $p_m$ and $q_m$ respectively. For $b\in\partial\calS_M$, let $\tau_b$ be the hitting time and let $\kappa_b$ be the total number of steps until the chain $X_t$ reaches state~$b$. For real $s$ and non-negative $z$, consider the functions
\begin{equation}
\label{eq:phim-0-M+1-def}
\begin{gathered}
 \varphi^0_m(z,s)\DF\sfE_m\bl[z^{\kappa_0}e^{s\tau_0}\Ind{\tau_0<\tau_{M+1}}\br]\,,
\\[1ex]
 \varphi^{M+1}_m(z,s)\DF\sfE_m\bl[z^{\kappa_{M+1}}e^{s\tau_{M+1}}\Ind{\tau_{M+1}<\tau_0}\br]\,,
\end{gathered}
\end{equation}
where as before $\sfE_m(\cdot)$ denotes the conditional expectation corresponding to the initial state $X_0=m\in\calS_M$. Clearly, the quantities
\[
 \varphi^0_m(1,0)\equiv\sfP_m\bl(\tau_0<\tau_{M+1}\br)\qquad\text{ and }\qquad\varphi^{M+1}_m(1,0)\equiv\sfP_m\bl(\tau_{M+1}<\tau_0\br)
\]
are both positive and add up to~$1$. Our aim here is to verify the following claim:

\begin{lem}\label{lem:general-finite-state-MC}\sl
 There exist $z_0>1$ and $s_0>0$ such that for $|z|\le z_0$ and $s\le s_0$,
\[
 \max_{m\in\calS_M}\Bl\{\varphi^0_m(z,s)\,,\varphi^{M+1}_m(z,s)\Br\}<1\,.
\]
\end{lem}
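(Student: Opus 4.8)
The plan is to regard the $2M$ functions $\varphi^0_m(z,s)$, $\varphi^{M+1}_m(z,s)$, $m\in\calS_M$, as finitely many continuous (indeed real-analytic) functions of $(z,s)$ near $(1,0)$, to observe that at the base point each of them lies strictly between $0$ and $1$, and then to conclude by continuity. So the whole argument reduces to two ingredients: (i)~finiteness and continuity of these functions on a real neighbourhood of $(1,0)$, and (ii)~the strict two-sided bound at $(1,0)$ itself.

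First I would establish a uniform absorption estimate. Since $\barcalS_M$ is finite and every $m\in\calS_M$ leads to $\partial\calS_M$, there are $N\in\BbbN$ and $\varepsilon>0$ with $\sfP_m(\kappa\le N)\ge\varepsilon$ for all $m\in\calS_M$, where $\kappa\DF\min(\kappa_0,\kappa_{M+1})$ is the number of jumps before absorption; iterating the Markov property gives $\sfP_m(\kappa>kN)\le(1-\varepsilon)^k$. With $\rho\DF\min_{m\in\calS_M}\rho_m>0$, the absorption time $\tau\DF\min(\tau_0,\tau_{M+1})$ is stochastically dominated by a sum of $\kappa$ i.i.d.\ $\Exp(\rho)$ holding times, so combining with the geometric tail of $\kappa$ one obtains constants $C,c>0$, independent of $m$, with $\sfP_m(\kappa>n)\le Ce^{-cn}$ and $\sfP_m(\tau>t)\le Ce^{-ct}$. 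Consequently there are $z_1>1$ and $s_1>0$ with $\sup_{m\in\calS_M}\sfE_m\bl[z_1^{\kappa}e^{s_1\tau}\br]<\infty$; since on $\{\tau_0<\tau_{M+1}\}$ one has $\kappa_0=\kappa$ and $\tau_0=\tau$ (and symmetrically for the other boundary), each $\varphi^0_m(z,s)$, $\varphi^{M+1}_m(z,s)$ is then finite for $|z|\le z_1$, $|s|\le s_1$, and dominated convergence with dominating function $z_1^{\kappa}e^{s_1\tau}$ shows that these functions are jointly continuous in $(z,s)$ on that neighbourhood. Equivalently, one may use the first-step recursion $\varphi^0_m=\tfrac{\rho_m z}{\rho_m-s}\bl(p_m\varphi^0_{m+1}+q_m\varphi^0_{m-1}\br)$ with $\varphi^0_0\equiv1$, $\varphi^0_{M+1}\equiv0$, i.e.\ $(\mathrm{I}-A(z,s))\bs\varphi^0=\bs b(z,s)$; at $(1,0)$ the matrix $A(1,0)$ is the substochastic restriction of the jump matrix to the irreducible block $\calS_M$, hence has spectral radius $<1$, and the claim follows from continuity of $(\mathrm{I}-A(z,s))^{-1}$ together with a truncation argument identifying the probabilistic quantity with the unique solution of the linear system.

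At the base point, $\varphi^0_m(1,0)=\sfP_m(\tau_0<\tau_{M+1})$ and $\varphi^{M+1}_m(1,0)=\sfP_m(\tau_{M+1}<\tau_0)$ are strictly positive and add up to~$1$, so each of these $2M$ numbers is $\le 1-\delta$ for some $\delta>0$. By the continuity established above there exist $z_0\in(1,z_1]$ and $s_0\in(0,s_1]$ such that $\varphi^0_m(z,s)<1$ and $\varphi^{M+1}_m(z,s)<1$ for all $m\in\calS_M$ whenever $1\le z\le z_0$ and $0\le s\le s_0$. The remaining values $|z|\le z_0$ are handled by domination: for $0\le z\le 1$ one has $z^{\kappa_0}\le1$, hence $\varphi^0_m(z,s)\le\varphi^0_m(1,s)$, which is $<1$ after possibly shrinking $s_0$ (continuity at $(1,0)$); for $-z_0\le z<0$ one has $|\varphi^0_m(z,s)|\le\varphi^0_m(|z|,s)<1$; and symmetrically for $\varphi^{M+1}_m$. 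Taking the minimum of the finitely many thresholds so obtained yields the asserted $z_0>1$ and $s_0>0$.

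The one genuinely delicate point is ingredient (i): the uniform exponential integrability of $(\kappa,\tau)$ under absorption and the resulting finiteness and continuity of the generating functions near $(1,0)$, together with the verification that the probabilistic definition of $\varphi^0_m$ really does coincide with the unique finite solution of the first-step linear system on that neighbourhood (which is where one uses monotone convergence on the truncations $\sfE_m[z^{\kappa_0}e^{s\tau_0}\Ind{\tau_0<\tau_{M+1},\,\kappa_0\le N}]$). Once that is in place, the conclusion is merely the elementary fact that finitely many continuous functions, each strictly below $1$ at a common point, stay below $1$ on a common neighbourhood of that point.
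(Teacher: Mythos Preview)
Your proof is correct and follows essentially the same route as the paper: establish uniform exponential tails for the number of jumps and the absorption time, deduce finiteness and joint continuity of the $2M$ generating functions in a neighbourhood of $(1,0)$, and then invoke continuity together with the strict inequality $\varphi^0_m(1,0),\varphi^{M+1}_m(1,0)<1$. Your use of the single absorption pair $(\kappa,\tau)=(\min(\kappa_0,\kappa_{M+1}),\min(\tau_0,\tau_{M+1}))$ is in fact slightly cleaner than the paper's separate handling of $\kappa_0$ and $\kappa_{M+1}$, but the argument is otherwise the same.
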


\begin{proof}
We start by observing that for $\xi\sim\Exp(\rho)$ and $s<\rho$ the exponential moment $\sfE e^{s\xi}$ of $\xi$ satisfies $\sfE e^{s\xi}=\rho/(\rho-s)$ with the RHS being a decreasing function of $\rho>0$. This implies that every holding time $\xi_m$ satisfies
\[
 \sfE e^{s\xi_m}\le\frac{\bar\rho}{\bar\rho-s}<\infty\qquad\text{ if only }\qquad s<\bar\rho\DF\min_m\rho_m>0\,.
\]

Next, for every fixed trajectory $X_t$ with $k$ jumps, where $k<\min(\kappa_0,\kappa_{M+1})$, its time duration is a sum of independent holding times at all visited states, so that the exponential moment of the total time duration of this trajectory is bounded above by $\bar\rho^k/(\bar\rho-s)^k$. Consequently, for every $m\in\calS_M$
\[
 \varphi^0_m(z,s)\le\sfE_m\Bl[\Bl(\frac{z\bar\rho}{\bar\rho-s}\Br)^{\kappa_0}\Br]\,,\qquad
 \varphi^{M+1}_m(z,s)\le\sfE_m\Bl[\Bl(\frac{z\bar\rho}{\bar\rho-s}\Br)^{\kappa_{M+1}}\Br]\,.
\]
We now observe that in view of the estimate (cf.~\cite[Lemma~10.11]{dW91})
\[
 \min_m\sfP_m\bl(\kappa_0\le M\br)\ge\bar p\DF\bl(\min_m(p_m,q_m)\br)^M>0
\]
the stopping time $\kappa_0$ has exponential tails, $\max_m\sfP_m\bl(\kappa_0>nM\br)\le(1-\bar p)^n$; since a similar estimate holds for $\kappa_{M+1}$, we deduce that $\max_m\Bl\{\sfE_m\bl[\bar z^{\,\kappa_0}\br],\sfE_m\bl[\bar z^{\,\kappa_{M+1}}\br]\Br\}$ is finite for some $\bar z>1$. Therefore, the estimate
\begin{equation}
\label{eq:varphi-0M-finite}
 \max_m\Bl\{\varphi^0_m(z,s), \varphi^{M+1}_m(z,s)\Br\}<\infty
\end{equation}
holds for all $s\le s'$ and $|z|\le z'$ with $s'>0$ and $z'\in(1,\bar z)$ satisfying the condition $\bar\rho z'/(\bar\rho-s')\le\bar z$, equivalently, $s'\le\bar\rho\bl(1-z'/\bar z\br)$. Since all functions in the LHS of \eqref{eq:varphi-0M-finite} are continuous for $z$ and $s$ in the region under consideration, and
\[
\max_m\Bl(\varphi^0_m(1,0),\varphi^{M+1}_m(1,0)\Br)\equiv\max_m\Bl(\sfP_m\bl(\tau_0<\tau_{M+1}\br),\sfP_m\bl(\tau_{M+1}<\tau_0\br)\Br)<1\,,
\]
the claim of the lemma follows.
\end{proof}

\subsection{Random walks with negative drift}\label{sec:RW}

For fixed $\lambda>0$ and $\nu>0$, let $X_t$ be the continuous-time homogeneous random walk on the half-line $\BbbZ^+=\bl\{0,1,2,\dots\br\}$ with absorption at the origin, whose jumps from state $k>0$ to $k+1$ have rate $\lambda$ and those from $k>0$ to $k-1$ have rate $\nu$. Let $\tau_0$ be the hitting time and let $\kappa_0$ be the total number of jumps until the Markov chain $X_t$ hits the origin. For $z\ge0$  and $s\in\BbbR$ consider the functions
\begin{equation}
\label{eq:psi-m-RW-negative-drift}
\psi_m(z,s)\DF\sfE_m\bl[z^{\kappa_0}e^{s\tau_0}\br]\,,\qquad m\in\BbbN\,.
\end{equation}
Our aim here is to verify the following claim:

\begin{lem}\label{lem:RW-negative-drift}\sl
 Let $\nu>\lambda$; if $s'>0$ and $z'>1$ are such that
\begin{equation}
\label{eq:negative-drift-RW-stability}
s'+2(z'-1)\sqrt{\lambda\nu}<\bl(\sqrt\nu-\sqrt\lambda\br)^2\,,
\end{equation}
then $\psi_1(z,s)<\infty$ for all $z\le z'$ and $s\le s'$.
\end{lem}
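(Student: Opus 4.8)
The plan is to exhibit an explicit nonnegative exponential supermartingale, killed at the origin, whose exponential rate is tuned so that the supermartingale property becomes \emph{precisely} the hypothesis \eqref{eq:negative-drift-RW-stability}. First I would rewrite \eqref{eq:negative-drift-RW-stability} in the equivalent form $s'+2z'\sqrt{\lambda\nu}<\lambda+\nu$, which in particular forces $\lambda+\nu-s'>2z'\sqrt{\lambda\nu}>0$. I then fix $r\DF\sqrt{\nu/\lambda}$ — the natural Doob weight for a nearest-neighbour walk with up-rate $\lambda$ and down-rate $\nu$ — and, for $z\le z'$ and $s\le s'$, introduce
\[
 M_t\DF z^{N_t}\,e^{st}\,r^{X_t}\,,
\]
where $N_t$ is the number of jumps performed by $X$ in $[0,t]$, so that $N_{\tau_0}=\kappa_0$.

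The key step is a generator computation. Since $N_t$ is stochastically dominated by a Poisson process of rate $\lambda+\nu$ and $X_t\le m+N_t$ under $\sfE_m$, one has $M_t\le e^{st}r^m(zr)^{N_t}$, so $M_t$ is integrable for every fixed $t$. Writing $\mathcal{L}$ for the (extended) generator of the space--time--jump-count process $(t,N_t,X_t)$ and $g(t,n,k)=e^{st}z^n r^k$, a direct calculation using that from each state $k\ge1$ the chain goes up at rate $\lambda$ and down at rate $\nu$ gives, for $k\ge1$,
\[
 \mathcal{L}g(t,n,k)=e^{st}z^n r^k\bl[s+\lambda(zr-1)+\nu(z r^{-1}-1)\br]=e^{st}z^n r^k\bl[s+2z\sqrt{\lambda\nu}-\lambda-\nu\br]\,,
\]
the second equality using $r=\sqrt{\nu/\lambda}$. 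By the choice of $r$ and the rewritten form of \eqref{eq:negative-drift-RW-stability}, the bracket is strictly negative whenever $z\le z'$, $s\le s'$; hence, combining $\mathcal{L}g\le0$ on $\{k\ge1\}$ with the integrability just noted (Dynkin's formula), the stopped process $\bl(M_{t\wedge\tau_0}\br)_{t\ge0}$ is a nonnegative supermartingale.

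It then remains to extract the bound. From $\sfE_m[M_{t\wedge\tau_0}]\le M_0=r^m$, the identity $M_{\tau_0}=z^{\kappa_0}e^{s\tau_0}r^{0}=z^{\kappa_0}e^{s\tau_0}$ on $\{\tau_0\le t\}$ together with $M_t\ge0$ on $\{\tau_0>t\}$ gives $\sfE_m\bl[z^{\kappa_0}e^{s\tau_0}\Ind{\tau_0\le t}\br]\le r^m$. Because $\nu>\lambda$, the embedded walk has down-probability $\nu/(\lambda+\nu)>1/2$ and is therefore absorbed at the origin in finitely many steps almost surely, so $\tau_0<\infty$ a.s.; letting $t\to\infty$ by monotone convergence yields $\psi_m(z,s)\le r^m<\infty$ for every $m\ge1$, and in particular $\psi_1(z,s)\le r=\sqrt{\nu/\lambda}<\infty$, which is the assertion of the lemma.

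The only delicate points — and the nearest thing to an obstacle — are the a priori integrability of $M_t$ and the passage from the infinitesimal inequality $\mathcal{L}g\le0$ to a genuine (rather than local) supermartingale; both are routine once the Poisson domination of $N_t$ is invoked. I would also remark that the same identity, read as the quadratic $\lambda z\,u^2-(\lambda+\nu-s)u+\nu z=0$ for $u=\psi_1(z,s)$ obtained by a one-step decomposition at the first jump from state $1$ (together with $\psi_2=\psi_1^{\,2}$ by spatial homogeneity and the strong Markov property), makes transparent why \eqref{eq:negative-drift-RW-stability} — precisely the condition that this quadratic have real roots and $\lambda+\nu-s>0$ — is the sharp threshold for the method.
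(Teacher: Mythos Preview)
Your argument is correct and complete: the choice $r=\sqrt{\nu/\lambda}$ makes the bracket $s+\lambda(zr-1)+\nu(zr^{-1}-1)=s+2z\sqrt{\lambda\nu}-(\lambda+\nu)$ strictly negative precisely under the rewritten hypothesis, the Poisson domination of $N_t$ gives the a~priori integrability you need for Dynkin, and the optional-stopping bound $\psi_m(z,s)\le r^m$ follows cleanly.

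This is, however, a genuinely different route from the paper's. The paper proceeds by the one-step decomposition you sketch in your closing remark: from state~$1$ one gets $(\lambda+\nu-s)\psi_1=z\bl(\lambda\psi_2+\nu\br)$, and the multiplicativity $\psi_m=\psi_1^{\,m}$ (strong Markov plus spatial homogeneity) turns this into the quadratic $\lambda\psi^2-a\psi+\nu=0$ with $a=(\lambda+\nu-s)/z$, whose smallest positive root is finite exactly when $a^2\ge4\lambda\nu$, i.e.\ when \eqref{eq:negative-drift-RW-stability} holds. The paper's approach thus yields the exact value of $\psi_1$ and makes the sharpness of the threshold transparent. Your Lyapunov/supermartingale method trades the explicit formula for a uniform bound $\psi_m\le(\nu/\lambda)^{m/2}$ that is not sharp away from the threshold (it equals the geometric mean of the two roots, since their product is $\nu/\lambda$), but it is more robust: it requires no homogeneity and would extend immediately to state-dependent down-rates $\nu_k\ge\nu$, which is in fact the situation of the birth-and-death process in the surrounding section.
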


\begin{rem}\sl
Notice that if $\nu>\lambda$ (i.e., $X_t$ has negative drift), then for all $m\in\BbbN$ we have $\sfP_m\bl(\tau_0<\infty\br)=1$, and the lemma implies that $\psi_1(z,s)\searrow1$ as $z\searrow1$ and $s\searrow0$.
\end{rem}

Our proof below is a straightforward adaptation of the standard argument for the discrete-time walks (see, eg., \cite[Sect.~1.4]{jrN97}). We notice, however, that an alternative proof of Lemma~\ref{lem:RW-negative-drift} can be obtained by computing $\psi_m(z,s)$ explicitly. Namely, by conditioning on the jump chain, we deduce (similarly to the argument in Sect.~\ref{sec:RW})
\[
 \psi_m(z,s)=\sfE_m\Bl[\Bl(\frac{(\lambda+\nu)z}{\lambda+\nu-s}\Br)^{\kappa_0}\Br]
=\bigg(\sfE_1\Bl[\Bl(\frac{(\lambda+\nu)z}{\lambda+\nu-s}\Br)^{\kappa_0}\Br]\bigg)^m\,,
\]
so it remains to observe that the last expectation is finite iff $4\lambda\nu z^2\le(\lambda+\nu-s)^2$ (missing details behind the last two steps and the explicit expression for the generating function can be found in the classical monograph \cite[Sect.~14.4]{wF50}).

\begin{proof}
 Applying the Markov property at the moment of the first jump out of the initial state~$1$, we get
\[
 \psi_1(z,s)\equiv z\,\frac{\lambda+\nu}{\lambda+\nu-s}\Bl(\frac{\lambda}{\lambda+\nu}\,\psi_2(z,s)+\frac{\nu}{\lambda+\nu}\Br)\,.
\]
On the other hand, the strong Markov property implies that $\psi_m(z,s)\equiv\bl[\psi_1(z,s)\br]^m$, for all $m\in\BbbN$, so that $\psi_1(z,s)$ is given by the smallest positive solution $\psi$ to the quadratic equation $\lambda\psi^2+\nu=a\psi$ with $a=(\lambda+\nu-s)/z$. Such a solution exists and is finite iff $a^2\ge4\lambda\nu$, equivalently, if $\lambda+\nu-s\ge2z\sqrt{\lambda\nu}$; as $z>0$, the latter condition coincides with \eqref{eq:negative-drift-RW-stability}.
\end{proof}

\subsection{Proof of Proposition~\ref{prop:regularity-B&D-process}}\label{sec:BDP}

Our argument is based upon Lemmata~\ref{lem:general-finite-state-MC} and~\ref{lem:RW-negative-drift}, as well as on the following fact.

\begin{lem}\label{lem:BDP}\sl
Let $Y_t$, $t\ge0$, be the continuous time birth and death process with intensities $\lambda>0$ and $\mu>0$, as described above. Fix an integer $M>1$ such that $M\mu>\lambda$ and use $\hat\tau$ and $\hat\kappa$ to denote the hitting time and the total number of steps until the process $Y_t$ hits state $M$. Then there exist real numbers $\hat z>1$ and $\hat s>0$ such that the generating function
$
 \widehat\psi_M(z,s)\DF\sfE_{M+1}\bl[z^{\hat\kappa}e^{s\hat\tau}\br]
$
is finite
for all $s\le\hat s$ and $z\le\hat z$.
\end{lem}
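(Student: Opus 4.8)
The plan is to observe that, started from $M+1$, the process $Y_t$ cannot reach level $M$ without remaining in $\{M+1,M+2,\dots\}$ for the entire excursion, since the jumps are nearest-neighbour; hence $\hat\tau$ and $\hat\kappa$ are determined solely by the transition rates above level $M$. At any state $k\ge M+1$ the death rate $k\mu$ is at least $(M+1)\mu=:\nu$, and $\nu>M\mu>\lambda$, so the excursion is ``more negatively drifted'' than the homogeneous random walk $X_t$ on $\BbbZ^+$ with up-rate $\lambda$ and down-rate $\nu$ from Lemma~\ref{lem:RW-negative-drift} (identifying the $Y$-level $M+j$ with state $j$ of $X_t$, and level $M$ with the absorbing origin). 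The first step, therefore, is to make this comparison precise: I would build a monotone coupling of the shifted excursion $(Y_t-M)$, run until absorption at $0$, with $X_t$ started from $1$, so that the shifted $Y$-excursion lies weakly below $X_t$ at all times.

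Second, I would construct this coupling one jump at a time on the embedded chains. Writing $j$ and $k$ for the current states with $1\le j\le k$: when $j=k$, use a single uniform to decide the move, exploiting that the down-probability of the $Y$-excursion, $q_j^{(Y)}=(M+j)\mu/(\lambda+(M+j)\mu)$, is increasing in $j$ and hence $\ge\nu/(\lambda+\nu)=q^{(X)}$, so that ``$X$ moves down'' forces ``$Y$ moves down''; when $k=j+1$, couple instead through the up-probabilities, using $p_j^{(Y)}=\lambda/(\lambda+(M+j)\mu)\le\lambda/(\lambda+\nu)=p^{(X)}$, so that ``$Y$ moves up'' forces ``$X$ moves up''; and when $k\ge j+2$ the two chains may be run independently. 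A routine induction shows the ordering ($Y$-level $-\,M\le X$) is preserved, so $Y$ hits level $M$ no later, in number of jumps, than $X$ hits $0$; in particular $\hat\kappa\le\kappa_0$ almost surely. Simultaneously I would couple the holding times via common uniforms: at every step before absorption the $Y$-excursion sits at a level $j\ge1$ above $M$ with total jump rate $\lambda+(M+j)\mu\ge\lambda+\nu$, whereas $X_t$ always has rate $\lambda+\nu$, so each successive holding time of the $Y$-excursion can be taken pathwise $\le$ the corresponding holding time of $X_t$; combined with $\hat\kappa\le\kappa_0$ this yields $\hat\tau\le\tau_0$ almost surely.

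Finally, with the joint domination $(\hat\kappa,\hat\tau)\le(\kappa_0,\tau_0)$ in hand, for $1\le z\le z'$ and $0\le s\le s'$ I would estimate
\[
\widehat\psi_M(z,s)=\sfE_{M+1}\bl[z^{\hat\kappa}e^{s\hat\tau}\br]\le\sfE_1\bl[(z')^{\kappa_0}e^{s'\tau_0}\br]=\psi_1(z',s')\,,
\]
the function of Lemma~\ref{lem:RW-negative-drift} for the walk with parameters $\lambda$ and $\nu=(M+1)\mu$. Since $\nu>\lambda$, that lemma supplies $z'>1$ and $s'>0$ satisfying \eqref{eq:negative-drift-RW-stability} for which $\psi_1(z',s')<\infty$; taking $\hat z=z'$ and $\hat s=s'$ then finishes the proof.

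I expect the main obstacle to be the monotone coupling itself: one must handle the ``adjacent states'' case so that an up-move of the lower chain never breaks the ordering, and verify that the marginal laws of both the jump chains and the holding times are unaffected by the coupling choices. This is standard for birth-and-death chains but needs a careful statement. A secondary, purely bookkeeping point is to ensure the holding-time domination is pathwise along the same enumeration of jumps, so that it genuinely combines with $\hat\kappa\le\kappa_0$ to give $\hat\tau\le\tau_0$.
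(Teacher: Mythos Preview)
Your proposal is correct and follows essentially the same route as the paper: dominate the excursion of $Y_t$ above level $M$ by a homogeneous negatively-drifted nearest-neighbour walk and then invoke Lemma~\ref{lem:RW-negative-drift}. The paper simply takes $\nu=M\mu$ and appeals to a monotone (Harris) coupling in continuous time without spelling out the details, whereas you choose $\nu=(M+1)\mu$ and couple the embedded jump chains and holding times separately; both variants yield $(\hat\kappa,\hat\tau)\le(\kappa_0,\tau_0)$ and hence $\widehat\psi_M(z,s)\le\psi_1(z,s)$.
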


\begin{proof}
Let $X_t$, $t\ge0$, be the continuous time simple random walk on $\BbbZ$ with upwards rate $\lambda$ and downwards rate $\nu\equiv M\mu>\lambda$. Coupling $X_t$ and $Y_t$ starting from the common state
$X_0=Y_0=M+1$
in a monotone way (e.g., by using the Harris construction), we get
$
 \widehat\psi_M(z,s)\le\psi_1(z,s)\,,
$
where $\psi_1(\cdot,\cdot)$ is determined as in \eqref{eq:psi-m-RW-negative-drift} for the random walk~$X_t$. The result now follows from Lemma~\ref{lem:RW-negative-drift}.
\end{proof}

We turn now to the proof of Proposition~\ref{prop:regularity-B&D-process}. Let an integer $M$ be as in Lemma~\ref{lem:BDP}, namely, let $M$ satisfy the condition $M\mu>\lambda>0$. We also fix an initial state $m\in\calS_M$, recall \eqref{eq:SM-dSM-def}. It is convenient to re-sum the parts of the trajectories of $Y_t$ connecting states $M+1$ and $M$, thus transforming the birth-and-death process $(Y_t)_{t\ge0}$ into a continuous-time finite-state Markov chain with the state space $\calS_M$ (recall \eqref{eq:SM-dSM-def}). We shall split all trajectories contributing to
\[
 \overline{\psi}_m(z,s)\equiv\sfE_m\bl[z^{\kappa_0}e^{s\tau_0}\br]
\]
into groups $\bfB_\ell$ with an integer $\ell\ge0$ specifying the number of transitions from state $M+1\in\partial\calS_M$ to state $M\in\calS_M$ before the trajectory hits the absorbing state $0\in\partial\calS_M$, see Fig.~\ref{fig:BD-expansion}.

\begin{figure}[htp]
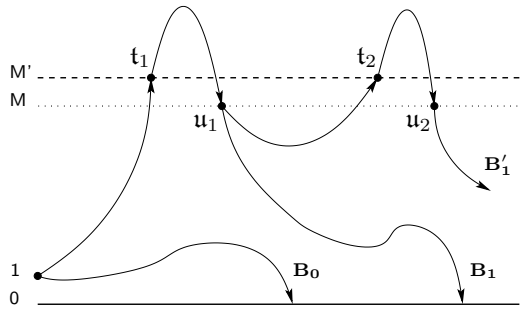

\vskip3ex
\psfrag{A}{{\scriptsize$\bf B_0$}}
\psfrag{B}{{\scriptsize$\bf B_1$}}
\psfrag{C}{{\scriptsize$\bf{{B}_1'}$}}
\psfrag{T1}{$\frt_1$}
\psfrag{T2}{$\frt_2$}
\psfrag{U1}{$\fru_1$}
\psfrag{U2}{$\fru_2$}
\psfrag{0}{{\scriptsize{\sf0}}}
\psfrag{1}{{\scriptsize{\sf1}}}
\psfrag{M}{{\scriptsize{\sf M}}}
\psfrag{M+1}{{\scriptsize{\sf M'}}}
\centerline{\InputPictureHeight{40mm}{expansion-sketch}}
\vskip3ex
\caption{Three groups of trajectories: $\bf B_0$ -- trajectories hitting state~$0$ without visiting state $M'=M+1$; $\bf B_1$ -- trajectories visiting state $M'$ exactly once before hitting state~$0$; $\bf{{B}_1'}$ -- trajectories visiting state $M'$ more than once.}\label{fig:BD-expansion}
\end{figure}

Of course, on $\bfB_0$ we have 
$
 \sfE_m\bl[z^{\kappa_0}e^{s\tau_0}\one_{\bf B_0}\br]\equiv\varphi^0_m(z,s)
$
(recall \eqref{eq:phim-0-M+1-def}),
with the RHS being finite in the region specified by Lemma~\ref{lem:general-finite-state-MC}. Otherwise, the trajectory in question visits state $M+1$ at least once and thus both stopping times $\frt_1$ and $\fru_1$,
\[
 \frt_1\DF\min\bl\{t>0:Y_t=M+1\br\}\,,\qquad \fru_1\DF\min\bl\{u>\frt_1:Y_u=M\br\}
\]
are well defined. By the strong Markov property,
\[
 \sfE_1\bl[z^{\kappa_0}e^{s\tau_0}\one_{\bf B_1}\br]\equiv\varphi^{M+1}_1(z,s)\widehat\psi_M(z,s)\varphi^0_M(z,s)\,.
\]

Similarly, defining stopping times $\frt_\ell$, $\fru_\ell$, $\ell>1$, via
\[
 \frt_\ell\DF\min\bl\{t>\fru_{\ell-1}:Y_t=M+1\br\}\,,\qquad \fru_\ell\DF\min\bl\{u>\frt_\ell:Y_u=M\br\}\,,
\]
we deduce that for $\ell>1$
\[
 \sfE_m\bl[z^{\kappa_0}e^{s\tau_0}\one_{\bf B_\ell}\br]\equiv\varphi^{M+1}_m(z,s)\widehat\psi_M(z,s)\Bl[\varphi^{M+1}_M(z,s)\widehat\psi_M(z,s)\Br]^{\ell-1}\varphi^0_M(z,s)\,.
\]
As a result,
\[
\begin{split}
 \overline{\psi}_m(z,s)&\equiv\sum_{\ell\ge0}\sfE_m\bl[z^{\kappa_0}e^{s\tau_0}\one_{\bf B_\ell}\br]=\varphi^0_m(z,s)
\\[.5ex]
&\hphantom{\equiv\sum_{\ell\ge0}\sfE_1\bl[}
+\sum_{\ell>0}\varphi^{M+1}_m(z,s)\,\widehat\psi_M(z,s)\Bl[\varphi^{M+1}_M(z,s)\,\widehat\psi_M(z,s)\Br]^{\ell-1}\varphi^0_M(z,s)
\\[.7ex]
&=\varphi^0_m(z,s)+\frac{\varphi^{M+1}_m(z,s)\widehat\psi_M(z,s)\varphi^0_M(z,s)}{1-\varphi^{M+1}_M(z,s)\,\widehat\psi_M(z,s)}\,,
\end{split}
\]
provided the last expression is finite.

Finally, the product $\varphi^{M+1}_M(z,s)\,\widehat\psi_M(z,s)$ is continuous in the region
\[
 |z|\le\tilde z\DF\min(z_0,\hat z)\,,\qquad s\le\tilde s\DF\min(s_0,\hat s)\,,
\]
where by Lemmata~\ref{lem:general-finite-state-MC} and~\ref{lem:RW-negative-drift} we have $\tilde z>1$ and $\tilde s>0$. Since
\[
 \varphi^{M+1}_M(1,0)\,\widehat\psi_M(1,0)\le\sfP_M\bl(\tau_{M+1}<\tau_0\br)<1\,,
\]
the claim of the proposition follows by continuity.

\section{Long jumps density estimate for a class of random walks}\label{sec:long-jump-density}

Our aim here is to derive a simple estimate for random walks whose jumps have exponential moments in a neighbourhood of the origin. This observation is at the heart of our argument in Sect.~\ref{sec:convergence-of-pimplus}, but is also of independent interest. Of course, the statement and the proof below can  be generalized to continuous distributions.

Let $X_j$, $j\ge1$, be a sequence of {\sf iid} random variables with values in $\BbbN=\{1,2,\dots\}$, whose common distribution has finite exponential moments in a neighbourhood of the origin, i.e., $\sfE\bl[\bars^X\br]<\infty$ for some $\bars>1$. For a fixed $K\in\BbbN$, we think of $X_1$, \dots, $X_K$ as jumps of a random walk in $\BbbZ_+$, and for $A>0$ put $S^A_K\DF\sum_{j=1}^K\,X_j\Ind{X_j>A}$, i.e., $S^A_K$ is the total length of jumps $X_j$, $1\le j\le K$, longer than~$A$. We then have the following result.

\begin{lem}\label{lem:SAK-upper-bound}\sl
For every $\varepsilon>0$ there exist $A_0>0$, $K_0>0$ and $\alpha>0$ such that the inequality $\sfP\bl(S^A_K>\varepsilon K\br)\le e^{-\alpha K}$ holds for all $A\ge A_0$ and $K\ge K_0$.
\end{lem}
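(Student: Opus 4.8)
The plan is to exploit the exponential moment hypothesis on the $X_j$ in two ways: first to control the total number of ``long'' jumps (those exceeding $A$) via a Chernoff bound on a Binomial variable whose success probability is exponentially small in $A$, and then to control the total length contributed by those long jumps via a change-of-measure/Cram\'er argument on the size-biased tail distribution. More precisely, write $p_A\DF\sfP(X_1>A)$; by Markov's inequality applied to $\bars^{X_1}$ we have $p_A\le \sfE[\bars^{X_1}]\,\bars^{-A}$, so $p_A\to0$ exponentially fast in $A$. Let $N_K\DF\sum_{j=1}^K\Ind{X_j>A}$; then $N_K$ is $\mathrm{Binomial}(K,p_A)$, and for any fixed $\delta>0$ a standard Chernoff bound gives $\sfP(N_K>\delta K)\le e^{-c(\delta,p_A)K}$ with $c(\delta,p_A)\to\infty$ as $p_A\to0$. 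So by choosing $A$ large (hence $p_A$ small) we can make $c$ as large as we like, and in particular the event $\{N_K>\delta K\}$ has probability at most $e^{-\alpha_1 K}$.

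The second and more delicate step is to bound $S^A_K$ on the complementary event $\{N_K\le\delta K\}$. On that event $S^A_K$ is a sum of at most $\delta K$ i.i.d.\ copies of $X_1$ conditioned on $\{X_1>A\}$; call the conditioned variable $\widetilde X_1$, with law $\sfP(\widetilde X_1=k)=\sfP(X_1=k)/p_A$ for $k>A$. The key point is that $\widetilde X_1$ still has a finite exponential moment: for $1<s<\bars$, $\sfE[s^{\widetilde X_1}]=\sfE[s^{X_1}\Ind{X_1>A}]/p_A\le \sfE[s^{X_1}]/p_A$, which is finite (though it may blow up as $A\to\infty$ for fixed $s$, so one must keep $s$ away from $\bars$ and track the bound carefully). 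A cleaner route is to bound $S^A_K$ directly: for $1<s\le s_0$ with $s_0\in(1,\bars)$ fixed,
\[
\sfP\bl(S^A_K>\varepsilon K\br)\le s^{-\varepsilon K}\,\sfE\bl[s^{S^A_K}\br]
= s^{-\varepsilon K}\,\Bl(\sfE\bl[s^{X_1\Ind{X_1>A}}\br]\Br)^K
= s^{-\varepsilon K}\,\Bl(1-p_A+\sfE\bl[s^{X_1}\Ind{X_1>A}\br]\Br)^K\,,
\]
using independence. Now $\sfE[s^{X_1}\Ind{X_1>A}]\to0$ as $A\to\infty$ by dominated convergence (dominated by the integrable $s_0^{X_1}$), so for $A$ large the base $1-p_A+\sfE[s^{X_1}\Ind{X_1>A}]$ is at most, say, $1+\tfrac12(s^\varepsilon-1)$ — strictly below $s^\varepsilon$ once we also note $s^\varepsilon>1$. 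This forces $s^{-\varepsilon K}\bl(1+\tfrac12(s^\varepsilon-1)\br)^K\le e^{-\alpha K}$ for a suitable $\alpha>0$ and all $K\ge1$, which actually handles the whole probability in one shot without even splitting on $N_K$.

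Given the one-line computation above, I would present the proof via this single Chernoff bound on $S^A_K$ and dispense with the two-step decomposition, keeping the Binomial remark only as intuition if at all. The \emph{main obstacle} is bookkeeping uniformity in $A$ and $K$: one must fix $s_0\in(1,\bars)$ and $\varepsilon>0$ \emph{first}, then choose $A_0$ large enough that $\sfE[s_0^{X_1}\Ind{X_1>A_0}]$ is smaller than $\tfrac12(s_0^\varepsilon-1)$ (which also dominates the $-p_A$ term harmlessly), and only then read off $\alpha=\varepsilon\log s_0-\log\bl(1+\tfrac12(s_0^\varepsilon-1)\br)>0$; the constant $K_0$ can be taken to be $1$. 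Care is needed because $\sfE[s^{X_1}\Ind{X_1>A}]$ is increasing in $s$, so one cannot let $s\uparrow\bars$; fixing $s=s_0$ once and for all avoids this. No genuinely hard analysis is involved — it is the standard exponential-Markov argument — so the only real work is arranging the quantifiers in the order $\varepsilon\mapsto s_0\mapsto A_0\mapsto\alpha$ so that the final bound is uniform in $A\ge A_0$ and $K\ge K_0$ as stated.
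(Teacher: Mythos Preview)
Your proposal is correct and, once you settle on the direct Chernoff bound, it is essentially the same argument as the paper's: the paper defines $\tilX_j\DF X_j\Ind{X_j>A}$, notes these are i.i.d.\ with finite exponential moments and with mean $\sfE\tilX\to0$ as $A\to\infty$ by dominated convergence, then invokes the standard LDP to conclude. Your explicit exponential-Markov computation is just an unpacking of that LDP step, with the dominated-convergence input applied to $\sfE[s^{X_1}\Ind{X_1>A}]$ rather than to the mean, which is an immaterial difference.
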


In view of the {\sf a priori} estimate $S^A_K\ge A\sum_{j=1}^K\Ind{X_j>A}$, the claim of the lemma and the standard LDP for Binomial random variables with parameters $K$ and $p_A=\sfP(X>A)$ imply the following observation:

\begin{cor}\label{cor:SAK-LDP}\sl
 For every $\varepsilon>0$ there exist $A_0>0$ and $\varepsilon_1\in(0,\varepsilon)$ such that the probability
\begin{equation}
\label{eq:SAK-LDP}
\sfP\bl(S^A_K\notin(\varepsilon_1K,\varepsilon K)\br)
\end{equation}
decays exponentially fast as $K\to\infty$.
\end{cor}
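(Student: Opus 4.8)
To establish Corollary~\ref{cor:SAK-LDP} it suffices, as already indicated in the excerpt, to prove Lemma~\ref{lem:SAK-upper-bound}: its upper tail bound combined with the \emph{a priori} estimate $S^A_K\ge A\sum_{j=1}^K\Ind{X_j>A}$ and the elementary large deviation lower bound for the variable $\sum_{j=1}^K\Ind{X_j>A}$, which is $\mathrm{Binomial}\bl(K,\sfP(X>A)\br)$, then gives the exponential decay of \eqref{eq:SAK-LDP}. So the plan concentrates on the lemma, whose proof is a truncated Chernoff bound: the contribution of the ``long'' jumps to a small exponential moment of $S^A_K$ is made negligible by raising the truncation level $A$, and this beats the factor $e^{\varepsilon K}$.

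First I would fix $t>0$ depending only on the common law of the $X_j$ (and \emph{not} on $\varepsilon$), small enough that $\sfE\bl[e^{tX}\br]<\infty$; since $\sfE\bl[\bars^X\br]<\infty$ for some $\bars>1$ by hypothesis, any $t\in\bl(0,\log\bars\br)$ works. Using the identity $e^{tx\Ind{x>A}}=\Ind{x\le A}+e^{tx}\Ind{x>A}$ and the independence of the $X_j$, one gets for all $A>0$, $K\ge1$
\[
\sfE\bl[e^{tS^A_K}\br]=\Bl(\sfE\bl[e^{tX\Ind{X>A}}\br]\Br)^K
=\bl(1-\sfP(X>A)+r_A\br)^K\le\bl(1+r_A\br)^K\,,
\]
where $r_A\DF\sfE\bl[e^{tX}\Ind{X>A}\br]\le\sfE\bl[e^{tX}\br]<\infty$.

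The key point is that $r_A\downarrow0$ as $A\to\infty$, which is immediate from dominated convergence since $e^{tX}\Ind{X>A}\le e^{tX}\in L^1$ and $e^{tX}\Ind{X>A}\to0$ pointwise. Hence, given $\varepsilon>0$, I would fix $A_0$ so large that $r_A\le t\varepsilon/2$ for all $A\ge A_0$. Markov's inequality applied to $e^{tS^A_K}$ together with $\log(1+r_A)\le r_A$ then yields, for all $A\ge A_0$ and all $K\ge1$,
\[
\sfP\bl(S^A_K>\varepsilon K\br)\le e^{-t\varepsilon K}\bl(1+r_A\br)^K
\le\exp\bl(-t\varepsilon K+Kr_A\br)\le\exp\Bl(-\tfrac{t\varepsilon}{2}\,K\Br)\,,
\]
so the lemma holds with $\alpha=t\varepsilon/2$ and $K_0=1$. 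Feeding this into the Binomial large deviation argument above produces Corollary~\ref{cor:SAK-LDP}.

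I do not expect a genuine obstacle here; this is a routine Cram\'er-type estimate. The one thing requiring care is the order of the quantifiers: $t$ must be chosen before $\varepsilon$, as a function of the law of $X$ alone (so that the exponential moment stays finite), and only then is $A_0$ chosen in terms of $\varepsilon$ and $t$ — otherwise the uniformity over $A\ge A_0$ and over $K$ asserted in the statement would be lost. Once this is respected, the tail estimate $r_A\to0$ does all the work.
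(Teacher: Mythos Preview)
Your argument is correct and follows essentially the same route as the paper: both reduce the corollary to Lemma~\ref{lem:SAK-upper-bound} via the binomial lower bound on $S^A_K$, and both prove the lemma by showing that the truncated variables $X\Ind{X>A}$ become negligible as $A\to\infty$. The only difference is cosmetic: the paper observes $\sfE\bl[X\Ind{X>A}\br]\to0$ and then invokes the standard Cram\'er LDP as a black box, whereas you control the exponential moment $r_A=\sfE\bl[e^{tX}\Ind{X>A}\br]\to0$ and write out the Chernoff bound explicitly, which makes the proof self-contained and yields $K_0=1$ rather than merely ``$K$ large enough''.
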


\begin{rem}\sl
 Of course, the very existence of two constants $0<\varepsilon_1<\varepsilon$ in \eqref{eq:SAK-LDP} is a straightforward consequence of the LDP. The main result in the lemma and the corollary above is that the velocity of the random walk $S^A_K$, $K\ge0$, vanishes asymptotically as $A\to\infty$.
\end{rem}

\begin{proof}[Proof of Lemma~\ref{lem:SAK-upper-bound}]
For a fixed $A>0$, put $\tilX_j\DF X_j\Ind{X_j>A}$. The integer-valued random variables $\tilX_j\ge0$ are {\sf iid} and satisfy, for $\bars>1$ as above, $\sfE\bl[\bars^{\tilX}\br]\le\sfE\bl[\bars^X\br]<\infty$. Moreover, by dominated convergence, $\sfE\tilX\equiv\sfE\bl[X\Ind{X>A}\br]\to0$ as $A\to\infty$, and we fix $A>0$ such that $\sfE\tilX<\varepsilon/2$. By the standard LDP, there exists $\tilde\alpha>0$ such that
\[
 \sfP\bl(S^A_K\ge(\sfE\tilX+\varepsilon/2)K\br)\le e^{-\tilde\alpha K}\qquad \text{for all $K$ large enough.}
\]
\end{proof}

\end{document}